\newcommand\bcmdtab{\noindent\bgroup\tabcolsep=0pt%
  \begin{tabular}{@{}p{10pc}@{}p{20pc}@{}}}
\newcommand\ecmdtab{\end{tabular}\egroup}
\newcommand*{\normally}{\mathrel{\ooalign{$|$\hfil\cr\kern+1pt$\thicksim$}}} 
\newcommand*{\nnormally}{\mathrel{\ooalign{$|$\hfil\cr\kern+1pt$\thicksim$}\negthickspace \negthickspace /} } 
\def\I{\mathcal{I}}
\def\K{\mathcal{K}}
\def\F{\mathcal{F}}
\def\P{\mathcal{P}}
\def\Systemsigma{\mathfrak{S}}
\newtheorem{theorem}{Theorem}
\newtheorem{proposition}{Proposition}
\newtheorem{algorithm}{Algorithm}
\newtheorem{remark}{Remark}
\newtheorem{definition}{Definition}
\newtheorem{example}{Example}
\newcommand{\no}[1]{\widebar{#1}}
\journal{}
\begin{document}

\begin{frontmatter}



\title{Probability propagation rules  for  Aristotelian  syllogisms}


\author[inst1]{Niki Pfeifer\corref{cor1}}

\affiliation[inst1]{organization={Department of Philosophy, University of Regensburg},
            city={Regensburg},
            country={Germany}}
\ead{niki.pfeifer@ur.de}
\author[inst2]{Giuseppe Sanfilippo\corref{cor1}}
\affiliation[inst2]{organization={Department of Mathematics and Computer Science, University of Palermo},
            city={Palermo},
            country={Italy}}

\ead{giuseppe.sanfilippo@unipa.it}

\cortext[cor1]{Shared first authorship (both authors contributed equally to this work).}

\begin{abstract}
We present a coherence-based probability semantics
and probability propagation rules 
for (categorical) Aristotelian syllogisms.
For framing the Aristotelian syllogisms as  probabilistic 
inferences,  we interpret basic syllogistic sentence types A, E, I, O by suitable  precise and imprecise conditional probability assessments. 
 Then, we define validity of  probabilistic inferences and  probabilistic notions of the existential import which is required, for the validity of the syllogisms. 
 Based on a generalization of de Finetti's fundamental theorem  to conditional probability, 
we investigate the coherent probability propagation rules of  argument forms of the syllogistic Figures I, II, and III, respectively. 
These results allow to show, 
for all three figures,  that each traditionally valid syllogism is also valid in our coherence-based probability semantics. 
Moreover, 
we interpret the basic syllogistic sentence types by suitable defaults and negated defaults. Thereby,
we build a  bridge from our probability semantics of Aristotelian syllogisms to nonmonotonic reasoning. Then we show that \emph{reductio} by conversion does not work while  \emph{reductio ad impossibile} can be applied in our approach.  
Finally, we  show how the proposed probability propagation rules can be used to analyze syllogisms involving generalized quantifiers (like \emph{Most}).
\end{abstract}

\begin{highlights}
\item Combining logic and coherence-based probability theory
\item Coherence-based probability semantics for Aristotelian syllogisms
\item New notions of validity of probabilistic syllogisms and probabilistic existential import assumptions
\item Probability propagation rules for categorical syllogisms of Figure I, II, and III
\item Applications and bridges  to nonmonotonic reasoning
\item Analysis of the Aristotelian methods of proof by conversion and \emph{reductio ad impossibile}
\item Generalized syllogisms involving intermediate quantifiers by suitable instantiations of probability propagation rules

\end{highlights}

\begin{keyword}  
Conditional events \sep coherence 
\sep  imprecise probability  semantics\sep
 probabilistic syllogistics \sep
nonmonotonic reasoning \sep generalized quantifiers.
\MSC[2020] 
03A05 \sep
03B42 \sep
03B48 \sep
03C80 \sep 
60-08 \sep
60A05 

\end{keyword}

\end{frontmatter}


\section{Motivation and outline.}
\label{SECT:Motivation}
There is a long tradition in logic to investigate categorical syllogisms that goes back to Aristotle's  \emph{Analytica Priora}.  
However, not many  authors proposed \emph{probability}  semantics for categorical syllogisms  (see, e.g., 
\cite{
	AmDP91,Amarger91,boole54b,chater99,cohen99,demorgan47,dubois93,hailperin96,gilio16,lambert1764,thierry11}) to overcome formal restrictions imposed by logic, like its \emph{monotonicity} (i.e., the inability to retract conclusions in the light of new evidence) or its qualitative nature (i.e., the inability to express \emph{degrees of belief}).  
In particular, \emph{universally} and \emph{existentially quantified} statements are hardly ever used in commonsense contexts: even if people mention words like ``all'' or ``every'',  they usually don't mean \emph{all} in the modern sense of the universal quantifier $\forall$. Indeed, universal quantified statements are usually not falsified  by one exception in everyday life. Likewise, people mostly don't mean by ``some'' \emph{at least one} in the sense of the existential quantifier $\exists$. Our aim is  to provide a richer and  more flexible framework for managing quantified statements in common sense reasoning. Specifically,  our probabilistic approach is scalable in the sense that the proposed semantics allows for managing not only traditional logical quantifiers but also the much bigger superset of \emph{generalized} or \emph{intermediate quantifiers}
(see, e.g., \cite{barwise81,peters06,peterson00,
westerstahl89}).   Such a framework will also be useful as a rationality framework for the psychology of reasoning, which has a long tradition in investigating syllogisms (see, e.g., \cite{storring08,khemlani12,pfeifer05a}). 
Finally, a further aim within our probabilistic approach is to build a  bridge from ancient syllogisms to relatively recent approaches in  nonmonotonic reasoning. 

Among various approaches to  probability,  we use the  subjective  interpretation.  
Specifically, we use the theory of subjective probability   based on the  coherence principle  of Bruno de Finetti (see, e.g., \cite{definetti31,definetti74}).
This coherence principle has been investigated  by many authors  and it has been generalized  to the conditional probability and to imprecise probability
(see, e.g., \cite{BeRR98,biazzo02,biazzo05,CaGV03,CaLS07,coletti02,Gili90,Gili96,gilio13ins,gilio16,Holz85,lad96,PeVa17,Rega85,WaPV04}).
The  coherence principle  plays a key role in probabilistic reasoning.  Coherence is a flexible approach as
it allows  to 
assign conditional probability \emph{directly} on an arbitrary family of conditional events---without requiring algebraic structures---and to propagate  coherent probabilities to   further conditional events. 
Moreover, 
coherence  is  more general than approaches which require positive probability for the conditioning events.  In such   approaches the  conditional probability $p(E|H)$ is defined by the ratio $\frac{p(E\wedge H)}{p(H)}$, which requires  positive probability of the conditioning event,  $p(H)>0$ (or by making \emph{ad hoc} assumptions, like setting $P(E|H)=1$, when $P(H)=0$). 
 However,   in the  coherence-based approach, conditional probability $p(E|H)$ is a \emph{primitive} notion and it is properly defined and managed even if the  conditioning event  has  probability zero, i.e., ${p(H)=0}$. If $E$ and $H$ are logically independent, then, by coherence,  $p(E|H)$ can take any value in the unit interval $[0,1]$. 
 Moreover,  for any coherent $p$, the equation  $p(E|H)p(H)=p(E\wedge H)$   follows as a \emph{theorem} (\emph{compound probability theorem}). In particular, when $p(H)>0$, of course coherence requires that  $p(E|H)=\frac{p(E\wedge H)}{p(H)}$; however, when $p(H)=0$, coherence requires that $p(E|H)\in[0,1]$.
 
In the subjective approach to probability of de Finetti no algebraic structure of events is required. For each (conditional) event  of interest, the uncertainty  is directly evaluated in terms of a degree of belief, by means of coherent probability. This evaluation concerns only the (conditional) events of interest, without the necessity of evaluating degrees of belief of  all  events (possibly unrealistic many and irrelevant ones) of a presupposed suitable algebra. This approach is therefore more flexible, epistemically economic, and more realistic compared to the approaches which presuppose to give probability values to each element of the whole algebraic structure. Hence, as degrees of belief are primitive, conditional probabilities are also primitive. Moreover,  any event $E$ coincides with the conditional event $E|\Omega$ and hence the (unconditional) probability  $p(E)$ coincides with  the conditional probability $p(E|\Omega)$. Therefore,  conditional probability is primitive in our approach and does not necessarily require a basis on  unconditional probabilities.  

 For other axiomatic approaches to conditional probability which allow for such zero probabilities but which  presuppose---differently from coherence---an algebraic structure 
  see e.g.,
\cite{Csaszar55,Dubins75,popper59,renyi55,Renyi56}.
For a discussion of different axiomatic approaches and interpretations of conditional probability see \cite{easwaran19}. 
 We recall that  a coherent  assessment $\P$ on an
arbitrary family $\F$  of conditional events---possibly without any algebraic structures---can always be extended as a full axiomatic (finitely additive) conditional probability $p$ on $\mathcal{A}\times \mathcal{A}^0$, where $\mathcal{A}$ is a Boolean algebra and $\mathcal{A}^0=\mathcal{A}\setminus \{\bot\}$, such that,  for all $E|H\in \F$, it holds that $E\in \mathcal{A}$, $H\in \mathcal{A}^0$, and  $p(E|H)=\P(E|H)$
(\cite{Rega85}; see also \cite{coletti02,Holz85,Rigo88}).
Moreover, given a real-valued function  $p$ on $\mathcal{A}\times \mathcal{B}$, where $\mathcal{A}$ is a Boolean algebra and $\mathcal{B}$ is an arbitrary nonempty subset of $\mathcal{A}^0$ (meaning  that  no restrictions are made for the class of conditioning events $\mathcal{B}$), 
which satisfies the following properties of an axiomatic (finitely additive) conditional probability:
 \begin{enumerate}[$(i)$]
 \item   $p(\cdot|H)$ is a finitely additive probability on $\mathcal{A}$, for each $H\in \mathcal{B}$;
 \item $p(H|H)$=1, for each $H\in \mathcal{B}$;
 \item $p(AB|H) = p(A|BH)p(B|H)$,  for every $A,B,H,$ with $A, B\in \mathcal{A}$ and  
  $BH,H\in \mathcal{B}$,
 \end{enumerate}
then  it could be that the function $p$ is not coherent (\cite{coletti02,Gili95a,GiSp92}). However,  Rigo (\cite{Rigo88}) has shown that Cs\`{a}sz\`{a}r's condition (\cite{Csaszar55}, see also \cite{Gili04})  is necessary and sufficient for
coherence of $p$ on $\mathcal{A}\times \mathcal{B}$.
Furthermore,  
a function $p:\mathcal{A}\times \mathcal{B}$ satisfying $(i)$, $(ii)$ and $(iii)$, is coherent, when 
$\mathcal{B}$ has a particular structure, for instance, if 
 $\mathcal{B}$ is additive (\cite{coletti02,Holz85}), or if  $\mathcal{B}$ is quasi-additive
(\cite{gili89,Sanf12}).

In the present context dealing with zero probability antecedents will be important for analyzing the validity of the probabilistic syllogisms and  for investigating 
probabilistic  existential import assumptions. We also interpret the premise set of each syllogism as  a  suitable (precise/imprecise) conditional probability assessment on the respective sequence of conditional events (without presupposing particular algebraic structures). Specifically,  we are interested in the probability propagation from the premise set to the conclusion. Coherence provides therefore tools to systematically investigate  these aspects.

Traditional categorical syllogisms  are valid argument forms consisting of two premises and a conclusion, which are composed of
basic syllogistic sentence types   (see, e.g., \cite{pfeifer06}): \emph{Every  $a$ is $b$} (A),
\emph{No  $a$ is $b$} (E),
\emph{Some $a$ is $b$} (I), and
\emph{Some $a$ is not $b$} (O), where ``$a$'' and ``$b$'' denote two of the three categorical terms $M$ (``middle term''), $P$  (``predicate term''), or $S$  (``subject term''). 
As an example of sentence type A consider \emph{Every man is mortal}. 
 The $M$ term appears only in the premises and is combined with $P$ in the first premise (``major premise'') and $S$ in the second premise (``minor premise''). 
In the conclusion only the  $S$ term and  the $P$ term appear, traditionally in the fixed order $S$--$P$.  
By all possible permutations of the predicate order, four syllogistic \emph{figures} result under the given restrictions. Following Aristotle's \emph{Analytica Priora}, we will focus on the first three figures. Specifically, on the traditionally valid Aristotelian syllogisms of  Figure I, II, and III (see Table \ref{TAB:AristSyl}).  
\begin{table}

  \begin{minipage}{\textwidth}\centering 
	\begin{tabular}{lll}      \hline
\multicolumn{3}{c}{Figure I (term order: $M$--$P$, $S$--$M$, \emph{therefore} $S$--$P$)}\\\hline
AAA & Barbara & \emph{Every M is P, Every S is M, therefore Every S is P}\\
AAI\textsuperscript{*} & Barbari &\emph{Every M is P, Every S is M, therefore Some S is P}\\
AII & Darii &\emph{Every M is P, Some S is M, therefore Some S is P}\\
EAE & Celarent &\emph{No M is P, Every S is M, therefore No S is P}\\
EAO\textsuperscript{*}\, & Celaront &\emph{No M is P, Every S is M, therefore Some S is not P}\\
EIO & Ferio &\emph{No M is P, Some S is M, therefore Some S is not P}\\
 \hline

\multicolumn{3}{c}{Figure II (term order: $P$--$M$, $S$--$M$, \emph{therefore} $S$--$P$)}\\\hline

AEE & Camestres &\emph{Every P is M, No S is M, therefore No S is P}\\ 
AEO\textsuperscript{*} & Camestrop \, &\emph{Every P is M, No S is M, therefore Some S is not P}\\
AOO & Baroco &\emph{Every P is M, Some S is not M, therefore Some S is not P}\\
EAE & Cesare &\emph{No P is M, Every S is M, therefore No S is P}\\
EAO\textsuperscript{*} & Cesaro &\emph{No P is M, Every S is M, therefore Some S is not P}\\
EIO & Festino &\emph{No P is M, Some S is M, therefore Some S is not P}\\
\hline

\multicolumn{3}{c}{Figure III (term order: $M$--$P$, $M$--$S$, \emph{therefore} $S$--$P$)}\\\hline
AAI\textsuperscript{*} & Darapti &\emph{Every M is P, Every M is S, therefore Some S is P}\\
AII & Datisi &\emph{Every M is P, Some M is S, therefore Some S is P}\\
IAI & Disamis &\emph{Some M is P, Every M is S, therefore Some S is P}\\
EAO\textsuperscript{*} & Felapton &\emph{No M is P, Every M is S, therefore Some S is not P}\\
EIO & Ferison &\emph{No M is P, Some M is S, therefore Some S is not P}\\
OAO & Bocardo &\emph{Some M is not P, Every M is S, therefore Some S is not P}\\
\hline
\end{tabular}
\end{minipage}
	\caption{Traditional and logically valid Aristotelian syllogisms. \textsuperscript{*} denotes syllogisms with implicit existential import assumptions.}
\label{TAB:AristSyl}
\end{table}
 Consider  \emph{(Modus) Barbara}, which is  a valid syllogism of Figure~I:  \emph{Every 
 	$M$ is $P$, Every $S$ is $M$, 
 	therefore Every $S$ is $P$}. 

Note that some traditionally valid syllogisms require existential import assumption for the validity. For example, Barbari (\emph{Every 
	$M$ is $P$, Every $S$ is $M$, 
	therefore Some $S$ is $P$}) is valid under the assumption that the $S$ term is not ``empty'' (in the sense that there is some $S$).
   The names of the syllogisms  traditionally encode logical properties. For the present purpose, we only recall that  vowels refer to the syllogistic sentence type: for instance,  B\underline{a}rb\underline{a}r\underline{a} involves three sentences of type~A, i.e., AAA (for details see, e.g., \cite{pfeifer06}).  
   
In our approach we interpret the syllogistic terms as events. An event  is conceived  as a bi-valued logical entity which can be true or false.
Moreover, we associate  (ordered) pair of  terms $S$--$P$ with the corresponding conditional event $P|S$, that is as a tri-valued logical object (\cite{gilio16}).
 
For giving a probabilistic interpretation of the premises and the conclusions of the syllogisms,   we interpret basic syllogistic sentence types A, E, I, O by suitable  imprecise conditional probability assessments. 
Specifically, we  interpret the degree of belief in syllogistic sentence A by $p(P|S)=1$, E by $p(P|S)=0$,  I by $p(P|S)>0$, and we interpret O by $p(\widebar{P}|S)>0$ (Table~\ref{Table:BS}; see  also \cite{chater99,gilio16}). 
Thus, A and E are interpreted as precise probability assessments and I and O by imprecise probability assessments.  
\begin{table}	
	\centering %
	\begin{tabular}{llcc}\hline 
		Type \quad & \quad Sentence \quad & Probabilistic interpretation \quad&\quad Equivalent interpretation    \\
		(A) & \emph{Every $S$ is $P$} & $p(P|S)=1$  & $p(\no{P}|S)=0$   \\
		(E) & \emph{No $S$ is $P$ (Every $S$ is not $P$)}& $p(P|S)=0$ & $p(\no{P}|S)=1$\\
		(I) & \emph{Some $S$ is $P$}& $p(P|S)>0$ & $p(\no{P}|S)<1$\\
		(O) & 
		\emph{Some $S$ is not $P$} \quad & $p(\no{P}|S)>0$ & $p(P|S)<1$\\
		\hline 
	\end{tabular}
\caption{Probabilistic interpretations of the basic syllogistic sentence types based on $P|S$ and $\no{P}|S$.}
\label{Table:BS} %
\end{table}
The basic logical relations among this interpretation of the  syllogistic sentence types are analyzed in the probabilistic Square and in the probabilistic  Hexagon of Opposition  (\cite{PS17SH,2016:SMPS2}).
\begin{remark}\label{REM:Asymmetry}
 We note that $p(P|S)$ does not constrain $p(S|P)$. Indeed, as we will show in Proposition \ref{PROP:ASYMMETRY},  given two logically independent events $S$ and $P$, the probability assessment $(x,y)$ on $(P|S,S|P)$, where $x=p(P|S)$ and  $y=p(S|P)$, is coherent for every $(x,y)\in[0,1]^2$.  Therefore, the interpretation of all the basic syllogistic sentence types in terms of conditional probabilities is not symmetric. For instance, $p(P|S)>0$ does not constrain $p(S|P)>0$, and hence \emph{Some $S$ 
 is $P$} does not imply \emph{Some $P$ is $S$}.
 Moreover, the interpretation of (A), (E), (I), (O) in terms of the conditional probabilities is weaker than in terms of   conjunction probabilities. For instance, the sentence type (I) interpreted by   $p(S\wedge P)>0$
 implies 
 $p(P|S)>0$  but not \emph{vice versa}.
 Indeed, as coherene requires that  
\begin{equation}\label{EQ:CONG_I}
 p(S\wedge P)=p(P|S)p(S),
 \end{equation}
 when $p(S\wedge P)>0$, it must be that $p(P|S)>0$. 
 Concering the converse, however, we recall that 
  in the coherence approach, 
  the assessment $p(P|S)=x$ and $p(S)=p(S\wedge P)=0$ is coherent for every $x\in[0,1]$, and in particular for $x>0$ (in such a case equation (\ref{EQ:CONG_I}) is  satisfied by $0=0$, even if $p(P|S)>0$).
 Hence, $p(P|S)>0$ does not imply that $p(S\wedge P)$ is  necessarily positive and therefore the conditional interpretation is weaker than the conjunction interpretation. Notice that, this asymmetry cannot be expressed in  approaches which require positive probability for the conditioning events.\footnote{This asymmetry is also not present in predicate logical interpretations of Aristotelian syllogisms (under appropriate existential import assumptions), since for example (I) can be equivalently expressed by ``\emph{for at least one $x$: $x$ is $S$ and $x$ is $P$}'' and by  ``\emph{it is not the case that for all $x$: if $x$ is $S$ then $x$ is not-$P$}''.}
\end{remark}

 For framing the Aristotelian syllogisms as  probabilistic 
inferences, we define validity of  probabilistic inferences. We recall that in classical logic some  Aristotelian syllogisms, like Barbari,  require existential import assumptions for logical validity (marked by * in Table~\ref{TAB:AristSyl}).  In the present approach we  require probabilistic versions of existential import assumptions for the validity of \emph{all} traditionally valid syllogisms. 
For example, we do not only require an existential import assumption for syllogisms like Barbari but also for syllogisms like Barbara.
Indeed, from the probabilistic premises of Barbari and Barbara,  i.e., $p(P|M)=1$ and  $p(M|S)=1$, 
we cannot validly infer the respective conclusion because
only a non-informative conclusion can be obtained,  i.e., 
every value of $p(P|S)$ in $[0,1]$  is coherent. 
In order to validate the conclusions of Barbari and Barbara, that is $p(P|S)>0$ and $p(P|S)=1$, respectively, we add the probabilistic constraints $p(S|(S\vee M))>0$ as a probabilistic existential import assumption. 

Based on a generalization of de Finetti's fundamental theorem  to (precise and imprecise) conditional probability, 
we study the coherent probability propagation rules of  argument forms of the syllogistic Figures I, II, and III.
These results allow to show, 
for all three Figures,  that each traditionally valid syllogism is also valid in our coherence-based probability semantics. 
Moreover, 
we build a  bridge from our probability semantics of Aristotelian syllogisms to nonmonotonic reasoning by interpreting the basic syllogistic sentence types by suitable defaults (A: $S\normally P$, E: $S\normally \no{P}$) and negated defaults (I: $S\nnormally \no{P}$, O: $S\nnormally P$).   
We also show how the proposed semantics can be used to analyze syllogisms involving generalized quantifiers (like \emph{most $S$ are $P$}).

 The paper is organized as follows.
 In Section~\ref{SEC:PN}
 we recall  preliminary notions and results on the  coherence of conditional  probability assessments and recall an algorithm for  coherent probability propagation. 
In Section \ref{SEC:VALIDITY} we define  \emph{validity} and \emph{strict validity} of probabilistic inferences and  probabilistic notions of the existential import, which is required for the validity of the syllogisms. 
In sections \ref{SEC:WT}, \ref{SEC:FIGUREII}, and  \ref{SEC:FIGUREIII} we study the coherent probability propagation rules of  argument forms of the syllogistic Figures I, II, and III, respectively. Then, we show for all three Figures  that each traditionally valid syllogism is also valid in our coherence-based probability semantics.  
In Section~\ref{SEC:NM} we build a  bridge from our probability semantics of Aristotelian syllogisms to nonmonotonic reasoning by interpreting the basic syllogistic sentence types by suitable defaults and negated defaults. Then we discuss  Aristotle's methods of proof: we show why  \emph{reductio} by \emph{conversion}  does not hold   and to what extent \emph{reductio ad impossibile} holds in our approach in Section~\ref{SEC:CONV}.   
In Section~\ref{SEC:GQ} we show how the proposed probability propagation rules can be used to analyze syllogisms involving generalized quantifiers (like \emph{Most}). 
Section~\ref{SEC:CONCL} concludes the paper by a brief summary of the main results and by an outlook to future work.

\section{Preliminary notions and results on coherence.} 
\label{SEC:PN}
In this section we recall selected key features of coherence (for more details see, e.g., \cite{biazzo00,BiGS12,coletti02,CoSV15,gilio13,gilio13ins,PeVa17,SPOG18}).
We denote events (which can be true or false) and their indicators (which can be 1 or 0) by the same symbols (e.g., the indicator of the event $E$ is denoted by the same symbol $E$).
Given two events $E$ and  $H$, with $H\neq \bot$,  the \emph{conditional event} $E|H$ is defined as a three-valued logical entity which is \emph{true} if $EH$ (i.e., $E\wedge H$) is true, \emph{false} if $\widebar{E}H$ is true, and  \emph{void} if $H$ is false.
\paragraph{Coherence and betting scheme}
In betting terms, assessing $p(E|H)=x$ means that, for every real number $s$,  you are willing to pay 
an amount $s\cdot x$ and to receive $s$, or 0, or $s\cdot x$, according
to whether $EH$ is true, or $\widebar{E}H$ is true, or $\widebar{H}$
is true (i.e., the
bet is called off), respectively. In these cases the random gain is $G=sH(E-x)$.
More generally speaking, 
consider a
real-valued function $p : \; \mathcal{K} \, \rightarrow\, \mathbb{R}$,
where $\mathcal{K}$ is an arbitrary (possibly not finite) family of
conditional events.
Let  $\mathcal{F} = (E_{1}|H
_{1}, \ldots, E_{n}|H_{n})$ be a sequence of $n$ conditional events, where $E_{j}|H_{j}\in\mathcal{K}$,
$j=1,\ldots,n$,  and let $\mathcal{P} =(p_{1}, \ldots, p_{n})$ be the vector of values   $p_{j} = p(E_{j}|H_{j})$, where $j = 1, \ldots, n$. We denote by
$\mathcal{H}_{n}$ the disjunction $H_{1} \vee\cdots\vee H_{n}$.  With
the pair $(\mathcal{F}, \mathcal{P})$ we associate the random gain
$G = \sum_{j=1}^{n} s_{j}H_{j}(E_{j} - p_{j})$, where
$s_{1}, \ldots, s_{n}$ are $n$ arbitrary real numbers. $G$
represents the net gain of $n$ transactions. Let $\mathcal{G}_{\mathcal{H}_{n}}$ denote the set of possible values of 
$G$ restricted to $\mathcal{H}_{n}$, that is, the  values of 
$G$ when at least one conditioning event is true (bet is not called off).
\begin{definition}
\label{COER-BET}
Function $p$ defined on $\mathcal{K}$ is  \emph{coherent}
if and only if, for every integer $n$, for every sequence $
\mathcal{F}$ of $n$ conditional events in $\mathcal{K}$ and for every
$s_{1}, \ldots, s_{n}$, it holds that: $\min \mathcal{G}_{\mathcal{H}_{n}}
\leq0 \leq\max \mathcal{G}_{\mathcal{H}_{n}}$.
\end{definition}
Intuitively,  Definition \ref{COER-BET} means in betting terms that a probability assessment is
coherent if and only if, in any finite combination of $n$ bets, it cannot happen that
the values in  $\mathcal{G}_{\mathcal{H}_{n}}$---that is the values of the random gain by ignoring  the cases where the bet is called off---are all
positive, or all negative (\emph{no Dutch Book}).
\paragraph{Geometrical interpretation of coherence}
Coherence can also be characterized geometrically. Let $\mathcal{F} = (E_{1}|H
_{1}, \ldots, E_{n}|H_{n})$.
As $\Omega=E_jH_j \vee \widebar{E}_jH_j \vee \widebar{H}_j   \,,\;\; j = 1, \ldots,
n$, it holds that $\Omega =\bigwedge_{j=1}^n(E_jH_j \vee \widebar{E}_jH_j \vee \widebar{H}_j)$.
By  applying the  distributive property  it follows that 
$\Omega$
can also  be written as the disjunction of $3^n$ logical
conjunctions, some of which may be impossible.  The remaining ones
are the constituents, generated by  $\mathcal{F}$ and, of course, form a partition of $\Omega$. We denote by
$C_1, \ldots, C_m$ the constituents contained in $\mathcal{H}_n$ and (if
$\mathcal{H}_n \neq \Omega$) by $C_0$ the remaining constituent
$\widebar{\mathcal{H}}_n =
\widebar{H}_1 \cdots \widebar{H}_n $, so that
\[
\mathcal{H}_n = C_1 \vee \cdots \vee C_m \,,\;\;\; \Omega =
\widebar{\mathcal{H}}_n \vee
\mathcal{H}_n = C_0 \vee C_1 \vee \cdots \vee C_m \,,\;\;\; m+1 \leq 3^n
\,.
\]
Let $\P=(p_1,\ldots,p_n)$, where $p_j=P(E_j|H_j)$, $j=1,\ldots,n$.  For  each constituent $C_h$, $h=1,\ldots,m$, 
we associate a point
$Q_h = (q_{h1}, \ldots, q_{hn})$, where $q_{hj} = 1$, or 0, or $p_j$, according to whether $C_h \subseteq E_jH_j$, or $C_h \subseteq \no{E}_jH_j$, or $C_h \subseteq \no{H}_j$. 
The point $Q_0=\P$
is associated with $C_0$. We say that the points $Q_0,Q_1,\ldots, Q_m$ are assoaciated with the pair $(\F,\P)$.
For an instance on how the constituents and the associated points are generated we
consider  the following
\begin{example}\label{EX:RUNEXAMPLE}
 Let  $\F=(E_1|H_1,E_2|H_2)=(C|B,B|A)$, where $A,B,C$ are three logically independent events, and $\P=(p_1,p_2)$ be a probability assessment on $\F$.  It holds that: 
\[
\begin{array}{ll}
\Omega=(BC\vee B\no{C} \vee \no{B})\wedge (AB\vee A\no{B} \vee \no{A})=C_0\vee C_1\vee \cdots \vee C_5,
\end{array}
\]
where the constituents are  $C_1=ABC$, $C_2=\no{A}BC$, $C_3=AB\no{C}$, $C_4=\no{A}B\no{C}$,  $C_5=A\no{B}$, and 
$C_0=\no{A}\,\no{B}$. We observe that  $\mathcal{H}_2=C_1\vee \cdots \vee C_5=A\vee B$. Moreover, the points  $Q_1=(1,1)$, $Q_2=(1,p_2)$, $Q_3=(0,1)$, $Q_4=(0,p_2)$, $Q_5=(p_1,0)$, and $Q_0=\P=(p_1,p_2)$ are associated with $(\F,\P)$.
\end{example}
Denoting by $\mathfrak{I}$ the convex hull of
$Q_{1}, \ldots , Q_{m}$, by a suitable alternative theorem
(Theorem~2.9 in \cite{Gale60}), the condition $\mathcal{P}
\in \mathfrak{I}$ is equivalent to the condition $\min \mathcal{G}_{\mathcal{H}_n} \leq 0 \leq \max \mathcal{G}_{\mathcal{H}_n}$ given
in Definition~\ref{COER-BET} 
(see, e.g., \cite{Gili96,gilio13ins}). Moreover, the condition $\mathcal{P}
\in \mathfrak{I}$ amounts to the solvability of the following system $(\Systemsigma)$ in the unknowns $\lambda_{1}, \ldots , \lambda_{m}$
\begin{equation}
\begin{array}{ll}
\label{SYST-SIGMA}
(\Systemsigma) :
\quad \quad
\sum_{h=1}^{m} q_{hj} \lambda_{h} = p_{j} \; , \; \; j\in J_{n} \,
;
\;\; \sum_{h=1}^{m} \lambda_{h} = 1 \; ;\; \; \lambda_{h} \geq 0 \, ,
\; h \in J_{m}\,,
\end{array}
\end{equation}
where,  $J_n=\{1,2,\ldots,n\}$, for every integer $n$.
We say that system $(\Systemsigma) $ is associated with the pair $(
\mathcal{F},\mathcal{P})$. Hence, the following result provides
a characterization of the notion of coherence given in Definition~\ref{COER-BET} (Theorem~4.4 in \cite{Gili90}; see also
\cite{Gili92,GSisipta11,gilio13ins}).

\begin{theorem}
	\label{CNES}
	The function
	$p$ defined on an arbitrary family of conditional events $\mathcal{K}$ is coherent if and only if, for every finite subsequence  $\mathcal{F}=(E_{1}|H_{1}, \ldots , E_{n}|H_{n})$ of $
	\mathcal{K}$, denoting by $\mathcal{P}$ the vector $(p_{1},
	\ldots , p_{n} )$, where $p_{j}=p(E_{j}|H_{j})$, $j=1,2,\ldots ,n$, the
	system $(\Systemsigma) $ associated with the pair $(\mathcal{F},
	\mathcal{P})$ is solvable.
\end{theorem}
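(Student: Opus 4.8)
The plan is to reduce the global coherence condition of Definition~\ref{COER-BET}, which is universally quantified over all finite sequences drawn from $\mathcal{K}$, to a collection of statements---one per finite subsequence---and then to chain together the two equivalences already assembled in the discussion preceding the theorem. Concretely, it suffices to prove, for each fixed finite subsequence $\mathcal{F} = (E_1|H_1, \ldots, E_n|H_n)$ with associated vector $\mathcal{P} = (p_1, \ldots, p_n)$, that the betting inequality $\min G_{\mathcal{H}_n} \leq 0 \leq \max G_{\mathcal{H}_n}$ holds for every choice of $s_1, \ldots, s_n$ if and only if the associated system $(\Systemsigma)$ is solvable. Matching these two conditions subsequence by subsequence then yields the characterization, since both the definition of coherence and the conclusion of the theorem range over exactly the same finite subsequences.

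First I would fix such an $\mathcal{F}$ and evaluate the random gain $\mathcal{G} = \sum_{i=1}^n s_i H_i(E_i - p_i)$ on each constituent $C_h \subseteq \mathcal{H}_n$. A three-way case analysis---according to whether $C_h \subseteq E_i H_i$, $C_h \subseteq \no{E}_i H_i$, or $C_h \subseteq \no{H}_i$---shows that the $i$-th summand evaluates to $s_i(q_{hi} - p_i)$ in every case, by the very definition of the coordinates $q_{hi}$. Summing over $i$, the value of $\mathcal{G}$ on $C_h$ is the inner product $(Q_h - \mathcal{P}) \cdot (s_1, \ldots, s_n)$. Consequently $G_{\mathcal{H}_n}$ is exactly the finite set $\{(Q_h - \mathcal{P}) \cdot s : h \in J_m\}$, and the betting inequality for all $s$ becomes the requirement that no vector $s$ renders all of these inner products strictly positive (the strictly-negative case being recovered by replacing $s$ with $-s$).

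The key step is the invocation of the alternative theorem \cite[Theorem~2.9 in][]{Gale60}: the nonexistence of such a separating vector $s$ is equivalent to the membership $\mathcal{P} \in \mathcal{I}$, where $\mathcal{I}$ is the convex hull of $Q_1, \ldots, Q_m$. I expect this separation-versus-membership dichotomy to be the only genuinely nontrivial ingredient, since it is precisely the point at which the geometry of the gain vectors is translated into a solvability assertion; everything surrounding it is bookkeeping.

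Finally, unwinding the definition of the convex hull, $\mathcal{P} \in \mathcal{I}$ means that there exist $\lambda_1, \ldots, \lambda_m \geq 0$ with $\sum_{h} \lambda_h = 1$ and $\sum_{h} \lambda_h Q_h = \mathcal{P}$; reading this vector identity coordinatewise reproduces the equations $\sum_{h} q_{hj}\lambda_h = p_j$ of $(\Systemsigma)$. Hence $\mathcal{P} \in \mathcal{I}$ holds if and only if $(\Systemsigma)$ is solvable, closing the chain (betting inequality for all $s$) $\Leftrightarrow$ ($\mathcal{P} \in \mathcal{I}$) $\Leftrightarrow$ (solvability of $(\Systemsigma)$) for the fixed subsequence. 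Quantifying over all finite subsequences of $\mathcal{K}$ then completes the argument.
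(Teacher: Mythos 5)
Your proposal is correct and follows essentially the same route the paper takes: the paper recalls this theorem (citing Gilio) and justifies it in the preceding paragraph by exactly your chain of equivalences, namely that the gain values on the constituents are the inner products $(Q_h-\mathcal{P})\cdot s$, that the betting condition is equivalent to $\mathcal{P}\in\mathcal{I}$ via Gale's alternative theorem, and that $\mathcal{P}\in\mathcal{I}$ is by definition the solvability of $(\Systemsigma)$. Your verification of the coordinate identity $q_{hi}-p_i$ in the three cases and the reduction of the two-sided inequality to the one-sided separation condition via $s\mapsto -s$ are both sound.
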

\paragraph{Coherence checking}
We recall now some results on the coherence checking of a probability
assessment on a finite family of conditional events. Given a probability
assessment $\mathcal{P}=(p_{1}, \ldots , p_{n} )$ on  $\mathcal{F}=(E_{1}|H_{1}, \ldots , E_{n}|H
_{n})$, let $\mathcal{S}$ be the set of solutions of the form $\Lambda = (\lambda_{1},
\ldots , \lambda_{m})$ of the system $(\Systemsigma) $. Then, assuming
$\mathcal{S} \neq \emptyset $, we define
\begin{equation}\label{EQ:PHI}
\begin{array}{l}
\Phi_{j}(\Lambda ) = \Phi_{j}(\lambda_{1}, \ldots , \lambda_{m}) =
\sum_{r :
	C_{r} \subseteq H_{j}} \lambda_{r} \; ,
\; \; \;
j \in J_{n} \,;\; \Lambda \in \mathcal{S} \,;
\\
M_{j} = \max_{\Lambda \in \mathcal{S} } \; \Phi_{j}(\Lambda ) \; ,
\; \; \;
j\in J_{n},
\end{array}
\end{equation}
and 
\begin{equation}\label{EQ:I0}
    I_{0} = \{ j\in J_n \, : \, M_{j}=0 \}.
\end{equation}
Assuming $\mathcal{P}$ coherent, each
solution $\Lambda =(\lambda_{1}, \ldots , \lambda_{m})$ of system
$(\Systemsigma) $ is a coherent extension of the assessment $\mathcal{P}$ on $\mathcal{F}$ to the sequence $(C_{1}|\mathcal{H}_{n},C
_{2}|\mathcal{H}_{n},\, \ldots ,\, C_{m}|\mathcal{H}_{n})$. 
Then, for each solution $\Lambda $ of system
$(\Systemsigma) $ the quantity $\Phi_{j}(\Lambda )$ is a coherent extension of the
conditional probability $p(H_{j}|\mathcal{H}_{n})$. Moreover, the quantity
$M_{j}$ is the upper probability $p''(H_{j}|\mathcal{H}_{n})$ over all
the solutions $\Lambda $ of system $(\Systemsigma) $. Of course, $j \in I
_{0}$ if and only if $p''(H_{j}|\mathcal{H}_{n})=0$. Notice that
$I_{0}$ is a strict subset of $J_{n}$.
If $I_{0}$ is nonempty,
we set  $\mathcal{F}_{0}=(E_i|H_i\in \F,i\in I_0)$ and $\mathcal{P}_{0}=(p(E_i|H_i), i\in I_0)$.
We say that   the pair  $(\mathcal{F}_{0},\mathcal{P}_{0})$  is associated with $I_{0}$.
Then, we have (Theorem~3.3 in \cite{Gili93}):
\begin{theorem}
	\label{COER-P0}The assessment $\mathcal{P}$ on $\mathcal{F}$ is coherent
	if and only if the following conditions are satisfied: (i) 	the system $(\Systemsigma) $ associated with the pair $(\mathcal{F},
	\mathcal{P})$ is solvable; (ii) if $I_{0} \neq \emptyset $, then
	$\mathcal{P}_{0}$ is coherent.
\end{theorem}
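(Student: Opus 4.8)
The plan is to lean on Theorem~\ref{CNES} for the (easy) necessity direction and to verify the betting criterion of Definition~\ref{COER-BET} subsequence-by-subsequence for the (substantial) sufficiency direction, exploiting the split of $J_n$ into the ``null'' indices $I_0$ and the ``non-null'' indices $J_n\setminus I_0$. For necessity, suppose $\mathcal{P}$ is coherent; applying Theorem~\ref{CNES} to the whole sequence $\mathcal{F}$ immediately gives solvability of $(\Systemsigma)$, which is condition~(i), and since $\mathcal{F}_0$ is a subsequence of $\mathcal{F}$, every betting test required of $\mathcal{P}_0$ in Definition~\ref{COER-BET} is already among those required of $\mathcal{P}$ (set $s_i=0$ for $i\notin I_0$), so $\mathcal{P}_0$ inherits coherence, which is condition~(ii).

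For sufficiency, assume (i) and (ii). The first step is to extract from $\mathcal{S}$ a single solution $\Lambda^\ast$ with $\Phi_j(\Lambda^\ast)>0$ for \emph{all} $j\in J_n\setminus I_0$ simultaneously: since $\mathcal{S}$ is convex and $M_j>0$ for each such $j$, I would average finitely many solutions, one witnessing $\Phi_j>0$ for each $j\notin I_0$, so that linearity and nonnegativity of the $\Phi_j$ force $\Phi_j(\Lambda^\ast)>0$ off $I_0$ and $\Phi_j(\Lambda^\ast)=0$ on $I_0$. Reading $\Lambda^\ast$ as a probability mass function $P^\ast$ on the constituents $C_1,\ldots,C_m$, the equations of $(\Systemsigma)$ become $P^\ast(E_jH_j)=p_j\,P^\ast(H_j)$ for every $j$, while $P^\ast(H_j)=\Phi_j(\Lambda^\ast)$ is positive exactly off $I_0$; thus $P^\ast$ annihilates the whole null layer $\bigvee_{j\in I_0}H_j$ and reproduces $p_j$ as a genuine conditional probability for every non-null $j$.

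It then remains to check, for each subsequence indexed by $J\subseteq J_n$ and each choice of stakes $(s_i)_{i\in J}$, that the gain $\mathcal{G}=\sum_{i\in J}s_iH_i(E_i-p_i)$ satisfies $\min G_{\mathcal{H}_J}\leq 0\leq\max G_{\mathcal{H}_J}$, where $\mathcal{H}_J=\bigvee_{i\in J}H_i$. If $J$ meets the non-null layer, i.e.\ $J\cap(J_n\setminus I_0)\neq\emptyset$, I would argue through $P^\ast$: each summand has $P^\ast$-mean $s_i(P^\ast(E_iH_i)-p_iP^\ast(H_i))=0$, so $\mathcal{G}$ has zero $P^\ast$-expectation; since some $H_j$ with $j\notin I_0$ is non-null we have $P^\ast(\mathcal{H}_J)>0$, and because $\mathcal{G}$ vanishes off $\mathcal{H}_J$ it must take values of both signs on $\mathcal{H}_J$, yielding the required inequality. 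If instead $J\subseteq I_0$, the subsequence lies inside $\mathcal{F}_0$, and by~(ii) the assessment $\mathcal{P}_0$ is coherent, a property inherited by its subsequences, which delivers the same inequality directly. Since every $J$ falls into exactly one of these two cases, the betting criterion of Definition~\ref{COER-BET} holds for $\mathcal{P}$, which is therefore coherent.

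The step I expect to be the main obstacle is the construction and deployment of $P^\ast$: one must argue that a single solution can be made strictly positive on all non-null conditioning events at once (not merely one at a time) and that this simultaneously kills exactly the layer indexed by $I_0$, so that the two cases above genuinely partition all subsequences with no subsequence in which $\mathcal{G}$ could evade the sign test. This is also where the recursive flavour of the statement surfaces, since the appeal to coherence of $\mathcal{P}_0$ on the strictly smaller family $\mathcal{F}_0$ (with $|I_0|<n$) makes the overall argument an implicit induction on the number of conditional events.
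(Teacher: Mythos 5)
The paper does not prove Theorem~\ref{COER-P0}; it is recalled from the literature (Theorem~3.3 of the cited Gilio 1993 paper), so there is no in-paper proof to compare against. Your argument is correct and follows the standard route of that cited proof: restriction gives necessity, and for sufficiency the convexity of $\mathcal{S}$ yields a single solution $\Lambda^\ast$ positive on every $\Phi_j$ with $j\notin I_0$ and null exactly on $I_0$, after which the zero-expectation argument under $P^\ast$ handles every subsequence meeting $J_n\setminus I_0$ and hypothesis (ii) handles the subsequences inside $I_0$. Only one phrase deserves tightening: in the non-null case the gain need not take ``values of both signs'' on $\mathcal{H}_J$ --- it may be identically zero on the support of $P^\ast$ --- but since its $P^\ast$-conditional expectation given $\mathcal{H}_J$ is zero, some possible value is $\le 0$ and some is $\ge 0$, which is exactly the inequality $\min G_{\mathcal{H}_J}\le 0\le\max G_{\mathcal{H}_J}$ required by Definition~\ref{COER-BET}; also note that no induction is actually needed in the proof itself, since coherence of $\mathcal{P}_0$ is a hypothesis rather than something to be derived.
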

Let  $\mathcal{S}'$ be a nonempty subset of the set of solutions
$\mathcal{S}$ of system $(\Systemsigma) $. We denote by  $I_0'$ the set $I_0$ defined as in (\ref{EQ:I0}), where $\mathcal{S}$ is replaced by $\mathcal{S}'$, that is
\begin{equation}\label{EQ:I0'}
I_{0}' = \{ j\in J_n \, : \, M'_{j}=0 \}, \mbox{ where } M'_{j}=\max_{\Lambda \in \mathcal{S}' } \; \Phi_{j}(\Lambda ) \; ,
\; \; \;
j\in J_{n}.
\end{equation}
 Moreover, we denote by $(\F_0',\P_0')$ the pair associated with $I_0'$. Then, we obtain (see, e.g., Theorem 7 in \cite{BiGS03a})
\begin{theorem}
	\label{COER-P0'}The assessment $\mathcal{P}$ on $\mathcal{F}$ is coherent
	if and only if the following conditions are satisfied: (i) 	the system $(\Systemsigma) $ associated with the pair $(\mathcal{F},
	\mathcal{P})$ is solvable; (ii) if $I_{0}' \neq \emptyset $, then
	$\mathcal{P}_{0}'$ is coherent.
\end{theorem}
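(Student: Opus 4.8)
The plan is to deduce Theorem~\ref{COER-P0'} from the already-established Theorem~\ref{COER-P0} by comparing the index set $I_0'$, built from the reduced solution set $\mathcal{S}'$, with the index set $I_0$ built from the full solution set $\mathcal{S}$. The governing idea is that shrinking the solution set from $\mathcal{S}$ to $\mathcal{S}'$ can only enlarge the set of conditioning events whose (upper) probability relative to $\mathcal{H}_n$ is forced to vanish, so that $I_0 \subseteq I_0'$.

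First I would record the monotonicity underlying this inclusion. Because $\mathcal{S}' \subseteq \mathcal{S}$ and each $\Phi_j(\Lambda) = \sum_{r : C_r \subseteq H_j} \lambda_r$ is a sum of nonnegative coordinates, maximizing over the smaller set gives $0 \leq M_j' \leq M_j$ for every $j \in J_n$. Hence $M_j = 0$ forces $M_j' = 0$, which is exactly $I_0 \subseteq I_0'$. Consequently $\mathcal{F}_0 \subseteq \mathcal{F}_0'$ and $\mathcal{P}_0$ is a sub-assessment of $\mathcal{P}_0'$. I would also invoke the hereditary character of coherence, immediate from Definition~\ref{COER-BET}: since the no-Dutch-Book condition is demanded for \emph{every} finite subsequence, the restriction of a coherent assessment to any subfamily is again coherent.

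For the ``only if'' direction, suppose $\mathcal{P}$ is coherent. Then condition~(i) holds at once by Theorem~\ref{CNES}, applied to the finite family $\mathcal{F}$ itself. If moreover $I_0' \neq \emptyset$, then $\mathcal{P}_0'$ is merely the restriction of the coherent $\mathcal{P}$ to the subfamily $\mathcal{F}_0'$, hence coherent by the hereditary property, which gives~(ii).

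For the ``if'' direction, assume (i) and (ii) and split on $I_0$. If $I_0 = \emptyset$, then Theorem~\ref{COER-P0} yields the coherence of $\mathcal{P}$ from~(i) alone, since its second condition is then vacuous. If $I_0 \neq \emptyset$, then the inclusion $I_0 \subseteq I_0'$ forces $I_0' \neq \emptyset$, so hypothesis~(ii) makes $\mathcal{P}_0'$ coherent; as $\mathcal{P}_0$ is a sub-assessment of $\mathcal{P}_0'$, the hereditary property makes $\mathcal{P}_0$ coherent as well, and Theorem~\ref{COER-P0} again delivers the coherence of $\mathcal{P}$. The one delicate point --- the main obstacle --- is precisely the mismatch between a hypothesis phrased through $I_0'$ and a tool (Theorem~\ref{COER-P0}) phrased through $I_0$; the inclusion $I_0 \subseteq I_0'$ together with the case split on whether $I_0$ is empty is exactly what reconciles them, and one must take care that nonemptiness of $\mathcal{S}'$ (needed for $M_j'$ to be well defined) is used only as already granted in the statement.
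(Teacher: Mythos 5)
Your proposal is correct. The paper itself gives no proof of Theorem~\ref{COER-P0'} --- it only cites the literature (Theorem~7 of Biazzo--Gilio--Sanfilippo) --- so there is no in-paper argument to compare against; but your reduction to Theorem~\ref{COER-P0} is sound: the monotonicity $M_j' \leq M_j$ coming from $\mathcal{S}' \subseteq \mathcal{S}$ and $\lambda_r \geq 0$ gives $I_0 \subseteq I_0'$, the hereditary character of coherence under restriction to subfamilies is indeed immediate from Definition~\ref{COER-BET}, and the case split on whether $I_0$ is empty correctly transfers hypothesis~(ii) about $\mathcal{P}_0'$ into the hypothesis about $\mathcal{P}_0$ that Theorem~\ref{COER-P0} requires. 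The only point worth flagging is that $M_j'$ need not be attained as a maximum for an arbitrary nonempty $\mathcal{S}'$ (one should read it as a supremum unless $\mathcal{S}'$ is closed), but this is a looseness already present in the paper's statement, not a gap in your argument.
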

For an illustration of  Theorem \ref{COER-P0'} we consider\\ \ \\
\noindent {\bf Example 1 (continued).}\emph{
We observe that 
$\P=(p_1,p_2)=p_1Q_2+(1-p_1)Q_4$ and 
$\P=p_1p_2Q_1+(1-p_1)p_2Q_3+(1-p_2)Q_5
$. 
Then, $\P=\frac{1}{2}( p_1p_2Q_1+p_1Q_2+
(1-p_1)p_2Q_3+
(1-p_1)Q_4 
+(1-p_2)Q_5)
$. 
By assuming that $(p_1,p_2)\in[0,1]^2$, it follows that the system
$(\Systemsigma)$  associated with $(\F,\P)$ is solvable with a solution given by $\Lambda=\frac{1}{2}(p_1p_2,p_1,(1-p_1)p_2,(1-p_1),(1-p_2))$.
Moreover, $\Phi_{1}(\Lambda)=\lambda_1+\lambda_2+\lambda_3+\lambda_4=\frac{p_2}{2}+\frac{1}{2}>0$ and  
$\Phi_{2}(\Lambda)=\lambda_1+\lambda_3+\lambda_5=\frac{p_1p_2}{2}+\frac{(1-p_1)p_2}{2}+
+\frac{1-p_2}{2}=\frac{1}{2}>0$. Then,
by setting
 $\mathcal{S}'=\{\Lambda\}$ it holds that $M_1'>0$, $M_2'>0$ and hence $I_0'=\emptyset$. Thus, by Theorem \ref{COER-P0'} the assessment $(p_1,p_2)$ is coherent for every $(p_1,p_2)\in[0,1]^2$.}
\paragraph{Algorithm for probability propagation}
We recall the following extension theorem for  conditional probability, which is a generalization of de Finetti's fundamental theorem of probability  to conditional events  (see, e.g., \cite{biazzo00,CoSc96,Holz85,Pelessoni09,Rega85,williams75}). 
\begin{theorem}\label{THM:FUND}
Let a coherent probability assessment $\P= (p_1, \ldots , p_n)$ on a sequence $\F=(E_1|H_1,\ldots ,E_n|H_n)$
and  a further conditional event $E_{n+1} |H_{n+1}$ be given. Then, there exists a suitable closed interval $[z',z'']\subseteq [0,1]$ such that the extension   $(\P,z)$ of $\P$ to  $(\F,E_{n+1} |H_{n+1})$ is coherent if and only if $z \in [z',z'']$.
\end{theorem}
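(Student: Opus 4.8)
The plan is to characterize the set $\Pi = \{z \in [0,1] : (\P,z) \text{ is coherent on } (\F, E_{n+1}|H_{n+1})\}$ and to prove it is a nonempty closed interval. I would first form the enlarged sequence $\F^* = (E_1|H_1, \ldots, E_n|H_n, E_{n+1}|H_{n+1})$, generate the constituents $C_1, \ldots, C_M$ of $\F^*$ contained in $\mathcal{H}_{n+1} = H_1 \vee \cdots \vee H_{n+1}$, and associate with each $C_h$ the point $Q_h = (q_{h1}, \ldots, q_{h,n+1})$ exactly as in the construction preceding Theorem~\ref{CNES}. By Theorem~\ref{CNES}, combined with the reduction furnished by Theorem~\ref{COER-P0}, coherence of $(\P,z)$ is then equivalent to the solvability of the enlarged system $(\Systemsigma)$ associated with $(\F^*,(\P,z))$ together with the recursive coherence of the subassessment indexed by $I_0$.

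The crucial point, and the one that separates this from de Finetti's unconditional theorem, is that the last coordinate $q_{h,n+1}$ equals $1$, $0$, or $z$ according to whether $C_h \subseteq E_{n+1}H_{n+1}$, $C_h \subseteq \no{E}_{n+1}H_{n+1}$, or $C_h \subseteq \no{H}_{n+1}$; hence $z$ itself enters the system. Writing the $(n+1)$-th equation $\sum_h q_{h,n+1}\lambda_h = z$ and isolating the mass on $\no{H}_{n+1}$, I would rewrite the constraint as $\sum_{C_h \subseteq E_{n+1}H_{n+1}} \lambda_h = z \sum_{C_h \subseteq H_{n+1}} \lambda_h$, so that $z = \bigl(\sum_{C_h \subseteq E_{n+1}H_{n+1}} \lambda_h\bigr)\big/\bigl(\sum_{C_h \subseteq H_{n+1}} \lambda_h\bigr)$ whenever the denominator $\Phi_{n+1}(\Lambda) = \sum_{C_h \subseteq H_{n+1}} \lambda_h$ is strictly positive. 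Thus, off the degenerate stratum, the admissible values of $z$ are exactly the values of a linear-fractional function of $\Lambda$ as $\Lambda$ ranges over the compact convex polytope $\widehat{\mathcal{S}}$ of solutions of the reduced system in which only $p_1, \ldots, p_n$ (and $\sum_h \lambda_h = 1$, $\lambda_h \geq 0$) are imposed.

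From here I would obtain the endpoints as the minimum $z'$ and maximum $z''$ of this linear-fractional program over $\widehat{\mathcal{S}}$. I would argue that $\Pi$ is an interval by observing that the image of the convex, hence connected, set $\widehat{\mathcal{S}}$ under a continuous linear-fractional map is connected, so its set of values is an interval; and that the endpoints are attained because such a program over a compact polytope attains its optimum (e.g.\ via the Charnes--Cooper change of variables, which converts it into an ordinary linear program whose optimum is reached at a vertex). Nonemptiness of $\Pi$ follows because $\P$ is coherent on $\F$, so the system $(\Systemsigma)$ for $(\F,\P)$ is solvable by Theorem~\ref{CNES}, and any such solution lifts to a solution of the enlarged system, exhibiting at least one coherent value $z$ and showing $\widehat{\mathcal{S}} \neq \emptyset$.

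The main obstacle is the degenerate stratum where $\Phi_{n+1}(\Lambda) = 0$ for every $\Lambda \in \widehat{\mathcal{S}}$, i.e.\ where the conditioning event $H_{n+1}$ has upper probability zero relative to $\mathcal{H}_{n+1}$. There the ratio formula collapses and $z$ is no longer pinned by the leading system, so coherence must be decided by the recursive condition of Theorem~\ref{COER-P0} (or Theorem~\ref{COER-P0'}) applied to the subassessment associated with $I_0$. I expect to treat this by induction on the number $n+1$ of conditional events: in the degenerate case the index $n+1$ enters $I_0$, and the coherent interval for $z$ is then inherited from the strictly smaller subfamily, for which the statement holds by the inductive hypothesis. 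Verifying that the regular and degenerate regimes glue into a single closed interval $[z',z'']$, and that the inductive bookkeeping of which conditioning events acquire zero upper probability remains consistent, is where the real work lies.
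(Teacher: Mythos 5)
The paper does not actually prove Theorem~\ref{THM:FUND}: it recalls it as a known result from the cited literature and presents Algorithm~\ref{Alg} as its computational counterpart. So your proposal can only be measured against the standard coherence argument that the paper's machinery (Theorems~\ref{CNES}, \ref{COER-P0}, \ref{COER-P0'} and Algorithm~\ref{Alg}) encodes, and on that measure your outline follows exactly the right route: the regular-regime analysis --- writing $z$ as a linear-fractional function $N(\Lambda)/D(\Lambda)$ with $D(\Lambda)=\Phi_{n+1}(\Lambda)$ over the polytope cut out by the premise constraints, getting an interval from connectedness of the convex domain, and getting attainment of the endpoints via the Charnes--Cooper normalization $\sum_{C_r\subseteq H_{n+1}}\lambda_r=1$ --- is precisely Step~2 of Algorithm~\ref{Alg}, and it is sound.

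There are, however, two gaps. The minor one: even in the regular regime, membership of $z$ in the image of the linear-fractional map only gives condition (i) of Theorem~\ref{COER-P0} (solvability of the enlarged system); you still owe condition (ii). This closes quickly --- if some solution $\Lambda$ has $\Phi_{n+1}(\Lambda)>0$, take $\mathcal{S}'=\{\Lambda\}$ in Theorem~\ref{COER-P0'}, note $n+1\notin I_0'$, and observe that $\P_0'$ is then a subvector of the coherent $\P$, hence coherent --- but you do not say it. The major one is the part you explicitly defer as ``where the real work lies,'' and that is not a detail: it is the entire content separating the conditional theorem from de Finetti's unconditional one. When every solution of the enlarged system forces $\Phi_{n+1}(\Lambda)=0$, the equation $N(\Lambda)=zD(\Lambda)$ is satisfied vacuously for \emph{every} $z$, so solvability no longer discriminates, and coherence of $(\P,z)$ is decided entirely by the recursive subproblem on the family indexed by $I_0$. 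You must then (a) set up the induction so that the subfamily is strictly smaller and the inductive hypothesis delivers a closed interval for $z$ there, and (b) prove that the set of coherent $z$ produced by the regular regime and the set produced by the recursion together form a single closed interval rather than, say, a disjoint union. Neither step is carried out, so closedness and connectedness of the full coherent set $\{z:(\P,z)\text{ coherent}\}$ --- which is the assertion of the theorem --- are not established. As it stands the proposal is an accurate and well-informed roadmap, not a proof.
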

Theorem \ref{THM:FUND}   states
that
a coherent assessment of premises  can always be coherently extended to a conclusion, specifically there always exists an interval $[z',z'']\subseteq [0,1]$ of all coherent extensions on the  conclusion.  
A non informative or illusory restriction is obtained when 
$[z',z'']=[0,1]$. The extension is unique when  $z'=z''$. For applying Theorem~\ref{THM:FUND},  we now recall an algorithm (see Algorithm~1 in \cite{gilio16}, which is originally based on Algorithm~2 in \cite{biazzo00}) which allows for  computing the lower and upper bounds $z'$ and $z''$ of the  interval  of all coherent extensions  on  $E_{n+1} |H_{n+1}$. 
\begin{algorithm}\label{Alg}{\rm
		Let  $\F=(E_1|H_1,\ldots ,E_n|H_n)$ be  a sequence of conditional events and $\mathcal{P}=(p_1,\ldots,p_n)$ be a coherent precise probability assessment    
		on  $\F$, where  $p_j=p(E_j|H_j)\in[0,1]$,  $j=1,\dots,n$. 
		Moreover, let $E_{n+1}|H_{n+1}$ be  a further conditional event  and denote by $J_{n+1}$ the set
		$\{1, \ldots, n+1\}$. The steps below describe the computation of the lower bound $z'$ (resp., the upper bound $z''$) for the coherent extensions $z=p(E_{n+1}|H_{n+1})$.
		\begin{itemize}
			\item {\bf Step 0.} Expand the expression	$	\bigwedge_{j \in J_{n+1}} \left( E_j H_j \vee \widebar{E}_j H_j \vee \widebar{H}_j \right)$
			and denote by $C_1, \ldots, C_m$ the constituents contained in $\mathcal{H}_{n+1} =
			\bigvee_{j \in J_{n+1}} H_j$ associated with $(\F,E_{n+1}|H_{n+1})$. Then, construct the following system in
			the unknowns $\lambda_1, \ldots, \lambda_m, z$
			\begin{equation}\label{S_{n+1}}
			\begin{array}{l}
			\left\{
			\begin{array}{l}
			\sum_{r: C_r \subseteq E_{n+1}H_{n+1}}\lambda_r = z
			\sum_{r: C_r \subseteq H_{n+1}}\lambda_r \; ; \\
			\sum_{r: C_r \subseteq E_{j}H_{j}}\lambda_r = p_{j}
			\sum_{r: C_r \subseteq H_{j}}\lambda_r, \;  \; j \in J_n\;; \\
			\sum_{r\in J_m}\lambda_r = 1;\;\;
			\lambda_r \geq 0 , \; r\in J_m\;.
			\end{array}
			\right.
			\end{array}
			\end{equation}
			
			\item {\bf Step 1.}  Check the solvability of system
			(\ref{S_{n+1}})
			under the condition ${z = 0}$ (resp., ${z = 1}$).
			If it is not solvable, go to Step 2; otherwise,
			go to Step 3.
			\item {\bf Step 2.} Solve the following linear programming problem
			\[
			\mbox{Compute}: \hspace{1 cm}
			\gamma' \; = \; \min  \; \sum_{r: C_r \subseteq E_{n+1}H_{n+1}}\lambda_r
			\]
			\[
			\mbox{(respectively :} \hspace{0.7 cm}
			\gamma'' \; = \; \max  \;
			\sum_{r: C_r \subseteq E_{n+1}H_{n+1}} \lambda_r \; )
			\]
			subject to:
			\[
			\left\{
			\begin{array}{l}
			\sum_{r: C_r \subseteq E_{j}H_{j}}\lambda_r \;
			= \; p_j\sum_{r: C_r \subseteq H_{j}}\lambda_r, \; j \in J_n\; ; \\
			\sum_{r: C_r \subseteq H_{n+1}}\lambda_r = 1 ; \;
			\lambda_r \geq 0, \; r \in J_m\;.
			\end{array}
			\right.
			\]
			The minimum $\gamma'$ (respectively the maximum  $\gamma''$)
			of the  {\em objective function} coincides with $z'$ (respectively with
			$z''$) and the procedure stops.
			\item {\bf Step 3.} For each subscript $j\in J_{n+1}$, compute the maximum $M_j$ of the
			function $\Phi_j=\sum_{r: C_r\subseteq H_j}\lambda_r$, subject to the constraints given by the system
			(\ref{S_{n+1}}) with $z = 0$ (respectively $z = 1$). 
			We have the following three cases:
			\begin{enumerate}
				\item $M_{n+1} > 0\;$;
				\item $M_{n+1} = 0 \; , \; M_j > 0 \;$ for every $ \;j \neq n+1\;$;
				\item $M_j = 0 \;$ for $\; j \in I_0 = J \cup \{n+1\} \;$, with
				$\; J \neq \emptyset\;$.
			\end{enumerate}
			In the first two cases $z' = 0$ (respectively $z'' = 1$) and the
			procedure stops. \\ In the third case, defining $I_0 = J \cup \{n+1\}, \;$ set
			$J_{n+1} = I_0$ and $(\F,\P) = (\F_J,\P_J)$, where $\F_J=(E_i|H_i:i\in J)$ and $\P_J=(p_i:i\in J)$. Then, go to Step 0.
		\end{itemize}
}\end{algorithm}
The procedure ends in a finite number of cycles by computing the value
$z'$ (respectively $z''$).
\begin{remark}\label{REM1}
Assuming $(\P,z)$ on $(\F, E_{n+1}|H_{n+1})$  coherent, each solution $\Lambda=(\lambda_1,\ldots,\lambda_m)$ of System (\ref{S_{n+1}}) is a coherent extension of the assessment $(\P,z)$  to the sequence $(C_1|\mathcal{H}_{n+1},\ldots,C_m|\mathcal{H}_{n+1})$.
\end{remark}
For a software implementation of an algorithm based on  \cite{Capotorti02,coletti02}, which is similar to Algorithm \ref{Alg}, see the {\em Check-Coherence software} (\cite{capotorti16}).
\paragraph{Imprecise probability}
Now, we recall the notion of an imprecise (probability) assessment in set-valued terms.
\begin{definition}\label{DEF:IA}
An \emph{imprecise, or set-valued, assessment} $\I$ on a finite sequence of $n$  conditional events   $\F$  is a (possibly empty) set of precise assessments $\P$ on $\F$.
\end{definition}
Definition~\ref{DEF:IA},  introduced in  \cite{gilio98}, states that  an \emph{imprecise (probability) assessment} $\I$ on   a finite sequence $\F$ of $n$ conditional events is just a (possibly empty)  subset of $[0,1]^n$. 
If an imprecise assessment $\I$ on $\F$, with  
$\I=\I_1\times \cdots \times \I_n$, where $\I_i\subseteq [0,1]$, $i=1,\ldots,n$, then  
 $\I$  on $\F$ can be formulated in terms of  constraints on the probability
 of the single events in $\F$, i.e.,
\begin{equation}
(p(E_1|H_1)\in \I_1,\ldots, p(E_n|H_n)\in \I_n).
\end{equation}
We recall   the notions of g-coherence and total-coherence  for  imprecise (in the sense of set-valued) probability assessments
(\cite{gilio98,gilio16}).
\begin{definition}\label{DEF:GCOH}
Let  a sequence  of $n$ conditional events  $\F$ be given. An imprecise assessment $\I\subseteq [0,1]^n$  on $\F$ is  \emph{g-coherent}  if and only if there exists  a coherent precise assessment  $\P$  on $\F$ such that $\P\in \I$.
\end{definition}

\begin{definition}
An imprecise assessment $\I$  on $\F$  is  \emph{totally coherent} (t-coherent) if and only if the following two conditions are satisfied: (i) $\I$ is non-empty;   (ii) if $\P\in \I$, then $\P$ is a coherent precise assessment on $\F$.
\end{definition}
We denote by $\Pi$ the set of \emph{all  coherent precise} assessments on $\F$.
We recall that if there are no logical relations among the events $E_1,H_1,\ldots, E_n,H_n$ involved in $\F$, that is   $E_1,H_1,\ldots, E_n,H_n$ are logically independent, then the set $\Pi$ associated with $\F$ is the whole unit hypercube $[0,1]^n$. 
If there are  logical relations, then  the set $\Pi$ \emph{could be} a strict subset of  $[0,1]^n$. As it is well known  $\Pi\neq \emptyset$; therefore, 
$\emptyset\neq \Pi\subseteq [0,1]^n$.
\begin{remark}
We observe that:
\[
\begin{array}{l}\label{REM:GTCOHERENCE}
\I \mbox{ is g-coherent }\; \Longleftrightarrow \; \Pi \cap \I \neq \emptyset

\\
\I \mbox{ is t-coherent } \;\Longleftrightarrow \; \emptyset \neq  \Pi \cap \I =\I\,.
\end{array}
\]
Then: $\I$ is t-coherent $\Rightarrow \I$ is  g-coherent.
\end{remark}
\begin{definition}\label{DEF:GCOHEXT}
 Let $\I$ be a g-coherent assessment on   $\F = (E_1|H_1,\ldots ,E_n|H_n)$; moreover, let $E_{n+1}|H_{n+1}$ be a further conditional event and let $\mathcal{J}$ be an extension of $\I$  to $(\F ,E_{n+1}|H_{n+1})$. We say that $\mathcal{J}$  is a \emph{g-coherent extension} of $\I$ if and only if $\mathcal{J}$ is g-coherent.
\end{definition}
Given a g-coherent assessment $\I$ on  a sequence of $n$ conditional events $\F$, for each coherent precise assessment  $\P$ on $\F$, with $\P\in \I$, we denote by $[\alpha_{\P},\beta_{\P}]$ the interval of coherent extensions of $\P$  to $E_{n+1}|H_{n+1}$; that is, the assessment  $(\P,z)$ on $(\F,E_{n+1}|H_{n+1})$ is coherent if and only if
$z\in [z'_{\P},z''_{\P}]$. Then, defining the set 
\begin{equation} \label{EQ:UEXT}
\begin{array}{ll}
\Sigma=\bigcup_{\P\in\Pi \cap \I}[z'_{\P},z''_{\P}]\,,
\end{array}
\end{equation}
for every $z\in \Sigma$, the assessment $\I\times \{z\}$ is a g-coherent extension of $\I$ to $(\F,E_{n+1}|H_{n+1})$; moreover, 
 for every $z\in [0,1]\setminus \Sigma$, the extension $\I\times \{z\}$  of $\I$ to $(\F,E_{n+1}|H_{n+1})$ is  not g-coherent. 
 We say that $\Sigma$ is the \emph{set of (all) coherent extensions}  of the imprecise assessment  $\I$ on $\F$ to the  conditional event $E_{n+1}|H_{n+1}$. Of course, as $\I$ is g-coherent,  $\Sigma \neq \emptyset$. 
\paragraph{Coherence and penalty criterion}
We recall that  de Finetti (\cite{deFi62,deFi64,definetti74})  introduced the notion of coherence (for the case of unconditional events)
by means of a penalty criterion based on the Brier quadratic scoring rule (\cite{Brie50}), which is beyond the betting scheme (for a discussion on these two 
different  justifications of probabilistic accounts of belief see, e.g.,  \cite{Joyce98}).
De Finetti also gave a geometrical interpretation of coherence and  showed that the notion of coherence based on the betting scheme is equivalent to the notion  of coherence based on the penalty criterion   (\cite{definetti74}).
The relationship between the notions of coherence and of \emph{non-dominance}, with respect to  proper scoring rules, for  the case of unconditional events has been investigated by exploiting the Bregman divergence in 
\cite{predd09}.
For related work in terms of accuracy and credence functions see, e.g., \cite{pettigrew16}.
Coherence  
based on the penalty criterion has been extended to the case of conditional events in \cite{Gili90} (see also \cite{Gili96,GiSa22}) as follows. Let the assessment $\mathcal{P} =(p_1, \ldots, p_n)$ on $\mathcal{F}=\{E_1|H_1,\ldots,E_n|H_n\}$ be associated with a random loss  $\mathcal{L}=\sum_{i=1}^n H_i(E_i-p_i)^2$ (Brier score adapted to conditional events). 
Then, the value $L_h$ of the random loss $\mathcal{L}$ when the constituent $C_h$ is true is given by
\[L_h=\sum_{i=1}^n (q_{hi}-p_i)^2,
\]
where $Q_h=(q_{h1},\ldots,q_{hn})$ is the corresponding point associated with $C_h$. Of course, $L_0=\sum_{i=1}^n (p_i-p_i)^2=0$ is associated with the constituent $C_0=\no{H}_1\cdots \no{H}_n$, which means that the loss is zero when all the conditional events are void.  Then, the following definition of coherence can be given:
\begin{definition}\label{DEF:COER-CONDPENALTY}  A function $p$ defined on an arbitrary family of conditional events $\mathcal{K}$ is said to be {\em coherent} if and only if, for every integer $n$, for every subfamily $\mathcal{F}=\{E_1|H_1,\ldots,E_n|H_n\} \subseteq \mathcal{K}$,
		denoting by $\P=(p_1,\ldots,p_n)$  the restriction of $p$ to $\F$, by $\mathcal{L}=\sum_{i=1}^n H_i(E_i-p_i)^2$ the associated random loss,  
		there does not exist $\mathcal{P}^* =(p_1^*, \ldots, p_n^*)$ such that:
		$\mathcal{L}^*\leq \mathcal{L}$ and $\mathcal{L}^*\neq \mathcal{L}$, that is
		$L_h^* \leq L_h$, for every $h$, with $L_h^* < L_h$ for at least an index $h$.
	\end{definition}
In \cite{Gili90} it has been shown that   coherence based on  the betting scheme (Definition \ref{COER-BET}) and coherence based on the penalty criterion (Definition \ref{DEF:COER-CONDPENALTY}) are equivalent (see also \cite{Gili96,GiSa22}). In other words, a function  $p$ on $\K$ is coherent according to Definition \ref{COER-BET} if and only if $p$ on $\K$ is coherent according to Definition \ref{DEF:COER-CONDPENALTY}.
We also recall that, based on Bregman divergences,  coherence for conditional events can  be characterized in terms of  admissibility with respect to any given proper scoring rule.
More precisely, in 
\cite[Theorem 3]{GSisipta11} (see also \cite{BiGS12}) it is shown that, given any bounded (strictly) proper scoring rule $s$, a  probability assessment $p$ on a family of conditional events $\K$ is coherent if and only if it is \emph{admissible} with respect to $s$.
 
 \section{Validity and existential import.}
 \label{SEC:VALIDITY}
We define the validity of a probabilistic inference rule as follows:
\begin{definition}\label{DEF:VALID}
	Given a g-coherent assessment $\I$ on  a sequence of $n$ conditional events $\F$ and  a non-empty imprecise assessment $\I'$ on a conditional event $E_{n+1}|H_{n+1}$, we say that the (probabilistic) inference 
	\[
	\mbox{from }  \I \mbox{ on } \F \mbox{ infer } \I' \mbox{ on } E_{n+1}|H_{n+1}
	\]
	is \emph{valid} (denoted by $\models$) if and only if  $\Sigma\subseteq \I'$, where $\Sigma$ is the set of coherent extensions  of the imprecise assessment  $\I$ on $\F$.
	Moreover, we call a valid inference  \emph{strictly valid (s-valid, denoted by $\models_s$)} if and only if  $\I'=\Sigma$.
\end{definition}

\begin{remark}\label{REM:VALID}
Let $	\mbox{from }  \I \mbox{ on } \F \mbox{ infer } \I' \mbox{ on } E_{n+1}|H_{n+1}$ be a valid inference, let $\I_s$ be a g-coherent  subset of   $\I$, and let $\I_w$ be a supset of  $\I'$. 
Denoting by $\Sigma_s$ the set of coherent extensions of the imprecise assessment $\I_s$, we observe  that $\emptyset\neq \Sigma_s\subseteq \Sigma$.
Then, by Definition \ref{DEF:VALID}, the following inference is valid
	\[
	\mbox{from }  \I_s \mbox{ on } \F \mbox{ infer } \I_w' \mbox{ on } E_{n+1}|H_{n+1}.
	\]
Thus, by starting from a valid inference  we obtain valid inferences if the premises are strengthened or the conclusion is weakened. 
\end{remark}  
In the next remark we explain how Adams' notion of p-validity (\cite{adams75}) is interpreted in the framework of coherence and how it relates to our notion of s-validity.
\begin{remark} \label{REM:PVALIDITY}We recall that a
finite sequence of conditional events
$\mathcal{F} = (E_{1}|H_{1},E_2|H_2 ,\ldots,E_n|H_n)$ is \emph{p-consistent}
if and only if the assessment $(1,1,\ldots,1)$ on $\mathcal{F}$ is coherent.
In addition, a p-consistent family $\mathcal{F}$ \emph{p-entails} a
conditional
event $E_{n+1}|H_{n+1}$ if and only if the unique coherent extension on $E_{n+1}|H_{n+1}$ of the
assessment $(1,1,\ldots,1)$ on $\mathcal{F}$ is $p(E_{n+1}|H_{n+1})=1$ (see, e.g., \cite{gilio13}). The inference from $\mathcal{F}$ to $E_{n+1}|H_{n+1}$ is 
\emph{p-valid} if and only if $\mathcal{F}$ p-entails $E_{n+1}|H_{n+1}$.  We observe that  p-valid inferences are   special cases of s-valid inferences, specifically  
when, in Definition \ref{DEF:VALID}, $\I=(1,1,\ldots,1)$ and $\I'=\{1\}$.
\end{remark}
\begin{definition}\label{DEF:CEI}
	The \emph{conditional event existential import assumption}  is defined by assuming that the conditional probability of the conditioning event of the minor  premise of a syllogism given the disjunction of all conditioning events of the syllogism is positive. 
\end{definition}

For Datisi, the conditional event existential import assumption is 
${p(M|(S \vee M))>0}$, which makes Datisi  probabilistically informative:
\begin{eqnarray*}
	\mbox{(Datisi)} & p(P|M)=1,\; p(S|M)>0,\mbox{ and } p(M|S \vee M)>0 \Longrightarrow  p(P|S)> 0\,.
\end{eqnarray*}

We will also consider the following   \begin{definition}\label{DEF:EI}
	The \emph{unconditional event existential import assumption}  is defined by assuming that the  probability of the conditioning event of the minor  premise  is positive. 
\end{definition}
For example, for Datisi, the unconditional event existential import assumption is 
${p(M)>0}$.
In the next remark we observe that Definition~\ref{DEF:EI} is stronger than Definition~\ref{DEF:CEI}, and hence for Datisi it means that  $p(M)>0$ implies that $p(M|S\vee M)>0$ (but not \emph{vice versa}).
\begin{remark}\label{REM:EI>CEI}
	Let $H_1,H_2$, and $H_3$ (where some of them may coincide)
	denote the conditioning event of the major premise,  the minor premise, and of the conclusion, respectively.
	Then, the unconditional event existential import assumption is $p(H_2)>0$ while 
	the conditional event existential import assumption is $p(H_2|(H_1\vee H_2 \vee H_3))>0$. 
	We observe that in general $p(H_2)=p(H_2\wedge (H_1\vee H_2 \vee H_3))=p(H_2|(H_1\vee H_2 \vee H_3))p(H_1\vee H_2 \vee H_3)$. Then,
	\begin{equation}\label{EQ:EISTRENGHT}
	p(H_2)>0 \Longrightarrow p(H_2|(H_1\vee H_2 \vee H_3))>0.
	\end{equation}
	However, the converse of (\ref{EQ:EISTRENGHT})  does not hold. Indeed,   in the framework of coherence it could be that $p(H_2)=0$ even if $p(H_2|(H_1\vee H_2 \vee H_3))>0$, because 
	$p(H_2|(H_1\vee H_2 \vee H_3))>0$, $p(H_1\vee H_2 \vee H_3)=0$, and  $p(H_2)=0$ is coherent. 
	Therefore,   Definition~\ref{DEF:EI} is stronger than Definition~\ref{DEF:CEI}. 
\end{remark}
\begin{remark}
We are aware that it is not straightforward to find a natural language pendent to our conditional event existential import assumption, while the stronger unconditional event existential import assumption can be seen as a reading of the assumption that the subject term $S$ must not be empty. 
However, we recall that in our approach we can have that $p(S|S\vee P)>0$ even if $p(S)=0$ (Remark \ref{REM:EI>CEI}). This situation cannot be represented in purely logical terms and hence there is no corresponding interpretation of the weaker existential import assumption. Moreover, we will see that, in order to prove the validity of the traditionally valid syllogisms, it is sufficient to use the conditional event existential import assumption.
We also recall that it is traditionally  at least tacitly assumed that the subject term must not be empty in Aristotelian syllogistics (for example, in his overview on Aristotle's logic, Smith claims that ``Aristotle in effect supposes that \emph{all} terms in syllogisms are non-empty'' \cite{sep-aristotle-logic}). However, we note that there are also  arguments that challenge this view, i.e.,  Aristotle in fact ``places no requirement that the terms be non-empty'' \cite[p. 543]{read15}.  
 We leave the question of whether our interpretation of the our existential import comes closer to Aristotle than  stronger ones to historians of logic.
\end{remark}

\begin{remark}
We define the  conditional event existential import assumption by considering the conditioning events of all the conditional events involved in the premises and the conclusion. Single syllogistic  sentences are interpreted by  suitable probability assessments on a corresponding  conditional events. For example, \emph{Every $S$ is $P$} by $p(P|S)=1$.  The corresponding conditional event existential import is $p(S|S)>0$. For all $S\neq \bot$ our existential import  is always satisfied because coherence requires that $p(S|S)$ must be 1, even if $p(S)=0$. 
Indeed, it can easily be proved that the assessment $(x,y)$ on $(S,S|S)$, with $S\neq \bot$, is coherent if and only if $x\in[0,1]$ and $y=1$. We also notice that in this case the equation $p(S)=p(S\wedge S)=p(S|S)p(S)$ is always satisfied. 
Sentences where the conditioning event $S$ is a contradiction, i.e. $S=\bot$, are not considered because the corresponding conditional event is undefined in our approach.
\end{remark}

\section{Figure I.}
\label{SEC:WT}
In this section,
we prove that the probabilistic inference of $C|A$ from the premise set $(C|B, B|A)$, which corresponds to the transitive structure of the general form of syllogisms of Figure I, is probabilistically non-informative. Specifically, we prove that the imprecise assessment $[0,1]^3$ on $(C|B, B|A,C|A)$ is t-coherent. 
This t-coherence implies that: $(i)$ the assessment $[0,1]^2$ on $(C|B,B|A)$ is  t-coherent, which means that  any  assessment $(x,y)\in [0,1]^2$  on the premise set $(C|B,B|A)$ is coherent;
$(ii)$ the degree of belief in the conclusion $C|A$ is not constrained by the degrees of belief in the premises, since any value $z\in[0,1]$ is a coherent extension of a given pair $(x,y)$ on $(C|B,B|A)$.
Then, in order to obtain probabilistic informativeness, we add the probabilistic constraint $p(B|(A \vee  B))>0$ to the premise set.
This constraint serves as  the conditional event existential import assumption of syllogisms of Figure I according to Definition \ref{DEF:CEI}. 
We show that the imprecise assessment  $[0,1]^3$ on  $(C|B, B|A,B|(A \vee  B))$ is t-coherent. Then, we recall  the precise and imprecise probability propagation rules for the inference from $(C|B,A|B, B|(A\vee B))$ to  $C|A$.
We apply these results  in Section~\ref{SEC:FIGISYLLO}, where we  study the valid syllogisms of Figure I.
Contrary to first order monadic predicate logic, which requires existential import assumptions for Barbari and Celaront only (see Table \ref{TAB:AristSyl}), our probabilistic existential import assumption is required for all valid syllogisms of Figure I. 
\subsection{Coherence and  probability propagation   in Figure I.}
We now prove the t-coherence of the  imprecise assessment  $[0,1]^3$ on the sequence of conditional events involved in our probabilistic interpretation of syllogisms of Figure I.
\begin{proposition}\label{PROP:FIGUREI}
Let $A,B,C$ be logically independent events.
The imprecise assessment $[0,1]^3$ on $\mathcal{F}=(C|B,B|A,C|A)$ is t-coherent.
\end{proposition}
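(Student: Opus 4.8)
The plan is to use the characterization of t-coherence recorded just after Definition~\ref{DEF:GCOH}: since $\I=[0,1]^3$ is trivially non-empty, what remains is condition (ii), i.e. to prove that \emph{every} precise assessment $\P=(x,y,z)\in[0,1]^3$ on $\F=(C|B,B|A,C|A)$ is coherent, equivalently that $\Pi=[0,1]^3$. I would establish this for an arbitrary triple $(x,y,z)$ by verifying the two conditions of Theorem~\ref{COER-P0}.

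First I would set up the constituents. Because $A,B,C$ are logically independent and the conditioning events of $\F$ are $B,A,A$, one has $\mathcal{H}_3=A\vee B$, and the constituents contained in $\mathcal{H}_3$ are the six atoms $C_1=ABC$, $C_2=AB\no{C}$, $C_3=A\no{B}C$, $C_4=A\no{B}\,\no{C}$, $C_5=\no{A}BC$, $C_6=\no{A}B\no{C}$. Ordering the family as $(C|B,B|A,C|A)$, the associated points are $Q_1=(1,1,1)$, $Q_2=(0,1,0)$, $Q_3=(x,0,1)$, $Q_4=(x,0,0)$, $Q_5=(1,y,z)$, $Q_6=(0,y,z)$, where the void entries carry the assessed values $x,y,z$.

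For condition (i), solvability of $(\Systemsigma)$, I would exhibit the solution that concentrates all mass on the two $\no{A}B$ constituents, namely $\lambda_5=x$, $\lambda_6=1-x$ and $\lambda_h=0$ otherwise. A direct substitution shows it satisfies the three equations $\sum_h q_{hj}\lambda_h=p_j$ together with $\sum_h\lambda_h=1$ and $\lambda_h\ge 0$ for \emph{every} $(x,y,z)\in[0,1]^3$, so $(\Systemsigma)$ is always solvable. The substantive part is condition (ii). Since $H_1=B$ while $H_2=H_3=A$, we have $\Phi_2\equiv\Phi_3$ and hence $M_2=M_3$; moreover the solution above already yields $\Phi_1=\lambda_5+\lambda_6=1$, so $M_1=1>0$ and $1\notin I_0$. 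Thus either $I_0=\emptyset$, in which case coherence is immediate, or $I_0=\{2,3\}$, whose associated pair is $(\F_0,\P_0)=((B|A,C|A),(y,z))$. I would then verify this reduced pair is coherent for all $(y,z)\in[0,1]^2$: conditioning on $A$ leaves the four constituents $ABC,AB\no{C},A\no{B}C,A\no{B}\,\no{C}$, whose points are exactly the vertices $(1,1),(1,0),(0,1),(0,0)$ of $[0,1]^2$, so $(y,z)$ lies in their convex hull (solvability), while the common conditioning event $A$ absorbs the whole reduced mass, giving $\Phi=1>0$ and hence an empty $I_0'$. By Theorem~\ref{COER-P0} the pair $(y,z)$ is coherent, and a second application of the theorem gives coherence of $(x,y,z)$, completing condition (ii).

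The main obstacle is precisely condition (ii), and it is instructive to see why no shortcut via an ordinary probability measure works: a genuine measure with $p(A)>0$ and $p(B)>0$ realizing $(x,y,z)$ fails to exist for some triples — for instance when $y=1$ the forced inclusion $A\subseteq B$ clashes with $p(C|B)\neq p(C|A)$ — so coherence cannot be proved by a single explicit distribution. The point where the argument really turns is the recognition that the shared conditioning event $A$ of the two premises may be coherently assigned probability zero, and that on this null event the logical independence of $B$ and $C$ still permits $p(B|A)=y$ and $p(C|A)=z$ to be freely and coherently specified. This is exactly the situation the coherence framework is designed for, where conditional probability is a primitive notion rather than a ratio, and it is what makes Theorem~\ref{COER-P0}, rather than a direct construction, the natural tool.
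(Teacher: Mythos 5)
Your proposal is correct and follows essentially the same route as the paper's proof: the same constituents and points, the same explicit solution $\Lambda=(0,0,0,0,x,1-x)$ witnessing solvability, and the same reduction to the sub-assessment $(y,z)$ on $(B|A,C|A)$, whose coherence is checked identically. The only cosmetic difference is that you invoke Theorem~\ref{COER-P0} with a short case analysis on $I_0$ (using $M_2=M_3$ since $H_2=H_3=A$), whereas the paper applies Theorem~\ref{COER-P0'} to the singleton solution set $\mathcal{S}'=\{(0,0,0,0,x,1-x)\}$ to get $I_0'=\{2,3\}$ directly; both are valid.
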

\begin{proof}
 Let $\P=(x,y,t)\in[0,1]^3$ be  any precise assessment  on $\mathcal{F}$.
	The constituents $C_h$ and  the  points $Q_h$ associated with   $(\F,\P)$ are
given in Table \ref{TAB:TABLE_FIGUREI}. 
\begin{table}[!h]
  \begin{minipage}{\textwidth}\centering 
	\caption{Constituents $C_h$ and  points $Q_h$ associated with  the probability  assessment    $\mathcal{P}=(x,y,z)$  on 
		$(C|B,B|A,C|A)$ involved in Figure I.}
	\label{TAB:TABLE_FIGUREI}
	\begin{tabular}{llll}      \hline
		& $C_h$            & $Q_h$                          &  \\
		\hline
		$C_1$ \, & $ABC$    \,       & $(1,1,1)$         \,           & $Q_1$   \\
		$C_2$ & $AB\no{C}$      & $(0,1,0)$                    & $Q_2$   \\
		$C_3$ & $A\no{B}C$       & $(x,0,1)$                    & $Q_3$   \\
		$C_4$ & $A\no{B}\no{C}$      & $(x,0,0)$                    & $Q_4$   \\
		$C_5$ & $\no{A}BC$ & $(1,y,z)$                    & $Q_5$   \\
		$C_6$ & $\no{A}B\no{C}$  & $(0,y,z)$                  & $Q_6$   \\
		$C_0$ & $\no{A} \, \no{B}$   &$(x,y,z)$                       & $Q_0=\mathcal{P} $ \\ \hline
	\end{tabular}     
\end{minipage}
\end{table}
By Theorem \ref{COER-P0}, coherence of  $\P$ on $\F$ requires that the following system
\[\begin{array}{l}
(\Systemsigma)  \hspace{1 cm}
\mathcal{P}=\sum_{h=1}^6 \lambda_hQ_h,\;
\sum_{h=1}^6 \lambda_h=1,\; \lambda_h\geq 0,\, h=1,\ldots,6.
\end{array}
\]
is solvable. In geometrical terms, this  means that  
the condition $\P \in \mathfrak{I}$ is satisfied, where   $\mathfrak{I}$ is the convex hull of $Q_1, \ldots, Q_6$. We observe that 
$\P=xQ_5+(1-x)Q_6$, indeed it holds that $(x,y,z)=x(1,y,z)+(1-x)(0,y,z)$.  Thus,  system 	$(\Systemsigma)  $ is solvable and a solution is
$\Lambda=(\lambda_1,\ldots,\lambda_6)=(0,0,0,0,x,1-x)$. 
From  (\ref{EQ:PHI}) we obtain that
$\Phi_{1}(\Lambda)=\sum_{h:C_h\subseteq B}\lambda_h=\lambda_1+\lambda_2+\lambda_5+\lambda_6=x+1-x=1$, 
 $\Phi_{2}(\Lambda)=\Phi_{3}(\Lambda)=\sum_{h:C_h\subseteq A}\lambda_h=
\lambda_1+\lambda_2+\lambda_3+\lambda_4=0$. Let $\mathcal{S}'=\{(0,0,0,0,x,1-x)\}$ denote a subset of the set $\mathcal{S}$ of all solutions of $(\Systemsigma) $.   Then, ${M_1'=\max\{\Phi_{1}:\Lambda\in\mathcal{S}'\}>0}$ and hence $I_0'=\{2,3\}$ (as defined in (\ref{EQ:I0'})).
 By Theorem \ref{COER-P0'},
 as $(\Systemsigma) $ is solvable and $I_0'=\{2,3\}$,
  it is sufficient to check the coherence of 
 the sub-assessment  $\P_0'=(y,z)$ on $\F_0'=(B|A,C|A)$ in order to check the coherence of $(x,y,z)$.
The constituents $C_h$ associated with the new pair $((B|A,C|A),(y,z))$ contained in  $\mathcal{H}_2=A$ are $C_1=ABC, C_2=AB\no{C}, C_3=A\no{B}C, C_4=A\no{B}\no{C}$ and the corresponding points $Q_h$ are $Q_1=(1,1), Q_2=(1,0), Q_3=(0,1), Q_4=(0,0)$.
The convex hull $\mathfrak{I}$ of the points $Q_1,Q_2,Q_3,Q_4$ is the unit square  $[0,1]^2$. 
Then $(y,z)\in[0,1]^2$ trivially belongs to $\mathfrak{I}$ and hence the system 
\[
(\Systemsigma) : \;\;
(y,z)=\lambda_1Q_1+\lambda_2Q_2+\lambda_3Q_3+\lambda_4Q_4,\;\; \lambda_1+\lambda_2+\lambda_3+\lambda_4=1,\;\; \lambda_h\geq 0,
\]
is solvable.
Moreover, as $\Phi_{1}(\Lambda)=\Phi_{2}(\Lambda)=\sum_{h:C_h\subseteq A}\lambda_h=\lambda_1+\lambda_2+\lambda_3+\lambda_4=1$, for every solution $\Lambda$ of $(\Systemsigma) $, it follows that (the new) $I_0$ (as defined in (\ref{EQ:I0})) is empty and,  by Theorem \ref{COER-P0}, $(y,z)$ is coherent.  Then, $\P=(x,y,z)$ is coherent. Therefore, as any precise probability assessment
$\P=(x,y,t)\in[0,1]^3$  on $\mathcal{F}$ is coherent, it follows that the imprecise assessment $\I=[0,1]^3$ on $\mathcal{F}$ is t-coherent.
\end{proof}

We now prove the t-coherence of the  imprecise assessment  $[0,1]^3$ on the sequence of conditional events $(C|B,B|A,A|(A\vee B))$. This sequence is involved in our probabilistic interpretation of the premise set of Figure I and includes the conditional event used in our existential import assumption.
\begin{proposition}\label{PROP:FIGUREIBIS}
Let $A,B,C$ be logically independent events.
The imprecise assessment $[0,1]^3$ on $\mathcal{F}=(C|B,B|A,A|(A\vee B))$ is t-coherent.	
\end{proposition}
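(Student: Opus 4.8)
The plan is to mimic the proof of Proposition~\ref{PROP:FIGUREI}: fix an arbitrary precise assessment $\mathcal{P}=(x,y,z)\in[0,1]^3$ on $\F=(C|B,B|A,A|(A\vee B))$ and show that it is coherent via Theorem~\ref{COER-P0}; since $\mathcal{P}$ is arbitrary, t-coherence of $[0,1]^3$ then follows. The first step is to tabulate the constituents contained in $\mathcal{H}_3=B\vee A\vee(A\vee B)=A\vee B$ together with their points $Q_h$. With $A,B,C$ logically independent these are the six constituents $ABC,\,AB\no{C},\,A\no{B}C,\,A\no{B}\no{C},\,\no{A}BC,\,\no{A}B\no{C}$ inside $A\vee B$, plus $C_0=\no{A}\,\no{B}$. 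The associated points are $Q_1=(1,1,1)$, $Q_2=(0,1,1)$, $Q_3=Q_4=(x,0,1)$, $Q_5=(1,y,0)$, $Q_6=(0,y,0)$, and $Q_0=(x,y,z)=\mathcal{P}$. The only subtlety here is the third coordinate (for $A|(A\vee B)$), which equals $1$ on every constituent contained in $A$, equals $0$ on those contained in $\no{A}B$, and equals $z$ on $C_0$.

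Next I would establish solvability of $(\Systemsigma)$, i.e.\ $\mathcal{P}\in\I$ with $\I=\mathrm{conv}(Q_1,\dots,Q_6)$. Writing the coordinate equations for $\sum_h\lambda_hQ_h=\mathcal{P}$ together with $\sum_h\lambda_h=1$, the second- and third-coordinate equations force, for \emph{every} solution, $\lambda_1+\lambda_2=yz$, $\lambda_3+\lambda_4=z(1-y)$, and $\lambda_5+\lambda_6=1-z$, while the first-coordinate equation reduces to $\lambda_1+\lambda_5=x\,(1-z(1-y))$. The key algebraic observation is $1-z(1-y)=yz+(1-z)$, so the required value $x(1-z(1-y))$ always lies in $[0,\,yz+(1-z)]$, which is precisely the range of $\lambda_1+\lambda_5$ subject to the constraints; hence a nonnegative solution exists for every $(x,y,z)\in[0,1]^3$, and $(\Systemsigma)$ is solvable.

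Finally I would run the $I_0$ test of Theorem~\ref{COER-P0}. Because $H_3=A\vee B=\mathcal{H}_3$, every constituent lies in $H_3$, so $\Phi_3\equiv1$ and $3\notin I_0$. Moreover the sum relations above show that $\Phi_1=\lambda_1+\lambda_2+\lambda_5+\lambda_6=yz+(1-z)$ and $\Phi_2=\lambda_1+\lambda_2+\lambda_3+\lambda_4=z$ are \emph{constant} across the whole solution set, so $M_1=yz+1-z$ and $M_2=z$ can be read off directly. Thus $I_0=\emptyset$ except in two boundary situations: if $z=0$ then $M_2=0$ and $I_0=\{2\}$, whereas if $(z,y)=(1,0)$ then $M_1=0$ and $I_0=\{1\}$; these cannot hold simultaneously, so $I_0$ is never $\{1,2\}$. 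In each degenerate case the associated sub-assessment $(\F_0,\P_0)$ is a single conditional event, namely $p(C|B)=x$ or $p(B|A)=y$, which is trivially coherent, so condition (ii) of Theorem~\ref{COER-P0} holds; when $I_0=\emptyset$ condition (i) alone yields coherence. Hence every $\mathcal{P}\in[0,1]^3$ is coherent, and the imprecise assessment $[0,1]^3$ on $\F$ is t-coherent.

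The part I expect to require the most care is the $I_0$ analysis rather than the solvability: one must notice that $\Phi_1,\Phi_2$ are invariant over the entire solution set (so the maxima $M_1,M_2$ are immediate) and not overlook the two boundary assessments where $I_0\neq\emptyset$, where the reduction to a single conditional event is exactly what rescues coherence. An alternative that matches the style of Proposition~\ref{PROP:FIGUREI} more literally would fix one explicit solution as $\mathcal{S}'$ and invoke Theorem~\ref{COER-P0'}, but since $\Phi_1,\Phi_2$ are already constant, applying Theorem~\ref{COER-P0} to the full solution set is more direct.
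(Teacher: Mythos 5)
Your proof is correct and follows essentially the same route as the paper: the same constituent/point table, the same reduction of $(\Systemsigma)$ to the forced sums $\lambda_1+\lambda_2=yz$, $\lambda_3+\lambda_4=z(1-y)$, $\lambda_5+\lambda_6=1-z$, $\lambda_1+\lambda_5=x(1-z(1-y))$, and the same appeal to Theorem~\ref{COER-P0} with a reduction to a single (trivially coherent) conditional event when $I_0\neq\emptyset$. The only difference is cosmetic: you compute $M_1=yz+1-z$ and $M_2=z$ exactly and pin down the two boundary assessments where $I_0$ is nonempty, whereas the paper argues more abstractly that $I_0\subsetneq\{1,2\}$ and runs through the three possible cases; both are valid.
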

\begin{proof}
Let $\P=(x,y,t)\in[0,1]^3$ be a probability assessment on $\mathcal{F}$. 
	The constituents $C_h$ and  the  points $Q_h$ associated with   $(\F,\P)$ are
given in Table \ref{TAB:TABLE_FIGUREIBIS}. 
\begin{table}
  \begin{minipage}{\textwidth}\centering 
	\caption{Constituents $C_h$ and  points $Q_h$ associated with  the probability  assessment    $\mathcal{P}=(x,y,t)$  on 
		$(C|B,B|A,A|(A\vee B))$ involved in the premise set of  Figure I.
	}
	\label{TAB:TABLE_FIGUREIBIS}
	\begin{tabular}{llll}\hline
		& $C_h$            & $Q_h$                          &  \\
		\hline
		$C_1$ \, & $ABC$  \,         & $(1,1,1)$ \,                   & $Q_1$   \\
		$C_2$ & $AB\no{C}$      & $(0,1,1)$                    & $Q_2$   \\
		$C_3$ & $A\no{B}C$       & $(x,0,1)$                    & $Q_3$   \\
		$C_4$ & $A\no{B}\no{C}$      & $(x,0,1)$                    & $Q_4$   \\
		$C_5$ & $\no{A}BC$ & $(1,y,0)$                    & $Q_5$   \\
		$C_6$ & $\no{A}B\no{C}$  & $(0,y,0)$                  & $Q_6$   \\
		$C_0$ & $\no{A} \, \no{B}$   &$(x,y,t)$                       & $Q_0=\mathcal{P} $ \\\hline
	\end{tabular}
\end{minipage}
\end{table}
By Theorem \ref{COER-P0}, coherence of  $\P=(x,y,z)$ on $\F$ requires that the following system is solvable
\[\begin{array}{l}
(\Systemsigma)  \hspace{1 cm}
\mathcal{P}=\sum_{h=1}^6 \lambda_hQ_h,\;
\sum_{h=1}^6 \lambda_h=1,\; \lambda_h\geq 0,\, h=1,\ldots,6,
\end{array}
\]
that is
\begin{equation}\label{EQ:SIGMA_FIG1BIS}
\begin{array}{ccc}
\left\{
\begin{array}{lllllll}
\lambda_1+x\lambda_3+x\lambda_4+\lambda_5=x, \\
\lambda_1+\lambda_2+y\lambda_5+y\lambda_6=y,\\
\lambda_1+\lambda_2+\lambda_3+\lambda_4=t,\\
\lambda_1+\lambda_2+\lambda_3+\lambda_4+\lambda_5+\lambda_6=1, \;\;\\
\lambda_h\geq 0,\;\; h=1,\ldots,6\,.
\end{array}
\right.
\Longleftrightarrow
\left\{
\begin{array}{lllllll}
\lambda_1+\lambda_5=x(\lambda_1+\lambda_2+\lambda_5+\lambda_6), \\
\lambda_1+\lambda_2=y(\lambda_1+\lambda_2+\lambda_3+\lambda_4),\\
\lambda_1+\lambda_2+\lambda_3+\lambda_4=t,\\
\lambda_1+\lambda_2+\lambda_3+\lambda_4+\lambda_5+\lambda_6=1, \;\;\\
\lambda_h\geq 0,\;\; h=1,\ldots,6\,,
\end{array}
\right.
\end{array}
\end{equation}
or equivalently 
\begin{equation}\label{EQ:SIGMA_FIG1TRIS}
\begin{array}{ccc}
\left\{
\begin{array}{lllllll}
\lambda_1+\lambda_5=xyt+x(1-t), \\
\lambda_1+\lambda_2=yt,\\
\lambda_3+\lambda_4=1-yt,\\
\lambda_5+\lambda_6=1-t, \;\;\\
\lambda_h\geq 0,\;\; h=1,\ldots,6\,.
\end{array}
\right.
&\Longleftrightarrow&
    \left\{
\begin{array}{lllllll}
\lambda_5=xyt+x(1-t)-\lambda_1, \\
\lambda_2=yt-\lambda_1,\\
\lambda_3=t(1-y)-\lambda_4,\\
\lambda_6=(1-t)(1-x)-xyt+\lambda_1\\
\lambda_h\geq 0,\;\; h=1,\ldots,6\,.
\end{array}
\right.
\end{array}
\end{equation}
As $(x,y,t)\in[0,1]^3$ it holds  that
$\max\{0,xyt-(1-t)(1-x)\} \leq\min\{xyt+x(1-t),yt\}$.
Then, the System $(\Systemsigma) $ is solvable and the set of all solutions $\mathcal{S}$ is the set of vectors $\Lambda=(\lambda_1,\ldots,\lambda_6)$ such that
\[ 
    \left\{
\begin{array}{ll}
\max\{0,xyt-(1-t)(1-x)\}\leq \lambda_1\leq\min\{xyt+x(1-t),yt\},
\\
\lambda_2=yt-\lambda_1,\\
\lambda_3=t(1-y)-\lambda_4,\\
0\leq \lambda_4\leq t(1-y),\\
\lambda_5=xyt+x(1-t)-\lambda_1, \\
\lambda_6=(1-t)(1-x)-xyt+\lambda_1.\\
\end{array}
\right.
\]
For each conditional event $A$, $B$, and $A\vee B$ in $\F$  we associate the function $\Phi_1(\Lambda)=\sum_{h:C_h\subseteq A}\lambda_h=\lambda_1+\lambda_2+\lambda_3+\lambda_4$, $\Phi_2(\Lambda)=\sum_{h:C_h\subseteq B}\lambda_h=\lambda_1+\lambda_2+\lambda_5+\lambda_6$, and $\Phi_3(\Lambda)=\lambda_1+\ldots+\lambda_6$, respectively, as defined in (\ref{EQ:PHI}).
We observe that   $\Phi_{3}(\Lambda)=1>0$ for each solution $\Lambda \in\mathcal{S}$ and hence $M_3=\max\{\Phi_{3}:\Lambda\in\mathcal{S}\}>0$.
Then, concerning the strict subset $I_0$ of $\{1,2,3\}$ (defined in (\ref{EQ:I0})),
we obtain $I_0\subseteq\{1,2\}$.
Notice that $I_0$ cannot be  equal to $\{1,2\}$, because $\Phi_{3}(\Lambda)>0$  implies that at least  $\Phi_{1}(\Lambda)$ or  $\Phi_{2}(\Lambda)$ is positive, for each $\Lambda \in\mathcal{S}$. Then,  $M_1=\max\{\Phi_{1}:\Lambda\in\mathcal{S}\}$ and $M_2=\max\{\Phi_{2}:\Lambda\in\mathcal{S}\}$ cannot be  equal to zero and hence $I_0\subset\{1,2\}$.
Therefore, we distinguish the following three cases: $(i)$ $I_0=\emptyset$; $(ii)$ $I_0=\{1\}$; $(iii)$ $I_0=\{2\}$.

Case $(i)$. As $(\Systemsigma) $ is solvable, we obtain that  the assessment $\P=(x,y,t)$ is coherent
by Theorem \ref{COER-P0}.

Case $(ii)$. The assessment $\P_0=x\in[0,1]$ on  $\F_0=\{C|B\}$ is coherent because $B$ and $C$ are logically independent. Then,  as $(\Systemsigma) $ is solvable and $\P_0$ on $\F_0$ is coherent, we obtain   by Theorem \ref{COER-P0} that 
 the assessment $\P=(x,y,t)$ is coherent.
 
Case $(iii)$ is analogous to  Case $(ii)$, where $C$ and $B$ are replaced by $B$ and $A$, respectively.

 Therefore, the assessment  $\P=(x,y,t)\in[0,1]^3$ is coherent for every $(x,y,t)\in[0,1]^3$, that is the imprecise assessment $[0,1]^3$ on $\F$ is t-coherent.
\end{proof}

We recall the following probability propagation rule for the inference form: from $(C|B,B|A,A|(A\vee B))$ to $C|A$ (Theorem 3 in \cite{gilio16}).
\begin{theorem}\label{THM:PROPWTFIGI}
	Let $A,B,C$ be three logically independent events and $(x,y,t)\in[0,1]^3$ be any (coherent)  assessment on the sequence $(C|B,B|A,A|A\vee B)$. Then, the  extension $z=p(C|A)$ from $(x,y,t)$  is coherent if and only if $z\in[z',z'']$, where 
	\[
	\begin{array}{ll}
	[z',z''] =
	\left\{
	\begin{array}{ll}
	[0,1], & t=0;\\
	\left[\max\left\{0,xy-\frac{(1-t)(1-x)}{t}\right\}, \min\left\{1,(1-x)(1-y)+\frac{x}{t}\right\}\right] \,, & t>0\,.\\
	\end{array}
	\right.
	\end{array}
	\]
\end{theorem}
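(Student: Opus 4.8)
The plan is to apply Algorithm~\ref{Alg} to the sequence $\F=(C|B,B|A,A|(A\vee B))$ with the further conditional event $C|A$, reusing the constituents already computed for the premise set. Writing $E_1|H_1=C|B$, $E_2|H_2=B|A$, $E_3|H_3=A|(A\vee B)$ and $E_4|H_4=C|A$, the disjunction of conditioning events is $\mathcal{H}=A\vee B$, so the constituents contained in $\mathcal{H}$ are exactly the six atoms $C_1=ABC,\ C_2=AB\no{C},\ C_3=A\no{B}C,\ C_4=A\no{B}\no{C},\ C_5=\no{A}BC,\ C_6=\no{A}B\no{C}$ already listed in Table~\ref{TAB:TABLE_FIGUREIBIS}, now augmented by a fourth coordinate coming from $C|A$ (namely $1$ on $C_1,C_3$, then $0$ on $C_2,C_4$, and $z$ on $C_5,C_6$). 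By Proposition~\ref{PROP:FIGUREIBIS} the premise assessment $(x,y,t)$ is coherent for every $(x,y,t)\in[0,1]^3$, so by Theorem~\ref{THM:FUND} the coherent values of $z=P(C|A)$ form a closed interval $[z',z'']$; it remains only to locate its endpoints.

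First I would dispose of the degenerate case $t=0$. The constraint for $A|(A\vee B)$ forces $\lambda_1+\lambda_2+\lambda_3+\lambda_4=t=0$, so the conditioning event $A$ of the conclusion has zero upper probability relative to $\mathcal{H}$. This is the third case of Step~3 in Algorithm~\ref{Alg}: the procedure then reduces to the family $(B|A,C|A)$ carrying the assessment $(y,z)$, on which every $(y,z)\in[0,1]^2$ is coherent because $B$ and $C$ are logically independent (exactly as in the proof of Proposition~\ref{PROP:FIGUREI}). Hence $z$ is unconstrained and $[z',z'']=[0,1]$.

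For $t>0$ the conditioning event satisfies $\Phi_4(\Lambda)=\lambda_1+\lambda_2+\lambda_3+\lambda_4=t>0$, so the conclusion probability is genuinely determined by $z=(\lambda_1+\lambda_3)/t$ as $\Lambda$ ranges over the nonempty convex solution set $\mathcal{S}$ of the premise system. Using $\sum_h\lambda_h=1$ I would first reduce that system to the block constraints $\lambda_1+\lambda_2=yt$, $\lambda_3+\lambda_4=t(1-y)$, $\lambda_5+\lambda_6=1-t$, together with $\lambda_1+\lambda_5=xyt+x(1-t)$. Since $\lambda_1+\lambda_3$ is linear on the polytope $\mathcal{S}$, its image is an interval and $[z',z'']=[\min/t,\max/t]$. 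For the lower bound I would set $\lambda_3=0$ and push $\lambda_5$ to its maximum $1-t$ subject to $\lambda_1\geq 0$, getting $\min(\lambda_1+\lambda_3)=\max\{0,xyt-(1-x)(1-t)\}$ and hence $z'=\max\{0,xy-\frac{(1-t)(1-x)}{t}\}$. For the upper bound I would set $\lambda_4=0$ and push $\lambda_5$ to its minimum $0$ subject to $\lambda_1\leq yt$, getting $\max(\lambda_1+\lambda_3)=\min\{yt,\,xyt+x(1-t)\}+t(1-y)$ and, dividing by $t$, $z''=\min\{1,(1-x)(1-y)+\frac{x}{t}\}$.

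The routine but essential algebraic check is the identity $\tfrac{1}{t}\bigl(xyt+x(1-t)\bigr)+(1-y)=(1-x)(1-y)+\tfrac{x}{t}$, which shows that the active branch of the minimum defining $z''$ takes the claimed form and that the switch to the value $1$ occurs exactly when $(1-x)(1-y)+\tfrac{x}{t}=1$. The main thing to watch is feasibility: I must verify that the chosen corner values respect all non-negativity constraints simultaneously, in particular that $\lambda_6=(1-t)(1-x)-xyt+\lambda_1\geq 0$ at the optimizing vertices, so that the computed $\min$ and $\max$ are actually attained in $\mathcal{S}$; combined with connectedness of the linear image this yields the full interval rather than mere bounds. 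The only genuine subtlety beyond this bookkeeping is the degenerate $t=0$ branch, where one must invoke the Step~3 reduction of Algorithm~\ref{Alg} rather than its linear-programming step.
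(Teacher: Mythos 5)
Your proposal is correct: the constituent table, the reduction of the premise system to the block constraints, and the optimization of $(\lambda_1+\lambda_3)/t$ over the solution polytope all check out, and the degenerate $t=0$ case is handled via the Step~3 reduction exactly as required. The paper itself only recalls this theorem from the cited earlier work without proof, but your argument is essentially the same symbolic application of Algorithm~\ref{Alg} that the paper carries out in full for the analogous propagation rules of Figures II and III (Theorems \ref{THM:PROPF2} and \ref{THM:PROPF3}), so no further comparison is needed.
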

Theorem \ref{THM:PROPWTFIGI} 
has been generalized to the case  of interval-valued probability assessments, which results into the following imprecise probability propagation rule (Theorem 4  in \cite{gilio16}):
\begin{theorem}\label{THM:PROPWTIVFIGI}
		Let $A,B,C$ be three logically independent events and $\mathcal{I}=([x_1,x_2]\times [y_1,y_2]\times [t_1,t_2])\subseteq [0,1]^3$  be a (t-coherent) interval-valued probability   assessment on  $(C|B,B|A,A|A\vee B)$. Then, the set $\Sigma$ of the coherent extension of $\mathcal{I}$ is the interval  $[z^{*},z^{**}]$, where $[z^{*},z^{**}] =$
		\[
		\begin{array}{ll}
		\left\{
		\begin{array}{ll}
		[0,1], & t=0;\\
		\left[\max\left\{0,x_1y_1-\frac{(1-t_1)(1-x_1)}{t_1}\right\}, \min\left\{1,(1-x_2)(1-y_1)+\frac{x_2}{t_1}\right\}\right]
		\,, & t>0\,.\\
		\end{array}
		\right.
		\end{array}
		\]
	\end{theorem}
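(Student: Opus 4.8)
The plan is to derive the interval-valued rule from the pointwise rule of Theorem~\ref{THM:PROPWTFIGI} by taking a union over the box $\mathcal{I}$. First I would invoke the characterization of the set of coherent extensions in~(\ref{EQ:UEXT}): since $\mathcal{I}$ is t-coherent we have $\Pi\cap\mathcal{I}=\mathcal{I}$, so that
\[
\Sigma=\bigcup_{(x,y,t)\in\mathcal{I}}[z'(x,y,t),z''(x,y,t)],
\]
where $z'(x,y,t)$ and $z''(x,y,t)$ are the lower and upper bounds supplied by Theorem~\ref{THM:PROPWTFIGI}. The whole problem thus reduces to computing $\inf_{(x,y,t)\in\mathcal{I}}z'$ and $\sup_{(x,y,t)\in\mathcal{I}}z''$ and to checking that the resulting union has no gaps.

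For the case $t_1=0$ I would simply note that the point $(x_1,y_1,0)$ lies in $\mathcal{I}$ and that Theorem~\ref{THM:PROPWTFIGI} gives $[z'(x_1,y_1,0),z''(x_1,y_1,0)]=[0,1]$; hence $\Sigma=[0,1]$. For $t_1>0$ every $t\in[t_1,t_2]$ is positive, and I would use the explicit formulas. Writing $f(x,y,t)=xy-\frac{(1-t)(1-x)}{t}$, a direct monotonicity check shows $\partial f/\partial y=x\ge 0$, $\partial f/\partial x=y+\frac{1-t}{t}\ge 0$, and (rewriting $\frac{(1-t)(1-x)}{t}=(1-x)(\frac1t-1)$) that $f$ is nondecreasing in $t$; since $w\mapsto\max\{0,w\}$ is nondecreasing, $z'=\max\{0,f\}$ attains its minimum at the corner $(x_1,y_1,t_1)$, giving $z^{*}=\max\{0,x_1y_1-\frac{(1-t_1)(1-x_1)}{t_1}\}$. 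Dually, for $g(x,y,t)=(1-x)(1-y)+\frac{x}{t}$ one has $\partial g/\partial y=-(1-x)\le 0$, $g$ decreasing in $t$, and $\partial g/\partial x=\frac1t-(1-y)\ge 0$ because $t\le 1$ forces $\frac1t\ge 1\ge 1-y$; hence $z''=\min\{1,g\}$ attains its maximum at $(x_2,y_1,t_1)$, giving $z^{**}=\min\{1,(1-x_2)(1-y_1)+\frac{x_2}{t_1}\}$.

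It remains to show that $\Sigma$ is exactly the interval $[z^{*},z^{**}]$ and not merely a subset with possible holes. Here I would invoke continuity and connectedness: the box $\mathcal{I}$ is connected, the maps $z'$ and $z''$ are continuous on it (for $t_1>0$ the denominators do not vanish), and $z'\le z''$ throughout. Consequently the continuous image of $\mathcal{I}\times[0,1]$ under $(p,s)\mapsto(1-s)z'(p)+s\,z''(p)$ is exactly $\Sigma$, so $\Sigma$ is a connected subset of $[0,1]$, i.e.\ an interval. Since $\mathcal{I}$ is compact, the extremal values $z^{*}=\min z'$ and $z^{**}=\max z''$ are attained and therefore lie in $\Sigma$, whence $\Sigma=[z^{*},z^{**}]$.

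The main obstacle I anticipate is not the monotonicity bookkeeping (routine, though the sign of $\partial g/\partial x$ genuinely relies on $t\le 1$) but the gap-freeness argument, which deserves to be made explicit: the minimizing corner $(x_1,y_1,t_1)$ for $z'$ and the maximizing corner $(x_2,y_1,t_1)$ for $z''$ differ in their first coordinate, so it is not a priori evident that the union of the pointwise intervals is a single interval rather than a union with a hole. The connectedness argument above resolves this cleanly and is the step I would write out with the most care.
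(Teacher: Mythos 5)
Your proposal is correct and follows essentially the route the paper takes: the paper itself recalls Theorem~\ref{THM:PROPWTIVFIGI} from prior work without proof, but its proofs of the exactly analogous interval-valued results for Figures~II and~III (Theorems~\ref{THM:PROPWTIV} and~\ref{THM:PROPWTIVF3}) proceed just as you do, using total coherence to write $\Sigma=\bigcup_{\P\in\I}[z'_{\P},z''_{\P}]$ and then locating the extremal corner of the box by coordinatewise monotonicity of the bounds from the pointwise theorem. Your monotonicity computations check out (including the observation that $\partial g/\partial x\ge 0$ needs $t\le 1$), and your explicit connectedness argument for gap-freeness is a point the paper's analogous proofs simply assert when they write $\bigcup_{\P\in\I}[z'_{\P},z''_{\P}]=[z^{*},z^{**}]$, so you are if anything more careful than the source.
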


	\subsection{Traditionally valid syllogisms of Figure I.}
	\label{SEC:FIGISYLLO}
In this section we consider the probabilistic interpretation of the valid  syllogisms of Figure I (see Table~\ref{TAB:AristSyl}). 
Specifically, we firstly adapt the results on  Barbara, Barbari, and  Darii given in \cite{gilio16} applying the notions of (s-)validity. 
 Secondly, we prove s-validity of Celarent and Ferio and validity of Celaront. 
We use  the probabilistic interpretation of the basic syllogistic sentence types given in Table \ref{Table:BS}. By instantiating  in Proposition \ref{PROP:FIGUREI} 
the subject $S$, the middle $M$, and the predicate $P$ term for the events  $A,B,C$, respectively,  we observe that  the imprecise assessment $[0,1]^3$ on $(P|M,M|S,P|S)$ is t-coherent. This implies that all syllogisms of Figure I are probabilistically non-informative. For instance, modus Barbara (``\emph{Every $M$ is $P$, Every $S$ is $M$, therefore Every $S$ is $P$}'') without  existential import assumption corresponds to the probabilistically non-informative 
inference:  from the premises $p(P|M)=1$ and $p(M|S)=1$ infer  that every $p(P|S)\in[0,1]$ is coherent.
Indeed, by Proposition \ref{PROP:FIGUREI}, 
a  probability assessment $(1,1,z)$ on $(P|M,M|S,P|S)$ is coherent for every $z\in[0,1]$.  In order to construct  probabilistically informative versions of valid syllogisms  of Figure I, we add the conditional event  existential import assumption  to the  probabilistic interpretation of the respective premise set:  $p(S|(S \vee M))>0$ (see Definition \ref{DEF:CEI}). 
We now demonstrate the validity (and when possible the s-validity) of  traditionally valid syllogisms  by  suitable instantiations in Theorem~\ref{THM:PROPWTFIGI} within our semantics. 
Of course, some syllogisms will turn be equivalent when some terms are negated (and the corresponding probabilities are adjusted accordingly).  However, we provide for each syllogism within each figure a direct way of showing its validity by simply instantiating the respective probability propagation rule.  
\paragraph{Barbara} 
By instantiating $S,M,P$ in Theorem~\ref{THM:PROPWTFIGI} for $A,B,C$ with $x=y=1$ and any value $t>0$
it follows that  $z'=\max\left\{0,xy-\frac{(1-t)(1-x)}{t}\right\}=1$ and $z''=\min\{1,(1-x)(1-y)+\frac{x}{t}\}=1$.  Then,  the set $\Sigma$ (see Equation (\ref{EQ:UEXT}))   of coherent extensions on $P|S$ of the  imprecise assessment $\{1\}\times\{1\}\times(0,1]$ on  $(P|M,M|S,S|(S\vee M))$ is $\Sigma=\{1\}$.
 Thus, by Definition \ref{DEF:VALID},
 \begin{equation}\label{EQ:BARBARAIP}
\{1\}\times\{1\}\times(0,1] \mbox{ on } (P|M,M|S,S|(S\vee M)) \;\models_s\; \{1\} \mbox{ on } P|S. \end{equation}
In terms of probabilistic constraints, (\ref{EQ:BARBARAIP}) can be expressed by
\begin{equation}\label{EQ:BARBARA} 
(p(P|M)=1, p(M|S)=1, p(S|(S \vee M))>0) \,\models_s \, p(P|S)= 1\,,
\end{equation}
which is a  s-valid (and probabilistically informative) version of Barbara (under the conditional event existential import assumption).
\begin{remark}\label{REM:EIFIG1}
	By instantiating Remark \ref{REM:EI>CEI} to syllogisms of Figure I we obtain 
	that  $p(S)=p(S\wedge (S \vee M))=p(S|(S \vee M))p(S\vee M)$. Hence,  if $p(S)>0$  then $p(S|(S \vee M))>0$.
	Then,  as $p(S)>0$  implies $p(S|(S \vee M))>0$, 
	from (\ref{EQ:BARBARA}) it follows that 
	\begin{equation}
	(p(P|M)=1, p(M|S)=1,p(S)>0)\; \models_s \; p(P|S)= 1,
	\end{equation}
	which is  an s-valid version of 	Barbara   under the (stronger) unconditional existential import assumption.	
\end{remark}

\paragraph{Barbari}
From  (\ref{EQ:BARBARA}), by weakening the conclusion (see Remark \ref{REM:VALID}), it follows that 
\begin{equation}\label{EQ:BARBARI}
(p(P|M)=1, p(M|S)=1, p(S|(S \vee M))>0) \;\models\; p(P|S)>0\,,
\end{equation}
which is a valid (but not s-valid) version of  Barbari (``\emph{Every $M$ is $P$, Every $S$ is $M$, therefore Some $S$ is $P$}''). 	
\paragraph{Darii}
By instantiating $S,M,P$ in Theorem~\ref{THM:PROPWTFIGI} for $A,B,C$ with $x=1$, any $y>0$, and any $t>0$,
it follows that  $z'=\max\left\{0,xy-\frac{(1-t)(1-x)}{t}\right\}=y>0$ and $z''=\min\{1,(1-x)(1-y)+\frac{x}{t}\}=1$.
Then,  the set $\Sigma$    of coherent extensions on $P|S$ of the  imprecise assessment $\{1\}\times(0,1]\times(0,1]$ on  $(P|M,M|S,S|(S\vee M))$ is $\Sigma=
\bigcup_{\{(x,y,t)\in\{1\}\times (0,1]\times (0,1]\}} [y,1]=
(0,1]$.
Thus, by Definition \ref{DEF:VALID},
\begin{equation}\label{EQ:DARIIIP}
\{1\}\times(0,1]\times(0,1] \mbox{ on } (P|M,M|S,S|(S\vee M)) \;\models_s\; (0,1] \mbox{ on } P|S. \end{equation}
In terms of probabilistic constraints, (\ref{EQ:DARIIIP}) can be expressed by
\begin{equation}\label{EQ:DARII} 
(p(P|M)=1, p(M|S)>0, p(S|(S \vee M))>0) \,\models_s \, p(P|S)>0 \,,
\end{equation}
which is a  s-valid version of Darii (\emph{Every $M$ is $P$, Some $S$ is $M$, therefore Some $S$ is $P$}).
 Notice that Barbari (\ref{EQ:BARBARI})  also follows from Darii (\ref{EQ:DARII}) by strengthening the  minor premise (see Remark \ref{REM:VALID}).

\paragraph{Celarent}
By instantiating $S,M,P$ in Theorem~\ref{THM:PROPWTFIGI} for $A,B,C$ with $x=0$, $y=1$, and  $t>0$,
it follows that  $z'=\max\left\{0,xy-\frac{(1-t)(1-x)}{t}\right\}=0$ and $z''=\min\{1,(1-x)(1-y)+\frac{x}{t}\}=0$.
Then,  the set $\Sigma$    of coherent extensions on $P|S$ of the  imprecise assessment $\{0\}\times(0,1]\times(0,1]$ on  $(P|M,M|S,S|(S\vee M))$ is $\Sigma=\{0\}$.
Thus, by Definition \ref{DEF:VALID},
\begin{equation}\label{EQ:CELARENTIP}
\{0\}\times\{1\}\times(0,1] \mbox{ on } (P|M,M|S,S|(S\vee M)) \;\models_s\; \{0\} \mbox{ on } P|S. \end{equation}
In terms of probabilistic constraints, (\ref{EQ:CELARENTIP}) can be expressed by
\begin{equation}\label{EQ:CELARENT} 
(p(P|M)=0, p(M|S)=1, p(S|(S \vee M))>0) \,\models_s \, p(P|S)=0 \,,
\end{equation}
which is a  s-valid version of Celarent (\emph{No $M$ is $P$, Every $S$ is $M$, therefore No $S$ is  $P$}).
Notice that Celarent is equivalent to Barbara, because 
(\ref{EQ:CELARENT}) 
is equivalent to 
(\ref{EQ:BARBARA})
when  $P$ is replaced by  $\no{P}$ and the  probabilities are adjusted accordingly.

\paragraph{Celaront}
From  (\ref{EQ:CELARENT}), by weakening the conclusion, it follows that 
\begin{equation}\label{EQ:CELARONT}
(p(P|M)=0, p(M|S)=1, p(S|(S \vee M))>0)\;\models\;  p(\no{P}|S)>0\,,
\end{equation}
which is valid version of  Celaront (\emph{No $M$ is $P$, Every $S$ is $M$, therefore Some $S$ is not $P$}). 
Notice that Celaront (\ref{EQ:CELARONT}) is equivalent to Barbari (\ref{EQ:BARBARI}), where
 $P$ is replaced by  $\no{P}$.

\paragraph{Ferio}
By instantiating $S,M,P$ in Theorem~\ref{THM:PROPWTFIGI} for $A,B,C$ with $x=0$, any $y>0$, and any $t>0$,
it follows that  $z'=\max\left\{0,xy-\frac{(1-t)(1-x)}{t}\right\}=0$ and $z''=\min\{1,(1-x)(1-y)+\frac{x}{t}\}=1-y<1$.
Then,  the set $\Sigma$    of coherent extensions on $P|S$ of the  imprecise assessment $\{0\}\times(0,1]\times(0,1]$ on  $(P|M,M|S,S|(S\vee M))$ is $\Sigma=\bigcup_{\{(x,y,t)\in \{0\}\times (0,1]\times (0,1]\}} [0,1-y]=[0,1)$.
Thus, by Definition \ref{DEF:VALID},
\begin{equation}\label{EQ:FERIOIP}
\{0\}\times(0,1]\times(0,1] \mbox{ on } (P|M,M|S,S|(S\vee M)) \;\models_s\; [0,1) \mbox{ on } P|S. \end{equation}
In terms of probabilistic constraints, (\ref{EQ:FERIOIP}) can be equivalently expressed by (see Table~\ref{Table:BS})
\begin{equation}\label{EQ:FERIO} 
(p(P|M)=0, p(M|S)>0, p(S|(S \vee M))>0) \,\models_s \, p(\no{P}|S)>0 \,,
\end{equation}
which is a  s-valid version of Ferio (\emph{No $M$ is $P$, Some $S$ is $M$, therefore Some $S$ is not $P$}).  Notice that Ferio (\ref{EQ:FERIO}) is equivalent to Darii (\ref{EQ:DARII}), where
 $P$ is replaced by  $\no{P}$.   Celaront (\ref{EQ:CELARONT})  also follows from Ferio (\ref{EQ:FERIO}) by strengthening the  minor premise (i.e., $p(M|S)>0$ is replaced by the stronger constraint $p(M|S)=1$).  

	\section{Figure II.}
\label{SEC:FIGUREII}
In this section,
we prove that the probabilistic inference of $\no{C}|A$ from the premise set $(B|C,\widebar{B}|A)$, which corresponds to  the general form of syllogisms of Figure II, is probabilistically non-informative. Like in Section \ref{SEC:WT}, we show that the imprecise assessment $[0,1]^3$ on $(B|C,\widebar{B}|A,\no{C}|A)$ is t-coherent. 
Then, in order to obtain probabilistic informativeness, we add the probabilistic constraint $p(A|(A \vee  C))>0$ to the premise set, which corresponds to the conditional event existential import assumption of syllogisms of Figure II according to Definition \ref{DEF:CEI}. 
After showing that the imprecise assessment  $[0,1]^3$ on  $(B|C,\widebar{B}|A,A|(A \vee  C))$ is t-coherent, we prove  the precise and imprecise probability propagation rules for the inference from $(B|C,\widebar{B}|A,A|(A \vee  C))$ to  $\no{C}|A$.
We apply these results  in Section~\ref{SEC:FIGIISYLLO}, where we  study the valid syllogisms of Figure II.

\subsection{Coherence and  probability propagation   in Figure II.}
We prove the t-coherence of the  imprecise assessment  $[0,1]^3$ on the sequence of conditional events 
$(B|C,\widebar{B}|A,\widebar{C}|A)$.
\begin{proposition}\label{PROP:FIGUREII}
	Let $A,B,C$ be logically independent events.
	The imprecise assessment $[0,1]^3$ on $\mathcal{F}=(B|C,\widebar{B}|A,\widebar{C}|A)$ is t-coherent.
\end{proposition}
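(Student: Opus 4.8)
The plan is to mirror the proof of Proposition \ref{PROP:FIGUREI} almost verbatim, since the premise set is again a two-premise chain whose conditioning events are $C$, $A$, and $A$. I would fix an arbitrary precise assessment $\P=(x,y,t)\in[0,1]^3$ on $\F=(B|C,\widebar{B}|A,\widebar{C}|A)$ and show it is coherent; t-coherence of the imprecise assessment $[0,1]^3$ then follows at once, because every precise point of the cube is thereby covered.

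First I would tabulate the constituents and their associated points $Q_h$. Since $\mathcal{H}_n=C\vee A\vee A=A\vee C$, the single constituent outside $\mathcal{H}_n$ is $C_0=\widebar{A}\,\widebar{C}$, and the six constituents inside it are $ABC$, $AB\widebar{C}$, $A\widebar{B}C$, $A\widebar{B}\widebar{C}$, $\widebar{A}BC$, $\widebar{A}\,\widebar{B}C$. Reading off the values of $B|C$, $\widebar{B}|A$, $\widebar{C}|A$ on each constituent (inserting $x,y,t$ in the void cells) gives $Q_1=(1,0,0)$, $Q_2=(x,0,1)$, $Q_3=(0,1,0)$, $Q_4=(x,1,1)$, $Q_5=(1,y,t)$, $Q_6=(0,y,t)$, with $Q_0=\P$. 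The key observation, exactly as in Figure~I, is that $\P=xQ_5+(1-x)Q_6$, so the system $(\Systemsigma)$ is solvable and $\Lambda=(0,0,0,0,x,1-x)$ is a solution.

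Next I would invoke the reduction of Theorem \ref{COER-P0'}. Taking the singleton $\mathcal{S}'=\{(0,0,0,0,x,1-x)\}$ and computing the functions in (\ref{EQ:PHI}), I obtain $\Phi_1(\Lambda)=\lambda_1+\lambda_3+\lambda_5+\lambda_6=1>0$, while $\Phi_2(\Lambda)=\Phi_3(\Lambda)=\lambda_1+\lambda_2+\lambda_3+\lambda_4=0$, since the conditioning events of the second and third conditional events are both $A$. Hence $I_0'=\{2,3\}$ in the sense of (\ref{EQ:I0'}), and by Theorem \ref{COER-P0'} it suffices to check coherence of the sub-assessment $\P_0'=(y,t)$ on $\F_0'=(\widebar{B}|A,\widebar{C}|A)$.

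Finally, this sub-problem is settled exactly as in Proposition \ref{PROP:FIGUREI}: both conditional events are conditioned on $A$, so the four relevant constituents $ABC$, $AB\widebar{C}$, $A\widebar{B}C$, $A\widebar{B}\widebar{C}$ yield the points $(0,0),(0,1),(1,0),(1,1)$, whose convex hull is the full unit square. Thus any $(y,t)\in[0,1]^2$ lies in the hull, $(\Systemsigma)$ is solvable, and since every one of these constituents is contained in $A$ we have $\Phi_1=\Phi_2=1$ for every solution, forcing the new $I_0$ of (\ref{EQ:I0}) to be empty. Theorem \ref{COER-P0} then gives coherence of $(y,t)$, hence of $\P$, and so of $[0,1]^3$ on $\F$. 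I expect no genuine difficulty here; the only steps needing care are the bookkeeping of the negated predicates $\widebar{B},\widebar{C}$ when reading off the $Q_h$, and confirming that the Theorem \ref{COER-P0'} reduction lands on two conditional events sharing the conditioning event $A$, which is precisely what makes the unit-square argument go through.
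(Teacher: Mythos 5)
Your proposal is correct and follows essentially the same route as the paper's own proof: the same constituent/point table (with the identical key decomposition $\P=xQ_5+(1-x)Q_6$), the same reduction via Theorem \ref{COER-P0'} with $I_0'=\{2,3\}$ to the sub-assessment on $(\widebar{B}|A,\widebar{C}|A)$, and the same unit-square argument to conclude. No gaps.
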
	
\begin{proof} 
	Let $\P=(x,y,z)\in[0,1]^3$ be any probability assessment on $\F$.
	The constituents $C_h$ and  the  points $Q_h$ associated with   $(\F,\P)$  are
	given in Table \ref{TAB:TABLE_FIGUREII}. 
	\begin{table}[!h]
  \begin{minipage}{\textwidth}\centering 
			\caption{Constituents $C_h$ and  points $Q_h$ associated with  the probability  assessment    $\mathcal{P}=(x,y,z)$  on 
			$(B|C,\widebar{B}|A,\widebar{C}|A)$ involved in Figure II.
		}
		\label{TAB:TABLE_FIGUREII}
		\begin{tabular}{llll}\hline
			& $C_h$            & $Q_h$                          &  \\
			\hline
			$C_1$ \, & $ABC$         \,  & $(1,0,0)$ \,                   & $Q_1$   \\
			$C_2$ & $AB\no{C}$      & $(x,0,1)$                    & $Q_2$   \\
			$C_3$ & $A\no{B}C$       & $(0,1,0)$                    & $Q_3$   \\
			$C_4$ & $A\no{B}\no{C}$      & $(x,1,1)$                    & $Q_4$   \\
			$C_5$ & $\no{A}BC$ & $(1,y,z)$                    & $Q_5$   \\
			$C_6$ & $\no{A}\no{B}C$  & $(0,y,z)$                  & $Q_6$   \\
			$C_0$ & $\no{A} \, \no{C}$   &$(x,y,z)$                       & $Q_0=\mathcal{P} $  \\\hline
		\end{tabular}
		\end{minipage}
	\end{table}
	The constituents $C_h$ contained in  $\mathcal{H}_3=A\vee C$ are $C_1,\ldots,C_6$.
	We recall that coherence of  $\P=(x,y,z)$ on $\F$ requires that the condition $\P \in \mathfrak{I}$ is satisfied, where   $\mathfrak{I}$ is the convex hull of $Q_1, \ldots, Q_6$.
	This  amounts to the solvability of the following system:
	\[\begin{array}{l}
	(\Systemsigma)  \hspace{1 cm}
	\mathcal{P}=\sum_{h=1}^6 \lambda_hQ_h,\;
	\sum_{h=1}^6 \lambda_h=1,\; \lambda_h\geq 0,\, h=1,\ldots,6.
	\end{array}
	\]
We observe that $\P=(x,y,z)=x(1,y,z)+(1-x)(0,y,z)=xQ_5+(1-x)Q_6$, that is  system 	$(\Systemsigma)  $ is solvable and a solution is
 $\Lambda=(0,0,0,0,x,1-x)$. As $\Phi_{2}(\Lambda)=\Phi_{3}(\Lambda)=\lambda_1+\lambda_2+\lambda_3+\lambda_4=0$, it holds that $I_0'=\{2,3\}$.
 Then, by Theorem \ref{COER-P0'}, in order to check coherence of $(x,y,z)\in[0,1]^3$ it is sufficient to check the coherence of the sub-assessment $(y,z)\in[0,1]^2$ on $(\no{B}|A,\no{C}|A)$. 
  	The constituents $C_h$ associated to the pair $((\no{B}|A,\no{C}|A), (y,z) )$ contained in  $\mathcal{H}_2=A$ are $C_1=ABC, C_2=AB\no{C}, C_3=A\no{B}C, C_4=A\no{B}\no{C}$ and the corresponding points $Q_h$ are $Q_1=(0,0), Q_2=(0,1), Q_3=(1,0), Q_4=(1,1)$.
The convex hull $\mathfrak{I}$ of the points $Q_1,Q_2,Q_3,Q_4$ is the unit square  $[0,1]^2$. Then $(y,z)\in[0,1]^2$ trivially belongs to $\mathfrak{I}$ and hence the system $(y,z)=\lambda_1Q_1+\lambda_2Q_2+\lambda_3Q_3+\lambda_4Q_4$  has always a nonnegative solution $(\lambda_1,\lambda_2,\lambda_3,\lambda_4)$ with $\lambda_1+\lambda_2+\lambda_3+\lambda_4=1$.
 Moreover, as $\Phi_{1}(\Lambda)=\Phi_{2}(\Lambda)=\lambda_1+\lambda_2+\lambda_3+\lambda_4=1$, it follows that $I_0=\emptyset$ and hence $(y,z)$ is coherent.
\end{proof}
\begin{proposition}\label{PROP:FIGUREIIBIS}
	Let $A,B,C$ be logically independent events. The assessment $(x,y,t)$  on $(B|C,\widebar{B}|A,A|(A\vee C))$ is coherent for every $(x,y,t)\in [0,1]^3$.
\end{proposition}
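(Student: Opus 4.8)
The family $(B|C,\widebar{B}|A,A|(A\vee C))$ has exactly the two structural features that drove the proof of Proposition~\ref{PROP:FIGUREIBIS}, so the plan is to mirror that argument. First, the conditioning event $A\vee C$ of the third conditional event coincides with the disjunction of all conditioning events, $\mathcal{H}_3=C\vee A\vee(A\vee C)=A\vee C$, so every constituent in $\mathcal{H}_3$ is contained in it. Second, the union $C\vee A$ of the first two conditioning events already equals $\mathcal{H}_3$. Together with the logical independence of $A,B,C$, these facts will make the whole cube $[0,1]^3$ coherent.

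I would start by fixing an arbitrary precise assessment $\P=(x,y,t)\in[0,1]^3$ on $\F$ and listing the constituents $C_h\subseteq\mathcal{H}_3=A\vee C$ with their points $Q_h$: namely $C_1=ABC$, $C_2=AB\no{C}$, $C_3=A\no{B}C$, $C_4=A\no{B}\no{C}$, $C_5=\no{A}BC$, $C_6=\no{A}\,\no{B}C$, and $C_0=\no{A}\,\no{C}$, with associated points $Q_1=(1,0,1)$, $Q_2=(x,0,1)$, $Q_3=(0,1,1)$, $Q_4=(x,1,1)$, $Q_5=(1,y,0)$, $Q_6=(0,y,0)$, and $Q_0=\P$. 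Writing the system $(\Systemsigma)$ demanding $\P=\sum_{h=1}^{6}\lambda_hQ_h$, $\sum_h\lambda_h=1$, $\lambda_h\geq0$, I would then reduce it exactly as in~(\ref{EQ:SIGMA_FIG1BIS})--(\ref{EQ:SIGMA_FIG1TRIS}) to the constraints $\lambda_1+\lambda_2=t(1-y)$, $\lambda_3+\lambda_4=yt$, $\lambda_5+\lambda_6=1-t$, supplemented by the first-coordinate equation $\lambda_1+x\lambda_2+x\lambda_4+\lambda_5=x$.

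The genuinely computational step is to show $(\Systemsigma)$ solvable for every $(x,y,t)\in[0,1]^3$. Treating $\lambda_1$ and $\lambda_4$ as free and eliminating the remaining unknowns, the value $\lambda_5=x-\lambda_1(1-x)-xt(1-y)-x\lambda_4$ is forced; as it is monotone in $\lambda_1,\lambda_4$, its attainable range is an interval, and solvability reduces to the nonemptiness condition $\max\{0,\,x-t+ty-xyt\}\leq\min\{1-t,\,x-xt(1-y)\}$. I expect this uniform inequality --- which must be checked on the whole cube, including the degenerate faces $t\in\{0,1\}$ and $x,y\in\{0,1\}$ --- to be the only place requiring care, though each of its two parts collapses to an elementary estimate such as $x+ty(1-x)\leq1$. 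Once solvability is established, $\Phi_3(\Lambda)=\sum_h\lambda_h=1$ is identically positive, so $3\notin I_0$ and hence $I_0\subseteq\{1,2\}$; and since every constituent lies in $A$ or in $C$ one has $\Phi_1(\Lambda)+\Phi_2(\Lambda)\geq\Phi_3(\Lambda)=1$ for each solution, which excludes $I_0=\{1,2\}$.

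It then suffices to handle the three possibilities $I_0=\emptyset$, $I_0=\{1\}$, $I_0=\{2\}$ by Theorem~\ref{COER-P0}. When $I_0=\emptyset$, coherence of $\P$ follows at once from solvability. When $I_0=\{1\}$ the residual sub-assessment is $x$ on $\{B|C\}$, and when $I_0=\{2\}$ it is $y$ on $\{\widebar{B}|A\}$; by logical independence of $A,B,C$ each such single conditional probability is coherent for any value in $[0,1]$, so Theorem~\ref{COER-P0} delivers coherence of $\P$ in every case. Since $\P\in[0,1]^3$ was arbitrary, every precise assessment on $\F$ is coherent, which is the claim (equivalently, $[0,1]^3$ on $\F$ is t-coherent).
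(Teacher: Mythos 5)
Your proof is correct and takes essentially the same route as the paper's: the same constituents and points, the same reduction of the system to $\lambda_1+\lambda_2=t(1-y)$, $\lambda_3+\lambda_4=yt$, $\lambda_5+\lambda_6=1-t$ plus the first-coordinate equation, and the same final appeal to Theorem~\ref{COER-P0} after noting that $\Phi_1+\Phi_2\geq\Phi_3=1$ forces $I_0\subsetneq\{1,2\}$ and that a single conditional probability on $B|C$ or $\no{B}|A$ is coherent for any value in $[0,1]$. The only difference is bookkeeping: the paper proves solvability by simply exhibiting the solution $\lambda_1=\lambda_3=0$, $\lambda_2=t(1-y)$, $\lambda_4=yt$, $\lambda_5=x(1-t)$, $\lambda_6=(1-x)(1-t)$ and then applying Theorem~\ref{COER-P0'} with a case split on $t$, which spares the cube-wide interval-intersection inequality that you derive and only partially verify (its remaining nontrivial half, $x-t+ty-xyt\leq x(1-t+ty)$, reduces to $(1-x)(1-y)+xy\geq0$ and does hold).
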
	
\begin{proof} 
	Let $\P=(x,y,t)\in[0,1]^3$ be a probability assessment on $\mathcal{F}$.
	The constituents $C_h$ and  the  points $Q_h$ associated with   $(\F,\P)$ are
	given in Table \ref{TAB:TABLE_FIGUREIIPREMISE}. 
	\begin{table}[!h]
		\begin{minipage}{\textwidth}
			\caption{Constituents $C_h$ and  points $Q_h$ associated with  the probability  assessment    $\mathcal{P}=(x,y,t)$  on 
				$\F=(B|C,\no{B}|A,A|(A\vee C))$ involved in the premise set of Figure~II.
			}
			\label{TAB:TABLE_FIGUREIIPREMISE}
		\centering 	\begin{tabular}{llll}\hline
		& $C_h$            & $Q_h$                          &  \\
		\hline
		$C_1$ \, & $ABC$         \,  & $(1,0,1)$ \,                   & $Q_1$   \\
		$C_2$ & $AB\no{C}$      & $(x,0,1)$                    & $Q_2$   \\
		$C_3$ & $A\no{B}C$       & $(0,1,1)$                    & $Q_3$   \\
		$C_4$ & $A\no{B}\no{C}$      & $(x,1,1)$                    & $Q_4$   \\
		$C_5$ & $\no{A}BC$ & $(1,y,0)$                    & $Q_5$   \\
		$C_6$ & $\no{A}\no{B}C$  & $(0,y,0)$                  & $Q_6$   \\
		$C_0$ & $\no{A} \, \no{C}$   &$(x,y,t)$                       & $Q_0=\mathcal{P} $  \\\hline
	\end{tabular}
		\end{minipage}
	\end{table}
	By Theorem \ref{COER-P0}, coherence of  $\P=(x,y,t)$ on $\F$ requires that the following system is solvable
	\[\begin{array}{l}
	(\Systemsigma)  \hspace{1 cm}
	\mathcal{P}=\sum_{h=1}^6 \lambda_hQ_h,\;
	\sum_{h=1}^6 \lambda_h=1,\; \lambda_h\geq 0,\, h=1,\ldots,6,
	\end{array}
	\]
or equivalently
\begin{equation}\label{EQ:FIGURIISYSTEMBIS}
\begin{array}{lll}
\left\{
\begin{array}{lllllll}
\lambda_1+\lambda_5=x(\lambda_1+\lambda_3+\lambda_5+\lambda_6), \\
\lambda_3+\lambda_4=yt, \\
\lambda_1+\lambda_2+\lambda_3+\lambda_4=t, \\
\lambda_1+\lambda_2+\lambda_3+\lambda_4+\lambda_5+\lambda_6=1, \;\\
\lambda_i\geq 0,\; i=1,\ldots,6\,,
\end{array}
\right.
&  \Longleftrightarrow  &
\left\{
\begin{array}{lllllll}
\lambda_1+\lambda_5=x(\lambda_1+\lambda_3+\lambda_5+\lambda_6), \\
\lambda_3+\lambda_4=yt, \\
\lambda_1+\lambda_2=t(1-y), \\
\lambda_5+\lambda_6=1-t, \;\\
\lambda_i\geq 0,\; i=1,\ldots,6.
\end{array}
\right.
\end{array}
\end{equation}
System $(\Systemsigma) $ is solvable and a subset $\mathcal{S}'$ of the set of  solutions consists of  $\Lambda=(\lambda_1,\ldots,\lambda_6)$ such that
\begin{equation}
\left\{
\begin{array}{lllllll}
\lambda_1=\lambda_3=0, \;
\lambda_2=t(1-y), \\
\lambda_4=yt, \;
\lambda_5=x(1-t), \\
\lambda_6=(1-x)(1-t), \\
\lambda_i\geq 0,\; i=1,\ldots,6\,.
\end{array}
\right.
\end{equation}
Moreover, for each $\Lambda\in \mathcal{S'}$ it holds that $\Phi_{1}(\Lambda)=\sum_{h:C_h\subseteq C}=\lambda_1+\lambda_3+\lambda_5+\lambda_6=1-t$, $\Phi_{2}(\Lambda)=\sum_{h:C_h\subseteq A}=\lambda_1+\lambda_2+\lambda_3+\lambda_4=t$  and $\Phi_{3}(\Lambda)=\sum_{h:C_h\subseteq A\vee C}\lambda_h=1>0$.
If $0<t<1$, it holds that $I_0'=\emptyset$, hence, by Theorem \ref{COER-P0'}, $(x,y,t)$ is coherent.
If $t=0$, it holds that $I_0'=\{2\}$ and as the sub-assessment $y\in[0,1]$ on $\no{B}|A$ is  coherent, it follows by  Theorem \ref{COER-P0'} that $(x,y,t)$ is coherent.
Likewise, if $t=1$, it holds that $I_0'=\{1\}$ and as the sub-assessment $x\in[0,1]$ on $B|C$ is  coherent, it follows by Theorem \ref{COER-P0'} that  $(x,y,t)$ is coherent. Then, $(x,y,t)$ is coherent for every $(x,y,t)\in[0,1]^3$.
\end{proof}

The next  result  allows for computing  the lower  and upper bounds, $z'$ and $z''$ respectively,  for the coherent extensions $z=p(\widebar{C}|A)$ from the assessment $(x,y,t)$ on $(B|C,\widebar{B}|A, A|(A\vee C))$.
\begin{theorem}\label{THM:PROPF2}
	Let $A,B,C$ be three logically independent events and $(x,y,t)\in[0,1]^3$ be any   assessment on the family $(B|C,\widebar{B}|A,A|(A\vee C))$. Then, the  extension $z=p(\widebar{C}|A)$ is coherent if and only if $z\in[z',z'']$, where 
	\[
	\begin{array}{ll}
	[z',z''] =
	\left\{
	\begin{array}{cl}
	\left[0,1\right], &  \mbox{if }t\leq x+yt\leq 1, \\~
    \displaystyle \big[\frac{x + yt - 1}{t\, x},\; 1\big] \,, &\mbox{if } x+yt> 1,\\~
    \displaystyle \big	[\frac{t-x-yt }{t\, (1-x)},\; 1\big] \,, & \mbox{if } x+yt< t. 
	\end{array}
	\right.
	\end{array}
	\]	
\end{theorem}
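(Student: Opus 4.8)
The plan is to apply Algorithm~\ref{Alg} (equivalently Theorem~\ref{THM:FUND}) to the extended family $(B|C,\no{B}|A,A|(A\vee C),\no{C}|A)$, treating $\no{C}|A$ as the conclusion $E_{n+1}|H_{n+1}$. First I would extend Table~\ref{TAB:TABLE_FIGUREIIPREMISE} by one column recording the value of $\no{C}|A$ on each constituent: since the conditioning event of the conclusion is $A$, the conditional event $\no{C}|A$ is true on the atoms $AB\no{C}=C_2$ and $A\no{B}\no{C}=C_4$, false on $ABC=C_1$ and $A\no{B}C=C_3$, and void on the two $\no{A}$-atoms $C_5,C_6$. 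By Theorem~\ref{THM:FUND} the coherent extensions $z=p(\no{C}|A)$ fill a closed interval $[z',z'']$, so it suffices to compute the two endpoints as the minimum and maximum of the objective of system~(\ref{S_{n+1}}). The overall structure parallels Theorem~\ref{THM:PROPWTFIGI}.

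Next I would reuse the solvability analysis already carried out in Proposition~\ref{PROP:FIGUREIIBIS} (system~(\ref{EQ:FIGURIISYSTEMBIS})). For $t>0$ I would parametrize the solutions by setting $\lambda_1=p$, $\lambda_2=(1-y)t-p$, $\lambda_3=q$, $\lambda_4=yt-q$, $\lambda_5=r$, $\lambda_6=(1-t)-r$, subject to $0\le p\le (1-y)t$, $0\le q\le yt$, $0\le r\le 1-t$, together with the single remaining constraint coming from $p(B|C)=x$, namely $p+r=x\,(p+q+1-t)$. A short computation gives $p(\no{C}|A)=(\lambda_2+\lambda_4)/t = 1-(p+q)/t$, so minimizing $z$ amounts to maximizing $u=p+q$ and maximizing $z$ to minimizing $u$. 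Taking $p=q=0$ is always admissible (then $r=x(1-t)\in[0,1-t]$), which yields $u=0$ and hence $z''=1$ in every case.

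The core of the argument is the linear program $\max u$. Eliminating $r$ turns the feasibility condition $0\le r\le 1-t$ into the two linear inequalities $x(p+q+1-t)\ge p$ and $x(p+q+1-t)\le p+1-t$. I would then show that the unconstrained optimum $u=t$ (i.e.\ $p=(1-y)t$, $q=yt$) is feasible exactly when $t\le x+yt\le 1$, giving $z'=0$ and the first case. When $x+yt>1$ the upper bound $r\le 1-t$ becomes binding; pushing $p$ to its maximum $(1-y)t$ yields $u_{\max}=\big((1-y)t+(1-t)(1-x)\big)/x$, hence $z'=(x+yt-1)/(tx)$. When $x+yt<t$ the lower bound $r\ge 0$ becomes binding; balancing $p\ge u-yt$ against $p\le x(p+q+1-t)$ yields $u_{\max}=\big(yt+x(1-t)\big)/(1-x)$, hence $z'=(t-x-yt)/\big(t(1-x)\big)$. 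The degenerate case $t=0$ I would treat separately: then $p(A|(A\vee C))=0$ forces the conclusion's conditioning event to carry no mass within $\mathcal{H}_{n+1}$, so $\no{C}|A$ is probabilistically unconstrained and $[z',z'']=[0,1]$.

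Finally, I would confirm that each computed endpoint is a genuinely coherent extension rather than merely a solution of the linear system, invoking Theorem~\ref{COER-P0'} (as in Propositions~\ref{PROP:FIGUREII} and~\ref{PROP:FIGUREIIBIS}) to discharge the sub-assessments associated with any index set $I_0'$ that arises at a boundary, where a conditioning event acquires upper probability zero. I expect the main obstacle to be the bookkeeping in this case split: correctly identifying which of the two inequalities on $r$ is active in each region, verifying that the optimizing values of $p,q$ remain inside $[0,(1-y)t]$ and $[0,yt]$, and handling the boundary values $x\in\{0,1\}$ and $t\in\{0,1\}$ so that the three formulas agree on the overlaps of their defining regions.
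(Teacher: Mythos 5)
Your proposal is correct and follows essentially the same route as the paper: both reduce the computation of $z'$ and $z''$ to a linear program over the solution set of the premise system (this is exactly Step~2 of Algorithm~\ref{Alg}), identify the solvability condition $t\leq x+yt\leq 1$ for attaining $z=0$, and obtain the two boundary formulas from whichever of the two inequalities on the free parameter becomes binding; your verified optima $u_{\max}$ translate precisely into the stated bounds. The only cosmetic differences are that you keep the normalization $\sum_r\lambda_r=1$ on $\mathcal{H}_n$ and exploit that the mass on $A$ is pinned to $t$ (rather than renormalizing on $H_{n+1}$ as in the algorithm), and that your $t=0$ case is argued more informally than the paper's second cycle of the algorithm — but your closing appeal to Theorem~\ref{COER-P0'} and the total coherence of the sub-assessment on $(\widebar{B}|A,\widebar{C}|A)$ discharges exactly that point.
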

\begin{proof}
Let  $(x,y,t)\in [0,1]^3$ be  a generic assessment on $(B|C,\widebar{B}|A,A|(A\vee C))$. 
We recall that  $(x,y,t)$ is coherent (Proposition \ref{PROP:FIGUREIIBIS}).
In order to prove the theorem  we
derive the coherent lower and upper probability bounds $z'$ and $z''$ by applying Algorithm~\ref{Alg} in a symbolic way.
\paragraph{Computation of the lower probability bound  $z'$ on $\widebar{C}|A$}~\\
\emph{Input.\,} $\F=(B|C,\widebar{B}|A,A|(A\vee C))$, $E_{n+1}|H_{n+1}=\widebar{C}|A$.\\
\emph{Step 0.\,} The constituents  associated with  $(B|C,\widebar{B}|A,A|(A\vee C),\widebar{C}|A)$
and contained in $\mathcal{H}_{n+1}= A \vee C$ are
$C_1=ABC\,, C_2=AB \widebar{C}\,, C_3=A \widebar{B}C\,, C_4=A \widebar{B}  \widebar{C}\,, C_5=\widebar{A}BC\,,$ and  $C_6=\widebar{A} \widebar{B} C$.
We construct the following starting
 system with unknowns $\lambda_1,\ldots,\lambda_6,z$ (see Remark \ref{REM1}):
\begin{equation}\label{S_0'}
\left\{
\begin{array}{lllllll}
\lambda_2+\lambda_4=z(\lambda_1+\lambda_2+\lambda_3+\lambda_4), \;
\lambda_1+\lambda_5=x(\lambda_1+\lambda_3+\lambda_5+\lambda_6), \\
\lambda_3+\lambda_4=y(\lambda_1+\lambda_2+\lambda_3+\lambda_4), \\
\lambda_1+\lambda_2+\lambda_3+\lambda_4=t(\lambda_1+\lambda_2+\lambda_3+\lambda_4+\lambda_5+\lambda_6), \\
\lambda_1+\lambda_2+\lambda_3+\lambda_4+\lambda_5+\lambda_6=1, \;
\lambda_i\geq 0,\; i=1,\ldots,6\,.
\end{array}
\right.
\end{equation}
\emph{Step 1.\,} By setting  $z=0$ in  System (\ref{S_0'}), we   obtain
\begin{equation}\label{S_0'z=0}
\begin{array}{lcl}
\left\{
\begin{array}{lllllll}
\lambda_2+\lambda_4=0, \;
\lambda_1+\lambda_5=x, \\
\lambda_3=y(\lambda_1+\lambda_3), \;
\lambda_1+\lambda_3=t, \\
\lambda_1+\lambda_3+\lambda_5+\lambda_6=1, \\
\lambda_i\geq 0,\; i=1,\ldots,6\,.
\end{array}
\right.
&
\quad \Longleftrightarrow \quad &
\left\{
\begin{array}{lllllll}
\lambda_1=t(1-y), \;
\lambda_2=0, \;
\lambda_3=yt, \\
\lambda_4=0,\;
\lambda_5=x-t(1-y), \\
\lambda_6=1-x-yt, \\
\lambda_i\geq 0,\; i=1,\ldots,6\,.
\end{array}
\right.
\end{array}
\end{equation}
The solvability of System (\ref{S_0'z=0}) is a necessary condition for the coherence of the assessment $(x,y,t,0)$ on $(B|C,\widebar{B}|A,A|(A\vee C),\widebar{C}|A)$. As $(x,y,t) \in [0,1]^3$, it holds  that: $\lambda_1=t(1-y)\geq 0$, $\lambda_3=yt\geq 0$.
 Thus, System (\ref{S_0'z=0})   is solvable  if and only if  $\lambda_5\geq 0$ and $\lambda_6\geq 0$, that is  
\[
t-yt\leq x\leq 1-yt \Longleftrightarrow t\leq x+yt\leq 1.
\] 
We distinguish two cases:  $(i)$ $x+yt >1 \vee x+yt<t$; $(ii)$ $ t\leq x+yt\leq 1$.  In Case~$(i)$,  System~(\ref{S_0'z=0})   is  not solvable (which implies that the coherent extension $z$ of $(x,y,t)$ must be positive). Then,
we go to Step~2 of the algorithm where the (positive) lower bound $z'$ is obtained by optimization. In Case~$(ii)$,  System~(\ref{S_0'z=0})  is  solvable and in order to check whether $z=0$ is a coherent extension,  we go to Step~3. 

\emph{Case $(i)$.} We observe that in this case $t$ cannot be $0$. By Step 2 we have the following linear programming problem:\\
\emph{Compute $z'=\min(  \lambda_2+\lambda_4)$ subject to:}
\begin{equation}\label{Sigma'}
\left\{
\begin{array}{lllllll}
\lambda_1+\lambda_5=x(\lambda_1+\lambda_3+\lambda_5+\lambda_6), \;
\lambda_3+\lambda_4=y(\lambda_1+\lambda_2+\lambda_3+\lambda_4), \\
\lambda_1+\lambda_2+\lambda_3+\lambda_4=t(\lambda_1+\lambda_2+\lambda_3+\lambda_4+\lambda_5+\lambda_6), \\
\lambda_1+\lambda_2+\lambda_3+\lambda_4=1, \;
\lambda_i\geq 0,\; i=1,\ldots,6.
\end{array}
\right.
\end{equation}
In this case, 
the constraints in (\ref{Sigma'}) can be rewritten in the following way
\[
\left\{
\begin{array}{lllllll}
\lambda_1+\lambda_5=x(\lambda_1+\lambda_3+\lambda_5+\lambda_6), \\
\lambda_3+\lambda_4=y,\;
\lambda_5+\lambda_6= \frac{1-t}{t},\\
\lambda_1+\lambda_2+\lambda_3+\lambda_4=1, \\
\lambda_i\geq 0,\; i=1,\ldots,6\,,
\end{array}
\right.
\hspace{-0.3cm} \Leftrightarrow 
\left\{
\begin{array}{lllllll}
1-y-\lambda_2+\lambda_5=x(1-\lambda_2-\lambda_4+\frac{1-t}{t}),  \\
\lambda_3=y-\lambda_4,\;
\lambda_6= \frac{1-t}{t}-\lambda_5,\\
\lambda_1=1-y-\lambda_2, \\
\lambda_i\geq 0,\; i=1,\ldots,6\,,
\end{array}
\right.
\]
or equivalently
\[
\left\{
\begin{array}{lllllll}
x\lambda_4+(1-y)+\lambda_5=\lambda_2(1-x)+\frac{x}{t},  \;
\lambda_3=y-\lambda_4,\\
\lambda_5= \frac{1-t}{t}-\lambda_6,\;
\lambda_1=1-y-\lambda_2, \;
\lambda_i\geq 0,\; i=1,\ldots,6.
\end{array}
\right.
\]
We distinguish two  (alternative) cases: $(i.1)$ $x+yt >1$; $(i.2)$ $ x+yt<t$.\\ 
Case $(i.1)$. The constraints in (\ref{Sigma'}) can be rewritten in the following way
\[
\left\{
\begin{array}{lllllll}
x(\lambda_2+\lambda_4)=\frac{x}{t}-(1-y)-\frac{1-t}{t}+\lambda_2+\lambda_6, \;
\lambda_3=y-\lambda_4,\\
\lambda_5= \frac{1-t}{t}-\lambda_6,\;
\lambda_1=1-y-\lambda_2, \;
\lambda_i\geq 0,\; i=1,\ldots,6\,.
\end{array}
\right.
\]
 As $x>1-ty$, we observe that  $x>0$.  Then, the minimum of $z=\lambda_2+\lambda_4$, obtained when $\lambda_2=\lambda_6=0$, is 
 \begin{equation}
 z'=\frac{1}{x}\left(\frac{x}{t}-(1-y)-\frac{1-t}{t}\right)=\frac{x-t+yt-1+t}{xt}=\frac{x+yt-1}{xt}.
 \end{equation}
By choosing $\lambda_2=\lambda_6=0$ the constraints in (\ref{Sigma'}) are satisfied with
\[
\left\{
\begin{array}{lllllll}
\lambda_1=1-y, \;
\lambda_2=0,\;
\lambda_3=y-\frac{x+yt-1}{xt},\;
\lambda_4=\frac{x+yt-1}{xt},  \\
\lambda_5= \frac{1-t}{t},\;
\lambda_6=0,\;
\lambda_i\geq 0,\; i=1,\ldots,6.
\end{array}
\right.
\]
In particular $\lambda_3\geq0$ is satisfied because the condition  $\frac{x+yt-1}{xt}\leq y$, which in this case amounts to  
$yt(1-x)\leq1-x$, is always satisfied.
 Then, the \emph{procedure stops} yielding as \emph{output}
$z'=\frac{x+yt-1}{xt}$.\\
Case $(i.2)$. The constraints in (\ref{Sigma'}) can be rewritten in the following way
\[
\left\{
\begin{array}{lllllll}
(1-y)-\frac{x}{t}+\lambda_5+\lambda_4=\lambda_2(1-x)-x\lambda_4+\lambda_4,  \;
\lambda_3=y-\lambda_4,\\
\lambda_6= \frac{1-t}{t}-\lambda_5,\;
\lambda_1=1-y-\lambda_2, \;
\lambda_i\geq 0,\; i=1,\ldots,6\,,
\end{array}
\right.
\]
or equivalently
\[
\left\{
\begin{array}{lllllll}
(\lambda_2+\lambda_4)(1-x)=(1-y)-\frac{x}{t}+\lambda_4+\lambda_5, \;
\lambda_3=y-\lambda_4,\\
\lambda_6= \frac{1-t}{t}-\lambda_5,\;
\lambda_1=1-y-\lambda_2, \;
\lambda_i\geq 0,\; i=1,\ldots,6\,.
\end{array}
\right.
\]
As $t-yt-x>0$, that is $x<t(1-y)$, it holds that  $x<1$.  Then, the minimum of $z=\lambda_2+\lambda_4$, obtained when $\lambda_4=\lambda_5=0$, is 
\[
z'=\frac{1}{1-x}\left(1-y-\frac{x}{t}\right)=\frac{t-yt-x}{(1-x)t}\geq 0.
\] 
We observe that  by choosing $\lambda_4=\lambda_5=0$ the constraints in (\ref{Sigma'}) are satisfied, indeed they are
\[
\left\{
\begin{array}{lllllll}
\lambda_1=1-y, \;
\lambda_2=\frac{t-yt-x}{(1-x)t},\;
\lambda_3=y,\;
\lambda_4=0,  \\
\lambda_5= 0,\;
\lambda_6=\frac{1-t}{t},\;
\lambda_i\geq 0,\; i=1,\ldots,6.
\end{array}
\right.
\]
 Then, the \emph{procedure stops} yielding as \emph{output}
$z'=\frac{t-yt-x}{(1-x)t}$.\\

\emph{Case $(ii)$.} We take Step~3 of the algorithm. We denote by $\Lambda$  and $\mathcal{S}$ the vector of unknowns $(\lambda_1,\ldots,\lambda_6)$ and the set of solutions of System~(\ref{S_0'z=0}), respectively.
We consider the following linear functions (associated with the conditioning events $H_1=C, H_2=H_4=A, H_3=A \vee C$) and their maxima in $\mathcal{S}$: 
\begin{equation}\label{EQ:PHIFIGII}
\begin{array}{l}
\Phi_{1}(\Lambda)=\sum_{r:C_r\subseteq C}\lambda_r= \lambda_1+\lambda_3+\lambda_5+\lambda_6, \\ \Phi_{2}(\Lambda)=\Phi_{4}(\Lambda)=\sum_{r:C_r\subseteq A}\lambda_r=\lambda_1+\lambda_2+\lambda_3+\lambda_4, \\ 
\Phi_{3}(\Lambda)=\sum_{r:C_r\subseteq A\vee C}\lambda_r=\lambda_1+\lambda_2+\lambda_3+\lambda_4+\lambda_5+\lambda_6\,, \\
M_i=\max_{\Lambda \in \mathcal{S}} \Phi_i(\Lambda),\;\;  i=1,2,3,4\;.
\end{array}
\end{equation}

By (\ref{S_0'z=0}) we obtain: $\Phi_{1}(\Lambda)=t(1-y)+yt+x-t(1-y)+1-x-yt=1$,  $\Phi_{2}(\Lambda)=\Phi_{4}(\Lambda)=t(1-y)+0+yt+0=t$,  $\Phi_3(\Lambda)=t(1-y)+0+yt+0+x-t(1-y)+1-x-yt=1$, $\forall \Lambda \in \mathcal{S}$.
 Then,  $M_1=1$,  $M_2=M_4=t$, and $M_3=1$.
We consider two subcases: $t>0$; $t=0$.
If $t>0$, then $M_{4}>0$ and we are in the first case  of Step~3. Thus,  the \emph{procedure stops} and yields $z'=0$ as \emph{output}.  \\
If $t=0$, then
$M_{1}>0,M_{3}>0$ and $M_{2}=M_{4}=0$. Hence, we are in third case of Step~3 with $J = \{2\}, I_0=\{2,4\}$ and the procedure restarts with Step 0, with $\F$ replaced by  $\F_J=(\widebar{B}|A)$.\\

\emph{(2\textsuperscript{nd} cycle) Step $0$.\,} The constituents  associated with  $(\widebar{B}|A,\widebar{C}|A)$, contained in $A$, are
$ C_1=ABC, C_2=AB\widebar{C}, C_3=A\widebar{B}C, C_4=A\widebar{B}\widebar{C}$.
The starting system is
\begin{equation}\label{S_1'}
\left\{
\begin{array}{lllllll}
\lambda_3+\lambda_4=y(\lambda_1+\lambda_2+\lambda_3+\lambda_4), \; 
\lambda_2+\lambda_4=z(\lambda_1+\lambda_2+\lambda_3+\lambda_4),\\
\lambda_1+\lambda_2+\lambda_3+\lambda_4=1,\;
 \lambda_i\geq 0,\;\; i=1,\ldots,4\,.
\end{array}
\right.
\end{equation}
\emph{(2\textsuperscript{nd} cycle) Step 1.\,} By setting  $z=0$ in System (\ref{S_1'}), we
obtain
\begin{equation}\label{S_1'z=0}
\left\{
\begin{array}{ll}
\lambda_1=1-y,\;\;
\lambda_2=\lambda_4=0, \;\;
\lambda_3=y, \;\;
\lambda_i\geq 0,\;\; i=1,\ldots,4\,.
\end{array}
\right.
\end{equation}
As $y \in [0,1]$,  System (\ref{S_1'z=0})  is always solvable; thus, we go to Step 3.  \\
\emph{(2\textsuperscript{nd} cycle) Step 3}. 
We denote by $\Lambda$  and $\mathcal{S}$ the vector of unknowns $(\lambda_1,\ldots,\lambda_4)$ and the set of solutions of System~(\ref{S_1'z=0}), respectively. The conditioning events are  $H_2=A$ and $H_4=A$; then the associated   linear functions are:
$\Phi_2(\Lambda)=\Phi_4(\Lambda)=\sum_{r:C_r\subseteq A}\lambda_r=\lambda_1+\lambda_2+\lambda_3+\lambda_4$.
From System~(\ref{S_1'z=0}), we obtain: $\Phi_2(\Lambda)=\Phi_4(\Lambda)=1$, $\forall \Lambda \in \mathcal{S}$; so that $M_2=M_4=1$.
We are in the first case  of Step 3 of the algorithm; then the \emph{procedure stops} and yields   $z'=0$ as \emph{output}. 

To summarize,  for any  $(x,y,t)\in[0,1]^3$ on $(B|C,\widebar{B}|A,A|(A\vee C))$, we have computed the  coherent lower bound $z'$ on $\widebar{C}|A$. 
In particular,  if $t=0$, then $z'=0$. We also have   $z'=0$, when 
 $t>0$ and $t\leq x+yt\leq 1$, that is when $0<t\leq x+yt\leq 1$.  Then,  we can write that    $z'=0$, when 
 $ t\leq x+yt\leq 1$. Otherwise,  we have two cases: $(i.1)$ $z'=\frac{x+yt-1}{xt}$, if $x+yt >1$;  
 $(i.2)$ $z'=\frac{t-yt-x}{(1-x)t}$, if $x+yt <t$.

\paragraph{Computation of the upper  probability bound  $z''$ on $\widebar{C}|A$}~\\
\emph{Input} and \emph{Step 0} are  the same as  in the proof of $z'$.\\ 
\emph{Step 1.\,} By setting $z=1$ in  System (\ref{S_0'}), we   obtain
\begin{equation}\label{S_0'z=1}
\begin{array}{lll}
\left\{
\begin{array}{lllllll}
\lambda_1+\lambda_3=0, \;
\lambda_5=x(\lambda_5+\lambda_6), \\
\lambda_4=y(\lambda_2+\lambda_4), \;
\lambda_2+\lambda_4=t, \\
\lambda_2+\lambda_4+\lambda_5+\lambda_6=1, \\
\lambda_i\geq 0,\; i=1,\ldots,6\,.
\end{array}
\right.
&  \Longleftrightarrow  &
\left\{
\begin{array}{lllllll}
\lambda_1=\lambda_3=0, \;
\lambda_2=t(1-y), \\
\lambda_4=yt, \;
\lambda_5=x(1-t), \\
\lambda_6=(1-x)(1-t), \\
\lambda_i\geq 0,\; i=1,\ldots,6\,.
\end{array}
\right.
\end{array}
\end{equation}
As $(x,y,t) \in [0,1]^3$,  System (\ref{S_0'z=1})   is  solvable and we go to Step~3. \\
\emph{ Step~3.}
We denote by $\Lambda$  and $\mathcal{S}$ the vector of unknowns $(\lambda_1,\ldots,\lambda_6)$ and the set of solutions of System~(\ref{S_0'z=1}), respectively. We consider the  functions  given in (\ref{EQ:PHIFIGII}). From System~(\ref{S_0'z=1}), we  obtain:
 $\Phi_{1}(\Lambda)=0+0+x(1-t)+(1-x)(1-t)=1-t$,  $\Phi_{2}(\Lambda)=\Phi_{4}(\Lambda)=0+t(1-y)+0+yt=t$,  $\Phi_3(\Lambda)=0+t(1-y)+0+yt+x(1-t)+(1-x)(1-t)=1$, $\forall \Lambda \in \mathcal{S}$.
Then,
 $M_1=1-t$,  $M_2=M_4=t$, and $M_3=1$. If $t>0$, then $M_{4}>0$ and we are in the first case  of Step 3. Thus,  the \emph{procedure stops} and yields $z''=1$ as \emph{output}.  If $t=0$, then
$M_{1}>0,M_{3}>0$ and $M_{2}=M_{4}=0$. Hence, we are in the third case of Step 3 with $J = \{2\}, I_0=\{2,4\}$ and the procedure restarts with Step 0, with $\F$ replaced by  $\F_J=(E_2|H_2)=(\widebar{B}|A)$ and $\P$ replaced by $\P_J=y$.\\
\emph{(2\textsuperscript{nd} cycle) Step 0.\,} 
This is the same as the (2\textsuperscript{nd} cycle)  Step 0 in the proof of $z'$. \\
\emph{(2\textsuperscript{nd} cycle) Step 1.\,} By setting  $z=1$ in  System (\ref{S_0'}), we   obtain
\begin{equation}\label{S_1'z=1}
\left\{
\begin{array}{ll}
\lambda_1+\lambda_3=0, \;\;
\lambda_4=y,\;\;
\lambda_2=1-y, \;\;
\lambda_i\geq 0,\;\; i=1,\ldots,4\,.
\end{array}
\right.
\end{equation}
As $y \in [0,1]$,  System (\ref{S_1'z=1})  is always solvable; thus, we go to Step 3. \\
\emph{(2\textsuperscript{nd} cycle) Step 3.} Like in  the (2\textsuperscript{nd} cycle)  Step 3 of the proof of $z'$,
we obtain $M_4=1$. Thus, the procedure stops and yields   $z''=1$ as output.
To summarize,  for any  assessment  $(x,y,t)\in[0,1]^3$ on $(B|C,\widebar{B}|A,A|(A\vee C))$, we have computed the  coherent upper probability bound $z''$ on $\widebar{C}|A$, which is always $z''=1$.
\end{proof}
\begin{remark}
We observe that in Theorem~\ref{THM:PROPF2}  we do not presuppose, differently from the classical approach, positive probability for the conditioning events ($A$ and $C$). 
For example,  even if we assume $p(A|(A\vee C))=t>0$ we do not require positive probability for the conditioning event $A$, and $p(A)$ could be zero (indeed, since $p(A)=p(A\wedge (A\vee C))=p(A|(A\vee C))p(A\vee C)$, $p(A)>0$ implies  $p(A|(A\vee C))>0$, but not \emph{vice versa}).
\end{remark}

The next result  is based on Theorem~\ref{THM:PROPF2} and presents the set of the coherent extensions of a given interval-valued probability assessment $\mathcal{I}=([x_1,x_2]\times [y_1,y_2]\times [t_1,t_2])\subseteq [0,1]^3$  on the sequence  on $(B|C,\widebar{B}|A,A|(A\vee C))$ to the further conditional event
$\widebar{C}|A$.

\begin{theorem}\label{THM:PROPWTIV}
    Let $A,B,C$ be three logically independent events and $\mathcal{I}=([x_1,x_2]\times [y_1,y_2]\times [t_1,t_2])\subseteq [0,1]^3$  be an imprecise    assessment on  $(B|C,\widebar{B}|A,A|(A\vee C))$. Then, the set $\Sigma$ of the coherent extensions of $\mathcal{I}$ on $\widebar{C}|A$ is the interval  $[z^{*},z^{**}]$, where 
	\[
\begin{array}{ll}
[z^*,z^{**}] =
\left\{
\begin{array}{cl}
\displaystyle\left[0,1\right], &  \mbox{if } (x_2+y_2t_1\geq  t_1) \wedge (x_1+y_1t_1\leq  1),\\~
\displaystyle\big [\frac{x_1 + y_1t_1 - 1}{t_1 x_1}, 1\big] \,, &\mbox{if } x_1+y_1t_1> 1, \\~
\displaystyle\big[\frac{t_1-x_2-y_2t_1 }{t_1 (1-x_2)}, 1\big] \,, & \mbox{if } x_2+y_2t_1< t_1. 
\end{array}
\right.
\end{array}
\]
\end{theorem}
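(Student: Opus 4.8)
The plan is to reduce the interval-valued extension to the precise one of Theorem~\ref{THM:PROPF2} and then optimize. By Proposition~\ref{PROP:FIGUREIIBIS} every precise assessment in $[0,1]^{3}$ is coherent, so $\Pi=[0,1]^{3}$ and hence $\Pi\cap\mathcal{I}=\mathcal{I}$; in particular $\mathcal{I}$ is $t$-coherent. Writing $z'(x,y,t),z''(x,y,t)$ for the bounds given by Theorem~\ref{THM:PROPF2}, the definition of the set of coherent extensions (\ref{EQ:UEXT}) gives $\Sigma=\bigcup_{(x,y,t)\in\mathcal{I}}[z'(x,y,t),z''(x,y,t)]$. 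Since Theorem~\ref{THM:PROPF2} always returns $z''=1$, every interval in this union shares the right endpoint $1$, so $\Sigma$ is itself an interval $[z^{*},z^{**}]$ with $z^{**}=1$ and $z^{*}=\min_{(x,y,t)\in\mathcal{I}}z'(x,y,t)$; the minimum is attained because $z'$ is continuous on the compact box $\mathcal{I}$ (the two non-trivial branches glue continuously to $0$ along $x+yt=t$ and $x+yt=1$). It therefore remains to minimize $z'$ over $\mathcal{I}$, and one sanity check is that only $t_{1}$ (never $t_{2}$) should survive, since $z'$ turns out to be monotone in $t$.

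First I would determine when $z^{*}=0$. By Theorem~\ref{THM:PROPF2}, $z'(x,y,t)=0$ exactly on the region $t\le x+yt\le 1$, so $z^{*}=0$ iff this region meets $\mathcal{I}$. Fixing $t=t_{1}$ and letting $(x,y)$ range over $[x_{1},x_{2}]\times[y_{1},y_{2}]$, the quantity $x+yt_{1}$ is continuous and increasing in both $x$ and $y$, so it takes exactly the values in $[x_{1}+y_{1}t_{1},\,x_{2}+y_{2}t_{1}]$; this interval meets $[t_{1},1]$ iff $x_{1}+y_{1}t_{1}\le 1$ and $x_{2}+y_{2}t_{1}\ge t_{1}$. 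Conversely, any feasible point $(x,y,t)\in\mathcal{I}$ forces both inequalities: from $x+yt\le 1$ and monotonicity, $x_{1}+y_{1}t_{1}\le x+yt\le 1$; while from $x+yt\ge t$, i.e.\ $x\ge t(1-y)$, together with $x\le x_{2}$, $t\ge t_{1}$ and $1-y\ge 1-y_{2}$, one gets $x_{2}\ge t_{1}(1-y_{2})$, that is $x_{2}+y_{2}t_{1}\ge t_{1}$. This reproduces the first case of the statement.

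If instead $z^{*}>0$, exactly one of these two inequalities fails, and the two failures are mutually exclusive (their conjunction would force $t_{1}>1$). When $x_{1}+y_{1}t_{1}>1$ the whole box lies in the branch $x+yt>1$, where $z'=\tfrac{x+yt-1}{tx}$; I would check that its partial derivatives $\partial_{y}z'=1/x$, $\partial_{x}z'=(1-yt)/(tx^{2})$ and $\partial_{t}z'=(1-x)/(t^{2}x)$ are all $\ge 0$ on $[0,1]^{3}$, so the minimum sits at $(x_{1},y_{1},t_{1})$, giving $z^{*}=\tfrac{x_{1}+y_{1}t_{1}-1}{t_{1}x_{1}}$. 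When $x_{2}+y_{2}t_{1}<t_{1}$ the box lies in the branch $x+yt<t$, where $z'=\tfrac{t-x-yt}{t(1-x)}$; here the partials satisfy $\partial_{y}z'\le 0$, $\partial_{x}z'\le 0$ and $\partial_{t}z'\ge 0$, so the minimum is at $(x_{2},y_{2},t_{1})$, giving $z^{*}=\tfrac{t_{1}-x_{2}-y_{2}t_{1}}{t_{1}(1-x_{2})}$. One checks along the way that the denominators never vanish: $x_{1}+y_{1}t_{1}>1$ forces $t_{1}>0$ and $x_{1}>0$, while $x_{2}+y_{2}t_{1}<t_{1}$ forces $t_{1}>0$ and $x_{2}<1$.

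The main obstacle is the joint-feasibility step for $z^{*}=0$: the two boundary inequalities are each obtained by pushing different variables to their extremes, so it is not a priori clear that a single point of $\mathcal{I}$ satisfies both. The device that makes it work is to freeze $t=t_{1}$ and use that $x+yt_{1}$, being monotone and continuous on the connected rectangle $[x_{1},x_{2}]\times[y_{1},y_{2}]$, sweeps out the full interval $[x_{1}+y_{1}t_{1},\,x_{2}+y_{2}t_{1}]$, so that meeting the target $[t_{1},1]$ is equivalent to the two endpoint conditions. The remaining optimizations in the two positive cases are then routine sign checks on the partial derivatives of the two rational expressions, and exhaustiveness of the three cases follows from the mutual exclusivity noted above.
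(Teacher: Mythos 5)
Your proposal is correct and follows essentially the same route as the paper's proof: reduce to the pointwise bounds of Theorem~\ref{THM:PROPF2} via the total coherence of $\mathcal{I}$ (Proposition~\ref{PROP:FIGUREIIBIS}), note that $z^{**}=1$ in every case, and compute $z^{*}$ by monotonicity of the two rational branches, locating the minimizer at $(x_1,y_1,t_1)$ or $(x_2,y_2,t_1)$ respectively. The only difference is one of detail: where the paper simply asserts in the first case that some $(x,y,t)\in\mathcal{I}$ satisfies $t\le x+yt\le 1$, you supply an explicit intermediate-value argument at $t=t_1$ together with the converse and the mutual exclusivity of the three cases, which is a welcome tightening rather than a different method.
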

\begin{proof}
As from Proposition \ref{PROP:FIGUREIIBIS} the set $[0,1]^3$ on $(B|C,\widebar{B}|A,A|(A\vee C))$ is totally coherent, then 
$\mathcal{I}$ is totally coherent too. Then,  $\Sigma=\bigcup_{\P\in \I}[z_{\P}',z_{\P}'']=[z^*,z^{**}]$, 
where $z^*=\inf_{\P\in \I}z_{\P}'$ 
(i.e., 
$z^*=\inf\{z_{\P}':\P\in \I\}$)	and $z^{**}=\sup_{\P\in \I}z_{\P}''$ 
(i.e., 
$z^{**}=\sup\{z_{\P}':\P\in \I\}$).
We distinguish three alternative cases: $(i)$ $x_1+y_1t_1> 1$; $(ii)$ $x_2+y_2t_1< t_1$; $(iii)$ $(x_2+y_2t_1\geq  t_1) \wedge (x_1+y_1t_1\leq  1)$.\\
Of course, for all three cases $z^{**}=\sup_{\P\in \I} z''_{\P}=1$.\\
Case $(i)$. We observe that  the function  $x+yt:[0,1]^3$ is  nondecreasing in the arguments  $x,y,t$.
Then, in this case, $x+yt\geq x_1+y_1t_1> 1$ for every $\P=(x,y,t)\in \mathcal{I}$ and hence by Theorem~\ref{THM:PROPF2}
$z'_{\P}=f(x,y,t)=\frac{x + yt - 1}{t\, x}$ for every $\P\in \mathcal{I}$. 
Moreover, $f(x,y,t):[0,1]^3$  is nondecreasing in the arguments  $x,y,t$, thus $z^*=\frac{x_1 + y_1t_1 - 1}{t_1 x_1}$.\\ 
Case $(ii)$. We observe that  the function  $x+yt-t:[0,1]^3$ is  nondecreasing in the arguments  $x,y$ and nonincreasing in the argument $t$.
Then, in this case, $x+yt-t\leq x_2+y_2t_1-t_1<0$ for every $\P=(x,y,t)\in \mathcal{I}$ and hence by Theorem~\ref{THM:PROPF2}
$z'_{\P}=g(x,y,t)=\frac{t-x-yt }{t (1-x)}$ for every $\P\in \mathcal{I}$. 
Moreover, $g(x,y,t):[0,1]^3$  is nonincreasing in the arguments  $x,y$ and nondecreasing in the argument $t$.
Thus, $z^*=\frac{t_1-x_2-y_2t_1 }{t_1 (1-x_2)}$. 
Case $(iii)$. In this case there exists a vector $(x,y,t)\in \mathcal{I}$ such that $t\leq x+yt\leq 1$ and hence by Theorem~\ref{THM:PROPF2}
$z'_{\P}=0$. Thus, $z^*=0$.
\end{proof}
\begin{remark}\label{REM:AFF}
By instantiating  Theorem \ref{THM:PROPWTIV} with the imprecise assessment $\I=\{1\}\times[y_1,1]\times[t_1,1]$, where $t_1>0$, we obtain the following lower and upper bounds for the conclusion $[z^*,z^{**}]=[y_1,1]$.  Thus, for every $t_1>0$: $z^*$ depends only on the value of $y_1$.
\end{remark}

\subsection{Traditionally valid syllogisms of Figure II.}
\label{SEC:FIGIISYLLO}
In this section we consider the probabilistic interpretation of the traditionally valid  syllogisms of Figure II (Camestres, Camestrop, Baroco, Cesare, Cesaro, Festino; see Table 
\ref{TAB:AristSyl}).  Like in Figure I, all  syllogisms of Figure II without  existential import assumptions are probabilistically non-informative. Indeed, by instantiating $S$, $M$, $P$ for $A$, $B$, $C$, respectively, in Proposition \ref{PROP:FIGUREII},  we observe that  the imprecise assessment $[0,1]^3$ on $(M|P,\no{M}|S,\no{P}|S)$ is t-coherent.  For instance,  Camestres
(``\emph{Every $P$ is $M$, No $S$ is $M$, therefore No $S$ is $P$}'') without  existential import assumption corresponds to the probabilistically non-informative 
inference:  from the premises $p(M|P)=1$ and $p(\widebar{M}|S)=1$ infer that every $p(\no{P}|S)\in[0,1]$ is coherent (see Proposition \ref{PROP:FIGUREII}). 
Therefore we add the conditional event  existential import assumption:  $p(S|(S \vee P))>0$ (see Definition \ref{DEF:CEI}). In what follows, we
 construct  (s-)valid versions of the traditionally valid syllogisms  of Figure II,  by  suitable instantiations in Theorem~\ref{THM:PROPF2}.
\paragraph{Camestres} 
By instantiating $S,M,P$ in Theorem~\ref{THM:PROPF2} for $A,B,C$ with $x=y=1$ and $t>0$
it follows that  $z'=\frac{x + yt - 1}{t\, x}=1$ and $z''=1$. Then,  the set $\Sigma$    of coherent extensions on $\no{P}|S$ of the  imprecise assessment $\{1\}\times\{1\}\times(0,1]$ on  $(M|P,\no{M}|S,S|(S\vee P))$ is $\Sigma=\{1\}$.
Thus, by Definition \ref{DEF:VALID},
\begin{equation}\label{EQ:CAMESTRESIP}
\{1\}\times\{1\}\times(0,1] \mbox{ on } (M|P,\no{M}|S,S|(S\vee P)) \;\models_s\; \{1\} \mbox{ on } \no{P}|S. \end{equation}
In terms of probabilistic constraints, (\ref{EQ:CAMESTRESIP}) can be equivalently expressed by (see Table~\ref{Table:BS}) 
\begin{equation}\label{EQ:CAMESTRES} 
(p(M|P)=1, p(M|S)=0, p(S|(S \vee P))>0) \,\models_s \, p(P|S)= 0\,,
\end{equation}
which is a  s-valid version of Camestres.
\paragraph{Camestrop}
From (\ref{EQ:CAMESTRES}), by weakening the conclusion of Camestres, it  follows that 
\begin{equation}\label{EQ:CAMESTROP*}
(p(M|P)=1, p(M|S)=0, p(S|(S \vee P))>0)\; \models \; p(P|S)< 1\,,
\end{equation}
which is equivalent to 
\begin{equation}\label{EQ:CAMESTROP}
(p(M|P)=1, p(M|S)=0, p(S|(S \vee P))>0)\; \models \; p(\no{P}|S)> 0.
\end{equation}
Inference (\ref{EQ:CAMESTROP})
 is a valid (but not s-valid) version of  Camestrop (\emph{Every $P$ is $M$, No $S$ is $M$, therefore Some $S$ is not $P$}). 

\paragraph{Baroco}
By instantiating $S,M,P$ in Theorem~\ref{THM:PROPF2} for $A,B,C$ with $x=1$, any $y>0$, and  any $t>0$,
it follows that  $z'=\frac{x + yt - 1}{t\, x}=\frac{1+yt-1}{t}=y>0$. Then,  the set $\Sigma$    of coherent extensions on $\no{P}|S$ of the  imprecise assessment 
$\{1\}\times(0,1]\times(0,1]$ 
on $(M|P,\no{M}|S,S|(S\vee P))$
is $\Sigma=\bigcup_{
	\{(x,y,t)\in \{1\}\times (0,1]\times (0,1]\}} [y,1]=(0,1]$.
Thus, by Definition \ref{DEF:VALID},
\begin{equation}\label{EQ:BAROCOIP}
\{1\}\times(0,1]\times(0,1] \mbox{ on } (M|P,\no{M}|S,S|(S\vee P)) \;\models_s\; (0,1] \mbox{ on } \no{P}|S. \end{equation}
In terms of probabilistic constraints, (\ref{EQ:BAROCOIP}) can be expressed by,
\begin{equation}\label{EQ:BAROCO}
(p(M|P)=1, p(\no{M}|S)>0, p(S|(S \vee P))>0)\; \models_s \; p(\no{P}|S)> 0\,.
\end{equation}
Therefore, inference (\ref{EQ:BAROCO}) is a s-valid version of Baroco (\emph{Every $P$ is $M$, Some $S$ is not $M$, therefore Some $S$ is not $P$}). 
 Notice that Camestrop (\ref{EQ:CAMESTROP})  also follows from Baroco (\ref{EQ:BAROCO}) by strengthening the  minor premise.
\paragraph{Cesare} 
By instantiating $S,M,P$ in Theorem~\ref{THM:PROPF2} for $A,B,C$ with $x=y=0$ and any $t>0$,
it follows that  $z'=\frac{t-x-yt }{t\, (1-x)}=1$ (and $z''=1$). 
Then,  the set $\Sigma$    of coherent extensions on $\no{P}|S$ of the  imprecise assessment 
$\{0\}\times\{0\}\times(0,1]$ 
on $(M|P,\no{M}|S,S|(S\vee P)$
is $\Sigma=\{1\}$.
Thus, by Definition \ref{DEF:VALID},
\begin{equation}\label{EQ:CESAREIP}
\{0\}\times\{0\}\times(0,1] \mbox{ on } (M|P,\no{M}|S,S|(S\vee P)) \;\models_s\; \{1\} \mbox{ on } \no{P}|S. \end{equation}
In terms of probabilistic constraints, (\ref{EQ:CESAREIP}) can be expressed by,
\begin{equation*}
(p(M|P)=0, p(\no{M}|S)=0, p(S|(S \vee P))>0)\; \models_s \; p(\widebar{P}|S)=1 \,,
\end{equation*}
or equivalently by 
\begin{equation}\label{EQ:CESARE}
(p(M|P)=0, p(M|S)=1, p(S|(S \vee P))>0)\; \models_s \; p(P|S)=0 \,.
\end{equation}
Therefore, inference (\ref{EQ:CESARE}) is a s-valid version of Cesare
(\emph{No $P$ is $M$, Every $S$ is $M$, therefore No $S$ is $P$}).
Notice that Cesare is equivalent to Camestres, because 
(\ref{EQ:CESARE}) 
is equivalent to 
(\ref{EQ:CAMESTRES})
when  $M$ is replaced by  $\no{M}$ (and the  probabilities are adjusted accordingly).

\paragraph{Cesaro}
From (\ref{EQ:CESARE}), by weakening the conclusion of Cesare, it  follows that 
\begin{equation}\label{EQ:CESARO}
(p(M|P)=0, p(M|S)=1, p(S|(S \vee P))>0) \; \models \; p(\widebar{P}|S)>0\,,
\end{equation}
which is a valid (but not s-valid) version of Cesaro (\emph{No $P$ is $M$, Every $S$ is $M$, therefore Some $S$ is not $P$}). 
Notice that Cesaro is equivalent to Camestrop, because 
(\ref{EQ:CESARO}) 
is equivalent to 
(\ref{EQ:CAMESTROP})
when  $M$ is replaced by  $\no{M}$.

\paragraph{Festino}
By instantiating $S,M,P$ in Theorem~\ref{THM:PROPF2} for $A,B,C$ with $x=0$, any $y<1$ and any $t>0$, as $x+yt<t$,
it follows that  $z'=\frac{t-x-yt }{t\, (1-x)}=\frac{t-yt}{t}>1-y>0$ (and $z''=1$).
Then,  the set $\Sigma$    of coherent extensions on $P|S$ of the  imprecise assessment $\{0\}\times[0,1)\times(0,1]$ on  $(M|P,\no{M}|S,S|(S\vee P))$ is $\Sigma=\bigcup_{
	\{(x,y,t)\in \{0\}\times [0,1)\times (0,1]\}
	} [1-y,1]=(0,1]$.
Thus, by Definition \ref{DEF:VALID},
\begin{equation}\label{EQ:FESTINOIP}
\{0\}\times[0,1)\times(0,1] \mbox{ on } (M|P,\no{M}|S,S|(S\vee P)) \;\models_s\; (0,1] \mbox{ on } \no{P}|S. \end{equation}
In terms of probabilistic constraints, (\ref{EQ:FESTINOIP}) can be equivalently expressed by 
\begin{equation}\label{EQ:FESTINO} 
(p(M|P)=0, p(M|S)>0, p(S|(S \vee P))>0) \,\models_s \, p(\no{P}|S)>0 \,,
\end{equation}
which is a  s-valid version of Festino 
(\emph{No $P$ is $M$, Some $S$ is  $M$, therefore Some $S$ is not $P$}). 
Notice that Festino is equivalent to Baroco, because
(\ref{EQ:FESTINO}) 
is equivalent to 
(\ref{EQ:BAROCO})
when  $M$ is replaced by  $\no{M}$.
Cesaro (\ref{EQ:CESARO}) also follows from Festino (\ref{EQ:FESTINO}) by strengthening the minor premise.

\begin{remark}
We observe that, traditionally, the conclusions of logically valid Aristotelian syllogisms of Figure II are neither  in the form of sentence type  I (\emph{some}) nor of A (\emph{every}). In terms of our probability semantics, indeed, this must be the case even if  the existential import assumption $p(S|(S \vee P))>0$ is made:
according to Theorem~\ref{THM:PROPF2}, the upper  bound for the conclusion $p(\widebar{P}|S)$ is always 1; thus, neither sentence type I ($p(P|S)>0$, i.e. $p(\widebar{P}|S)<1$) nor sentence type A ($p(P|S)=1$, i.e. $p(\widebar{P}|S)=0$) can be validated.
\end{remark}

\section{Figure III.}
\label{SEC:FIGUREIII}
In this section, we 
 observe that the probabilistic inference of $C|A$ from the premise set $(C|B, A|B)$, which corresponds to the general form of syllogisms of Figure III, is probabilistically non-informative (Proposition \ref{PROP:FIGUREIII}).
 Therefore, we add the probabilistic constraint $p(B|(A \vee  B))>0$, as conditional event existential import assumption,  to obtain probabilistic informativeness. 
Then, we prove  the precise and imprecise probability propagation rules for the inference from $(C|B,A|B,B|(A \vee  B))$ to  $C|A$.
We apply these results  in Section~\ref{SEC:FIGIIISYLLO}, where we  study the valid syllogisms of Figure III.
\subsection{Coherence and  probability propagation   in Figure III.}
\begin{proposition}\label{PROP:FIGUREIII}
	Let $A,B,C$ be logically independent events.
	The imprecise assessment $[0,1]^3 $ on $ \mathcal{F}=(C|B,A|B,C|A)$ is totally  coherent. 
\end{proposition}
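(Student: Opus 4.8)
The plan is to follow the same scheme used for Propositions \ref{PROP:FIGUREI} and \ref{PROP:FIGUREII}: show that every precise assessment $\P=(x,y,z)\in[0,1]^3$ on $\F=(C|B,A|B,C|A)$ is coherent, which by the definition of total coherence immediately yields that $[0,1]^3$ on $\F$ is t-coherent. First I would fix an arbitrary $\P=(x,y,z)\in[0,1]^3$ and write down the constituents $C_h$ generated by $\F$ together with their associated points $Q_h$. Since the conditioning events are $B$ (twice) and $A$, the disjunction is $\mathcal{H}_3=A\vee B$, and the constituents contained in it are $ABC$, $AB\no{C}$, $A\no{B}C$, $A\no{B}\no{C}$, $\no{A}BC$, $\no{A}B\no{C}$, with $C_0=\no{A}\,\no{B}$. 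The only slightly delicate part of this step is filling in the rows for the constituents contained in $\no{B}$, where the first two coordinates take the symbolic values $x$ and $y$; in particular $Q_3=(x,y,1)$ for $A\no{B}C$ and $Q_4=(x,y,0)$ for $A\no{B}\no{C}$, while $Q_5=(1,0,z)$ and $Q_6=(0,0,z)$ carry the symbol $z$ in the conclusion coordinate.

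The key observation is then that $\P=zQ_3+(1-z)Q_4$, exactly in the spirit of the Figure~I argument where $\P$ was written as a convex combination of two $Q_h$ differing only in the conclusion coordinate. This exhibits the solution $\Lambda=(0,0,z,1-z,0,0)$ of system $(\Systemsigma)$, so the system is solvable. Evaluating the functions $\Phi_j$ of (\ref{EQ:PHI}) on the singleton set $\mathcal{S}'=\{(0,0,z,1-z,0,0)\}$, I would find $\Phi_3(\Lambda)=\lambda_1+\lambda_2+\lambda_3+\lambda_4=1>0$ (the conditioning event of $C|A$ is $A$) while $\Phi_1(\Lambda)=\Phi_2(\Lambda)=\lambda_1+\lambda_2+\lambda_5+\lambda_6=0$ (the conditioning event of both $C|B$ and $A|B$ is $B$). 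Hence $M_3'>0$ and $M_1'=M_2'=0$, so that $I_0'=\{1,2\}$ in the sense of (\ref{EQ:I0'}).

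By Theorem \ref{COER-P0'}, since $(\Systemsigma)$ is solvable and $I_0'=\{1,2\}$, coherence of $\P$ reduces to coherence of the sub-assessment $\P_0'=(x,y)$ on $\F_0'=(C|B,A|B)$. Here both conditional events share the conditioning event $B$, so the four relevant constituents contained in $B$ yield the points $(1,1),(0,1),(1,0),(0,0)$, whose convex hull is the whole unit square $[0,1]^2$; thus $(x,y)\in[0,1]^2$ lies in the hull and the associated system is solvable. Moreover every such constituent is contained in $B$, so the relevant $\Phi$'s equal $1$ for every solution and the corresponding $I_0$ of (\ref{EQ:I0}) is empty, whence $(x,y)$ is coherent by Theorem \ref{COER-P0}. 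Therefore $\P=(x,y,z)$ is coherent for every $(x,y,z)\in[0,1]^3$, which is precisely t-coherence of $[0,1]^3$ on $\F$. I do not expect any genuine obstacle: the proof is structurally identical to Proposition \ref{PROP:FIGUREI}, the only real content being the correct constituent table and the convex-combination identity $\P=zQ_3+(1-z)Q_4$.
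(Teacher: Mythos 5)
Your proof is correct: the constituent table, the convex--combination identity $\P=zQ_3+(1-z)Q_4$, the identification $I_0'=\{1,2\}$, and the reduction via Theorem \ref{COER-P0'} to the trivially coherent sub-assessment $(x,y)$ on $(C|B,A|B)$ all check out. However, the paper takes a much shorter route: it observes that exchanging the (logically independent) events $A$ and $B$ and reordering the sequence turns $(C|B,A|B,C|A)$ into $(C|B,B|A,C|A)$, so Proposition \ref{PROP:FIGUREIII} is literally equivalent to Proposition \ref{PROP:FIGUREI}, and no new computation is needed. Your direct argument buys self-containedness and makes explicit where the degeneracy lies (the conclusion's conditioning event $A$ can be given full mass on $\no{B}$, killing both premises), at the cost of redoing work; the paper's relabelling trick buys brevity, and the paper itself notes that this symmetry is special to this statement --- it fails for the companion Proposition \ref{PROP:FIGUREIIIBIS}, where a direct computation of the kind you carried out becomes unavoidable.
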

\begin{proof}
	By exchanging $B$ and $A$ and by reordering the sequence $\mathcal{F}$, Proposition \ref{PROP:FIGUREIII} is equivalent to 
 Proposition \ref{PROP:FIGUREI}.
\end{proof}	

Now we show that the  imprecise assessment  $[0,1]^3$ on the sequence of conditional events $(C|B,A|B,B|(A\vee B))$ is t-coherent. Note that  the  strategy used in the proof of Proposition \ref{PROP:FIGUREIII}  cannot be applied  for  proving Proposition \ref{PROP:FIGUREIIIBIS}.
\begin{proposition}\label{PROP:FIGUREIIIBIS}
	Let $A,B,C$ be logically independent events.
	The imprecise assessment $[0,1]^3 $ on $ \mathcal{F}=(C|B,A|B,B|(A\vee B))$ is totally  coherent. 
\end{proposition}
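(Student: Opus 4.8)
The plan is to show that \emph{every} precise assessment $\P=(x,y,t)\in[0,1]^3$ on $\F=(C|B,A|B,B|(A\vee B))$ is coherent; by the definition of t-coherence this is exactly what is needed, since logical independence gives $\Pi\neq\emptyset$ and the claim amounts to $\Pi=[0,1]^3$. I would follow the constituent/convex-hull method already used in Propositions \ref{PROP:FIGUREIBIS} and \ref{PROP:FIGUREIIBIS}. First I would list the constituents of $\F$ contained in $\mathcal{H}_3=A\vee B$, namely $ABC,\,AB\no{C},\,A\no{B}C,\,A\no{B}\no{C},\,\no{A}BC,\,\no{A}B\no{C}$, together with $C_0=\no{A}\,\no{B}$, and record the associated points $Q_h$, obtaining $Q_1=(1,1,1)$, $Q_2=(0,1,1)$, $Q_3=Q_4=(x,y,0)$, $Q_5=(1,0,1)$, $Q_6=(0,0,1)$, and $Q_0=(x,y,t)$. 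The feature worth flagging is that, since the first two conditional events share the conditioning event $B$, their conditioning-indicator rows coincide, and since the third conditioning event is $A\vee B$, its indicator equals $1$ on all six constituents.

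Next I would write the system $(\Systemsigma)$, namely $\P=\sum_{h=1}^6\lambda_hQ_h$, $\sum_h\lambda_h=1$, $\lambda_h\ge0$, and solve it symbolically. The key simplification is that the normalization together with the third coordinate equation force $\lambda_1+\lambda_2+\lambda_5+\lambda_6=t$ and $\lambda_3+\lambda_4=1-t$, after which the first and second coordinate equations reduce to $\lambda_1+\lambda_5=xt$ and $\lambda_1+\lambda_2=yt$. Treating $\lambda_1$ as the free parameter fixes all remaining unknowns, and nonnegativity collapses to the single requirement $\max\{0,t(x+y-1)\}\le\lambda_1\le\min\{xt,yt\}$. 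I would then check that this interval is nonempty: the inequalities $t(x+y-1)\le xt$ and $t(x+y-1)\le yt$ reduce to $t(y-1)\le0$ and $t(x-1)\le0$, which hold because $x,y\le1$ and $t\ge0$. Hence $(\Systemsigma)$ is solvable for every $(x,y,t)\in[0,1]^3$.

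The decisive observation is that on \emph{every} solution the functions $\Phi_j$ of \eqref{EQ:PHI} are constant: $\Phi_3=\lambda_1+\cdots+\lambda_6=1$, while $\Phi_1=\Phi_2=\lambda_1+\lambda_2+\lambda_5+\lambda_6=t$, the latter being exactly the third coordinate equation. Therefore $M_3=1>0$ and $M_1=M_2=t$. When $t>0$ we get $I_0=\emptyset$, so Theorem \ref{COER-P0} yields coherence of $\P$ at once. The genuinely degenerate case — which is also where the shortcut of Proposition \ref{PROP:FIGUREIII} fails — is $t=0$: here $M_1=M_2=0$, so $I_0=\{1,2\}$ and clause (ii) of Theorem \ref{COER-P0} reduces the problem to the coherence of the sub-assessment $\P_0=(x,y)$ on $\F_0=(C|B,A|B)$. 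I would dispatch this by the same constituent argument: the four constituents of $(C|B,A|B)$ inside $B$ give points $(1,1),(0,1),(1,0),(0,0)$ whose convex hull is $[0,1]^2$, and the shared conditioning event $B$ makes the relevant $\Phi$'s equal to $1$, so the new $I_0$ is empty and $(x,y)$ is coherent. Combining the two cases shows every $\P\in[0,1]^3$ is coherent, i.e. $[0,1]^3$ on $\F$ is t-coherent.

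I expect the main obstacle to be bookkeeping rather than conceptual: correctly reading off the points $Q_h$ (in particular that $Q_3=Q_4=(x,y,0)$ and that the $B$-row is shared by the first two events), and then routing the $t=0$ branch through the recursive clause (ii) of Theorem \ref{COER-P0} instead of attempting to certify coherence in a single pass. Everything else is a short linear computation.
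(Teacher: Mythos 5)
Your proof is correct and follows essentially the same route as the paper: solve $(\Systemsigma)$ symbolically, observe $\Phi_1=\Phi_2=t$ and $\Phi_3=1$ on every solution, and handle $t=0$ via clause (ii) of Theorem \ref{COER-P0} by checking the sub-assessment $(x,y)$ on $(C|B,A|B)$ (which the paper simply calls well known). The only cosmetic difference is that you list $A\no{B}C$ and $A\no{B}\no{C}$ as two constituents with the coinciding point $(x,y,0)$, whereas the expansion actually yields the single constituent $A\no{B}$; this redundancy changes nothing in the convex-hull argument.
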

\begin{proof} 
	Let $\P=(x,y,t)\in[0,1]^3$ be a probability assessment on $\mathcal{F}$.
	The constituents $C_h$ and  the  points $Q_h$ associated with   $(\F,\P)$ are
	given in Table \ref{TAB:TABLE_FIGUREIIIPREMISE}. 
	\begin{table}[!h]
		\begin{minipage}{\textwidth}\centering 
			\caption{Constituents $C_h$ and  points $Q_h$ associated with  the probability  assessment    $\mathcal{P}=(x,y,t)$  on 
				$\F=(C|B,A|B,B|(A\vee B))$ involved in the premise set of Figure~III.
			}
			\label{TAB:TABLE_FIGUREIIIPREMISE}
			\begin{tabular}{llll}\hline
		& $C_h$            & $Q_h$                          &  \\
		\hline
		$C_1$ \, & $ABC$         \,  & $(1,1,1)$ \,                   & $Q_1$   \\
		$C_2$ & $AB\no{C}$      & $(0,1,1)$                    & $Q_2$   \\
		$C_3$ & $A\no{B}$       & $(x,y,0)$                    & $Q_3$   \\
		$C_4$ & $\no{A}BC$ & $(1,0,1)$                    & $Q_4$   \\
        $C_5$ & $\no{A}B\no{C}$ & $(0,0,1)$                    & $Q_5$   \\		
		$C_0$ & $\no{A} \, \no{B}$   &$(x,y,t)$                       & $Q_0=\mathcal{P} $  \\\hline
	\end{tabular}
		\end{minipage}
	\end{table}
	By Theorem \ref{COER-P0}, coherence of  $\P=(x,y,z)$ on $\F$ requires that the following system is solvable
	\[\begin{array}{l}
	(\Systemsigma)  \hspace{1 cm}
	\mathcal{P}=\sum_{h=1}^5 \lambda_hQ_h,\;
	\sum_{h=1}^5 \lambda_h=1,\; \lambda_h\geq 0,\, h=1,\ldots,6,
	\end{array}
	\]
or equivalently
\begin{equation}\label{EQ:FIGURIIISYSTEMBIS}
\begin{array}{lll}
\left\{
\begin{array}{lllllll}
\lambda_1+\lambda_4=x(\lambda_1+\lambda_2+\lambda_4+\lambda_5), \\
\lambda_1+\lambda_2=y(\lambda_1+\lambda_2+\lambda_4+\lambda_5), \\
\lambda_1+\lambda_2+\lambda_4+\lambda_5=t(\lambda_1+\lambda_2+\lambda_3+\lambda_4+\lambda_5),\\
\lambda_1+\lambda_2+\lambda_3+\lambda_4+\lambda_5=1,\\
\lambda_i\geq 0,\; i=1,\ldots,5\,,
\end{array}
\right.
&  \Longleftrightarrow  &
\left\{
\begin{array}{lllllll}
\lambda_1+\lambda_4=xt, \\
\lambda_1+\lambda_2=yt, \\
\lambda_1+\lambda_2+\lambda_4+\lambda_5=t,\\
\lambda_3=1-t,\\
\lambda_i\geq 0,\; i=1,\ldots,5\,,
\end{array}
\right.
\end{array}
\end{equation}
that is
\[
\left\{
\begin{array}{lllllll}
\lambda_2=yt-\lambda_1, \\
\lambda_3=1-t,\\
\lambda_4=xt-\lambda_1, \\
\lambda_5=t-xt-yt+\lambda_1,\\
\lambda_i\geq 0,\; i=1,\ldots,5\,.
\end{array}
\right.
\]
System $(\Systemsigma) $ is solvable because  $t\max\{0,x+y-1\} \leq t\min\{x,y\}$, for every $(x,y,t)\in[0,1]^3$ and the set of  solutions  $\mathcal{S}$  consists of  the vectors $\Lambda=(\lambda_1,\ldots,\lambda_5)$ such that
\[
\left\{
\begin{array}{lllllll}
t\max\{0,x+y-1\}\leq \lambda_1\leq t\min\{x,y\},\\
\lambda_2=yt-\lambda_1, \\
\lambda_3=1-t,\\
\lambda_4=xt-\lambda_1, \\
\lambda_5=t-xt-yt+\lambda_1.\\
\end{array}
\right.
\]
Moreover, for each $\Lambda\in \mathcal{S}$ it holds that $\Phi_{1}(\Lambda)=\Phi_{2}(\Lambda)=\sum_{h:C_h\subseteq B}=\lambda_1+\lambda_2+\lambda_4+\lambda_5=t$, and $\Phi_{3}(\Lambda)=\sum_{h:C_h\subseteq A\vee B}\lambda_h=1$.
If $t>0$,  it follows that, for each $\Lambda\in \mathcal{S}$, $\Phi_{1}(\Lambda)=\Phi_{2}(\Lambda)>0$, and $\Phi_{3}(\Lambda)>0$. Then,
 $I_0=\emptyset$ and by Theorem \ref{COER-P0}, the assessment $(x,y,t)$ is coherent.
If $t=0$, it follows that  for each $\Lambda\in \mathcal{S}$, $\Phi_{1}(\Lambda)=\Phi_{2}(\Lambda)=0$. Then,  $I_0=\{1,2\}$ 
and as it is well known that the sub-assessment $(x,y)$ on $(C|B,A|B)$ is  coherent for every $(x,y)\in[0,1]^2$, it follows by Theorem \ref{COER-P0} that  $(x,y,t)$ is coherent.
 Then, $(x,y,t)$ is coherent for every $(x,y,t)\in[0,1]^3$.
\end{proof}

   The next  theorem presents the  coherent probability  propagation rules in  Figure~III under the conditional event existential import assumption. 
\begin{theorem}\label{THM:PROPF3}
	Let $A,B,C$ be three logically independent events and $(x,y,t)\in[0,1]^3$ be a (coherent)  assessment on the family $(C|B,A|B, B|(A\vee B))$. Then, the  extension $z=p(C|A)$ is coherent if and only if $z\in[z',z'']$, where 
	\[
	\small
	\begin{array}{ll}
	z'=
	\left\{
	\begin{array}{cl}
	0, &  \mbox{if } t(x+y-1)\leq 0, \\~
	\displaystyle \frac{t(x+y-1)}{1-t(1-y)}\,, &\mbox{if } t(x+y-1)>0,
	\end{array}
	\right. \;\;
	z''=	\left\{
	\begin{array}{cl}
	1, &  \mbox{if }  t(y-x)\leq 0, \\~
	\displaystyle 1-\frac{t(y-x)}{1-t(1-y)}\,, &\mbox{if } t(y-x)>0.
	\end{array}
	\right.\\
	\end{array}
	\]	
\end{theorem}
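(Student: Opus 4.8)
The plan is to apply Algorithm~\ref{Alg} symbolically to the assessment $(x,y,t)$ on $(C|B,A|B,B|(A\vee B))$ --- which is coherent by Proposition~\ref{PROP:FIGUREIIIBIS} --- and propagate it to the conclusion $z=p(C|A)$, exactly in the manner of the proof of Theorem~\ref{THM:PROPF2}. In \emph{Step~0} the conditioning event of the conclusion is $A$, so $\mathcal{H}_{n+1}=A\vee B$ and the constituents contained in it are $C_1=ABC$, $C_2=AB\no{C}$, $C_3=A\no{B}C$, $C_4=A\no{B}\no{C}$, $C_5=\no{A}BC$, $C_6=\no{A}B\no{C}$. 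The one genuinely new feature compared with the premise-only Table~\ref{TAB:TABLE_FIGUREIIIPREMISE} is that the conjunction $A\no{B}$ must now be split into $C_3$ and $C_4$, since $C|A$ distinguishes them although none of the premises does. Reading off the associated points gives $Q_1=(1,1,1,1)$, $Q_2=(0,1,1,0)$, $Q_3=(x,y,0,1)$, $Q_4=(x,y,0,0)$, $Q_5=(1,0,1,z)$, $Q_6=(0,0,1,z)$, and hence the starting system $(\Systemsigma)$ in $\lambda_1,\dots,\lambda_6,z$.

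For the \emph{lower bound} $z'$, setting $z=0$ forces $\lambda_1=\lambda_3=0$, and solving the remaining constraints yields the unique candidate $\lambda_2=yt$, $\lambda_4=1-t$, $\lambda_5=xt$, $\lambda_6=t(1-x-y)$, which is nonnegative precisely when $t(x+y-1)\le 0$. In that regime I pass to Step~3 and evaluate $\Phi_4(\Lambda)=\lambda_1+\lambda_2+\lambda_3+\lambda_4=1-t(1-y)$; this is positive (first case of Step~3, giving $z'=0$) except in the degenerate corner $t=1,\,y=0$, where $M_4=0$ while $M_1=M_2=M_3>0$ (second case of Step~3, giving $z'=0$ again). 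When instead $t(x+y-1)>0$ the system with $z=0$ is unsolvable and I take Step~2: minimize $\lambda_1+\lambda_3$ subject to the constraints with the conclusion's conditioning mass normalized, $\lambda_1+\lambda_2+\lambda_3+\lambda_4=1$. Elimination gives the total mass $\Sigma=\sum_h\lambda_h=\frac{1}{1-t(1-y)}$; the binding constraint is $\lambda_6=t\Sigma(1-x-y)+\lambda_1\ge 0$, forcing $\lambda_1\ge t\Sigma(x+y-1)$, while $\lambda_3=0$ is optimal, so $z'=\lambda_1+\lambda_3=\frac{t(x+y-1)}{1-t(1-y)}$.

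The \emph{upper bound} $z''$ is symmetric. Setting $z=1$ forces $\lambda_2=\lambda_4=0$, and the unique candidate $\lambda_1=yt$, $\lambda_3=1-t$, $\lambda_5=t(x-y)$, $\lambda_6=t(1-x)$ is nonnegative exactly when $t(y-x)\le 0$, in which case Step~3 again gives $M_4=1-t(1-y)$ and hence $z''=1$ (through the same two subcases). When $t(y-x)>0$, Step~2 maximizes $\lambda_1+\lambda_3$ under the same normalization; now the binding constraints are $\lambda_5=xt\Sigma-\lambda_1\ge 0$ (so $\lambda_1=xt\Sigma$, using $x<y$) and $\lambda_4=(1-t)\Sigma-\lambda_3\ge 0$ (so $\lambda_3=(1-t)\Sigma$), giving $z''=\Sigma\,(1-t(1-x))=\frac{1-t(1-x)}{1-t(1-y)}=1-\frac{t(y-x)}{1-t(1-y)}$, as claimed.

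The routine part is the elimination that produces $\Sigma$ and the closed forms. The step I expect to demand the most care is the two Step~2 linear programs: one must correctly identify which nonnegativity constraint is binding at the optimum and then verify that the chosen vertex is genuinely feasible (all six $\lambda_h\ge 0$) --- for $z'$ this reduces to checking $t\Sigma(x+y-1)\le\min\{xt\Sigma,yt\Sigma\}$, i.e.\ $x,y\le 1$. A secondary subtlety is the bookkeeping of the degenerate corner $t=1,\,y=0$ in Step~3, where the conclusion's conditioning event has upper probability zero so the algorithm exits through its second rather than its first case while still returning the boundary value. Throughout one also uses that $1-t(1-y)>0$ on the relevant ranges (it is $\ge y>0$ whenever $x+y>1$ or $y>x$), so the denominators are well defined.
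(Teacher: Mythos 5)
Your proposal is correct and follows essentially the same route as the paper: a symbolic run of Algorithm~\ref{Alg} through Steps 0--3 for each bound, with the $z=0$ (resp.\ $z=1$) system solvable exactly when $t(x+y-1)\le 0$ (resp.\ $t(y-x)\le 0$), the degenerate corner $t=1$, $y=0$ exiting through the second case of Step~3, and the two Step~2 linear programs yielding $\frac{t(x+y-1)}{1-t(1-y)}$ and $1-\frac{t(y-x)}{1-t(1-y)}$. The only difference from the paper is an immaterial relabeling of the constituents; your identification of the binding constraints and the feasibility checks match the paper's computations.
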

\begin{proof}
	In order to compute the  lower and upper probability bounds $z'$ and $z''$ on the further event $C|A$ (i.e., the conclusion), 
	we  apply  Algorithm~\ref{Alg} in a symbolic way. 
	\paragraph{Computation of the lower probability bound  $z'$ on $C|A$}~\\
	\emph{Input.\,}  The assessment $(x,y,t)$ on $\F=(C|B,A|B, B|(A\vee B))$ and the event  $C|A$.\\
	\emph{Step 0.\,}  The constituents  associated with  $(C|B,A|B, B|(A\vee B),C|A)$ are
	$C_0=\widebar{A}\widebar{B},\;C_1=ABC,\, C_2=A\widebar{B}C,\, C_3=AB\widebar{C},\, C_4=A \widebar{B}  \widebar{C},\, C_5=\widebar{A}BC,\, C_6=\widebar{A} B \widebar{C}
	$.
	We observe	that  $\mathcal{H}_{0}= A \vee B$; then, the constituents contained in  $\mathcal{H}_{0}$ are $C_1,\ldots,C_6$.
	We construct the  starting
	system with the unknowns $\lambda_1,\ldots,\lambda_6,z$:
	\begin{equation}
	\label{S_0'F3}
	\begin{array}{lccccl}
	\left\{
	\begin{array}{lllllll}
	\lambda_1+\lambda_2=z(\lambda_1+\lambda_2+\lambda_3+\lambda_4), \\
	\lambda_1+\lambda_5=x(\lambda_1+\lambda_3+\lambda_5+\lambda_6),  \\
	\lambda_1+\lambda_3=y(\lambda_1+\lambda_3+\lambda_5+\lambda_6),  \\
	\lambda_1+\lambda_3+\lambda_5+\lambda_6=t(\sum_{i=1}^6\lambda_i), \\ 
	\sum_{i=1}^6\lambda_i=1, \;
	\lambda_i\geq 0,\; i=1,\ldots,6\,,
	\end{array}
	\right.
	&
	\Longleftrightarrow  &
	\left\{
	\begin{array}{lllllll}
	\lambda_1+\lambda_2=z(\lambda_1+\lambda_2+\lambda_3+\lambda_4),  \\
	\lambda_1+\lambda_5=xt,  \\
	\lambda_1+\lambda_3=yt,  \\
	\lambda_1+\lambda_3+\lambda_5+\lambda_6=t,\\
	\sum_{i=1}^6\lambda_i=1, \;
	\lambda_i\geq 0,\; i=1,\ldots,6\,.
	\end{array}
	\right.
	\end{array}
	\end{equation}
	
	\emph{Step 1.\,} By setting  $z=0$ in  System (\ref{S_0'F3}), we   obtain
	\begin{equation}\label{S_0'z=0F3}
	\begin{array}{lcl}
	\left\{
	\begin{array}{lllllll}
	\lambda_1+\lambda_2=0, \;\;
	\lambda_3=yt, \;\;	\lambda_5=xt, \\
	\lambda_3+\lambda_5+\lambda_6=t, \\
	\lambda_3+\lambda_4+\lambda_5+\lambda_6=1, \\
	\lambda_i\geq 0,\; i=1,\ldots,6\,.
	\end{array}
	\right.
	&
	\quad \Longleftrightarrow \quad &
	\left\{
	\begin{array}{lllllll}
	\lambda_1=\lambda_2=0, \\
	\lambda_3=yt, \;\;
	\lambda_4=1-t,\;\;
	\lambda_5=xt, \\
	\lambda_6=t(1-x-y), \\
	\lambda_i\geq 0,\; i=1,\ldots,6\,.
	\end{array}
	\right.
	\end{array}
	\end{equation}
As $(x,y,t) \in [0,1]^3$,
	the conditions $\lambda_h\geq 0$, $h=1,\ldots,5$, in 
	System (\ref{S_0'z=0F3}) are all satisfied. Then, System (\ref{S_0'z=0F3}), i.e.      System  (\ref{S_0'F3}) with $z=0$,  
	is solvable  if and only if 
	${\lambda_6=t(1-x-y)\geq 0}$.
	We distinguish two cases:  $(i)$  $t(1-x-y)<0$ (i.e. $t>0$ and $x+y>1$); $(ii)$ $t(1-x-y)\geq 0$, (i.e. $t=0$ or $(t>0) \wedge (x+y\leq 1)$).  In Case~$(i)$,  System (\ref{S_0'z=0F3})  is  not solvable and we go to Step~2 of the algorithm. In Case~$(ii)$,  System (\ref{S_0'z=0F3}) is  solvable and we go to Step~3. 
	
	\emph{Case $(i)$.}  By Step 2 we have the following linear programming problem:\\
	\emph{Compute $\gamma'=\min(\sum_{i:C_i\subseteq AC}  \lambda_r)=\min(  \lambda_1+\lambda_2)$ subject to:}
	\begin{equation}\label{Sigma'F3}
	\left\{
	\begin{array}{lllllll}
	\lambda_1+\lambda_5=x(\lambda_1+\lambda_3+\lambda_5+\lambda_6),\;  
	\lambda_1+\lambda_3=y(\lambda_1+\lambda_3+\lambda_5+\lambda_6), \\
	\lambda_1+\lambda_3+\lambda_5+\lambda_6=t(\sum_{i=1}^6\lambda_i),\; 
	\lambda_1+\lambda_2+\lambda_3+\lambda_4=1, \;\;\\
	\lambda_i\geq 0,\; i=1,\ldots,6.
	\end{array}
	\right.
	\end{equation}
	We notice that $y$ is positive since $x+y>1$ (and $(x,y,t) \in [0,1]^3$). Then, also $1-t(1-y)$ is positive and  the constraints in (\ref{Sigma'F3}) can be rewritten as
\[
	\begin{array}{ll}
	\left\{
	\begin{array}{lllllll}
	\lambda_1+\lambda_5=xt(1+\lambda_5+\lambda_6), \\
	\lambda_1+\lambda_3=yt(1+\lambda_5+\lambda_6), \\
	\lambda_5+\lambda_6=(t-yt)(1+\lambda_5+\lambda_6),\\ 
	\lambda_1+\lambda_2+\lambda_3+\lambda_4=1, \;\;\\
	\lambda_i\geq 0,\; i=1,\ldots,6,\\
	\end{array}
	\right.
	\end{array}
	\Longleftrightarrow
	\begin{array}{ll}
	\left\{
	\begin{array}{lllllll}
	\lambda_5+\lambda_6=\frac{t(1-y)}{1-t(1-y)}, \;\; \\
	\lambda_1+\lambda_5=xt(1+\frac{t(1-y)}{1-t(1-y)})=\frac{xt}{1-t(1-y)}, \\
	\lambda_1+\lambda_3=yt(1+\frac{t(1-y)}{1-t(1-y)})=\frac{yt}{1-t(1-y)}, \\
	\lambda_1+\lambda_2+\lambda_3+\lambda_4=1, \;\;\\
	\lambda_i\geq 0,\; i=1,\ldots,6,\\
	\end{array}
	\right.
	\end{array}
	\]
	\begin{equation}\label{EQ:SOLF3}
	\Longleftrightarrow
	\begin{array}{ll}
	\left\{
	\begin{array}{lllllll}
	\max\{0,\frac{t(x+y-1)}{1-t(1-y)}\}\leq \lambda_1\leq \min\{x,y\}\frac{t}{1-t(1-y)},\\
	0\leq \lambda_2\leq \frac{1-t}{1-t(1-y)},\;\;\;	\lambda_3=\frac{yt}{1-t(1-y)}-\lambda_1, \;\;\;
	\lambda_4=\frac{1-t}{1-t(1-y)}-\lambda_2, \;\;\\
	\lambda_5=\frac{xt}{1-t(1-y)}-\lambda_1, \;\;\;
	\lambda_6=\frac{t(1-x-y)}{1-t(1-y)}+\lambda_1. \;\; \\
	\end{array}
	\right.
	\end{array}
	\end{equation}
	Thus, by recalling that $x+y-1>0$, the minimum $\gamma'$ of $\lambda_1 +\lambda_2$ subject to (\ref{Sigma'F3}), or equivalently subject to (\ref{EQ:SOLF3}), is obtained at $(\lambda_1',\lambda_2')=(\frac{t(x+y-1)}{1-t(1-y)},0)$.
	The \emph{procedure stops} yielding as \emph{output} 
	$z'=\gamma'=\lambda_1'+\lambda_2'=\frac{t(x+y-1)}{1-t(1-y)}$.\\
	
	\emph{Case $(ii)$.}   We take Step~3 of the algorithm. We denote by $\Lambda$  and $\mathcal{S}$ the vector of unknowns $(\lambda_1,\ldots,\lambda_6)$ and the set of solutions of System~(\ref{S_0'z=0F3}), respectively.
	We consider the following linear functions (associated with the conditioning events $H_1=H_2=B, H_3=A\vee B, H_4=A$) and their maxima in $\mathcal{S}$: 
	
	\begin{equation}\label{EQ:PHIF3}
	\begin{array}{l}
	\Phi_{1}(\Lambda)=\Phi_{2}(\Lambda)=\sum_{r:C_r\subseteq B}\lambda_r= \lambda_1+\lambda_3+\lambda_5+\lambda_6, \\
	\Phi_{3}(\Lambda)=\sum_{r:C_r\subseteq A \vee B}\lambda_r=\lambda_1+\lambda_2+\lambda_3+\lambda_4+\lambda_5+\lambda_6, \\ 
	\Phi_{4}(\Lambda)=\sum_{r:C_r\subseteq A}\lambda_r=\lambda_1+\lambda_2+\lambda_3+\lambda_4,\; 
	M_i=\max_{\Lambda \in \mathcal{S}} \Phi_i(\Lambda),\;  i=1,2,3,4\;.
	\end{array}
	\end{equation}
	By (\ref{S_0'z=0F3}) we obtain: $\Phi_{1}(\Lambda)=\Phi_{2}(\Lambda)=0+yt+xt+t-xt-yt=t$,  $\Phi_3(\Lambda)=1$, $\Phi_{4}(\Lambda)=yt+1-t=1-t(1-y)$, $\forall \Lambda \in \mathcal{S}$.
	Then,  $M_1=M_2=t$,  $M_3=1$, and $M_4=1-(1-y)t$.
	We consider two subcases: $t<1$; $t=1$. 
	If $t<1$, then $M_{4}=yt+1-t>yt\geq0$; so that $M_{4}>0$  and we are in the first case  of Step~3 (i.e., $M_{n+1}>0$). Thus,  the \emph{procedure stops} and yields $z'=0$ as \emph{output}.  
	If $t=1$, then
	$M_{1}=M_{2}=M_{3}=1>0$ and $M_{4}=y$. Hence, we are	in the first case  of Step~3 (when $y>0$) or in the second case of Step~3 (when $y=0$). Thus,  the \emph{procedure stops} and yields $z'=0$ as \emph{output}.  
	\paragraph{Computation of the upper  probability bound  $z''$ on $C|A$}
	\emph{Input} and \emph{Step 0} are  the same as  in the proof of $z'$. 
	\emph{Step 1.\,} By setting $z=1$ in  System (\ref{S_0'F3}), we   obtain
	\begin{equation*}
	\left\{
	\begin{array}{lllllll}
	\lambda_1+\lambda_2=\lambda_1+\lambda_2+\lambda_3+\lambda_4,\; 
	\lambda_1+\lambda_5=xt,\; 
	\lambda_1+\lambda_3=yt,\\
	\lambda_1+\lambda_3+\lambda_5+\lambda_6=t, \;
	\lambda_1+\lambda_2+\lambda_3+\lambda_4+\lambda_5+\lambda_6=1, \;
	\lambda_i\geq 0,\; i=1,\ldots,6\,,
	\end{array}
	\right.
	\end{equation*}
	or equivalently
	\begin{equation}\label{S_0'z=1F3}
	\begin{array}{lll}
	\left\{
	\begin{array}{lllllll}
	\lambda_3=\lambda_4=0,\;
	\lambda_1+\lambda_5=xt,\\
	\lambda_1=yt,\;
	\lambda_1+\lambda_5+\lambda_6=t,\\
	\lambda_1+\lambda_2+\lambda_5+\lambda_6=1, \\
	\lambda_i\geq 0,\; i=1,\ldots,6\,;
	\end{array}
	\right.
	
	&  \Longleftrightarrow  &
	\left\{
	\begin{array}{lllllll}
	\lambda_1=yt,\;
	\lambda_2=1-t, \;
	\lambda_3=\lambda_4=0,\\
	\lambda_5=(x-y)t,\;
	\lambda_6=t(1-x),\\
	\lambda_i\geq 0,\; i=1,\ldots,6\,.
	\end{array}
	\right.
	\end{array}
	\end{equation}
	As $(x,y,t) \in [0,1]^3$, the inequalities $\lambda_h\geq 0$, $h=1,2,3,4,6$ are satisfied.
	Then,
	System (\ref{S_0'z=1F3}), i.e. System  (\ref{S_0'F3}) with $z=1$,  is solvable  if and only if $\lambda_5=(x-y)t\geq 0$.
	We distinguish two cases:  $(i)$ $(x-y)t<0$, i.e. $x<y$ and  $t>0$; $(ii)$
	$(x-y)t\geq 0$, i.e.  $x\geq y$ or  $t=0$.  In Case~$(i)$,  System~(\ref{S_0'z=1F3})   is  not solvable and we go to Step~2 of the algorithm. In Case~$(ii)$,  System~(\ref{S_0'z=1F3})  is  solvable and we go to Step~3. \\
	\emph{Case $(i)$.} By Step 2 we have the following linear programming problem:\\
	\emph{Compute $\gamma''=\max(  \lambda_1+\lambda_2)$ subject to
		the constraints in (\ref{Sigma'F3})}. 
	As $(x,y,t) \in [0,1]^3$ and $x<y$, it follows that  $\min\{x,y\}=x$ and $y>0$. Then, in this case the quantity $1-t(1-y)$ is positive and the constraints in (\ref{Sigma'F3}) can be rewritten as in (\ref{EQ:SOLF3}). 
	Thus, the maximum $\gamma''$ of $\lambda_1 +\lambda_2$ subject to  (\ref{EQ:SOLF3}), is obtained at $(\lambda_1'',\lambda_2'')=(\frac{xt}{1-t(1-y)},\frac{1-t}{1-t(1-y)})$.
	The \emph{procedure stops} yielding as \emph{output} 
	$z''=\gamma''=\lambda_1''+\lambda_2''=\frac{xt}{1-t(1-y)}+\frac{1-t}{1-t(1-y)}=
	\frac{1-t+xt}{1-t+yt}=1-\frac{t(y-x)}{1-t+yt}$.\\
	
	\emph{Case} $(ii)$. We take Step 3 of the algorithm.
	We denote by $\Lambda$  and $\mathcal{S}$ the vector of unknowns $(\lambda_1,\ldots,\lambda_6)$ and the set of solutions of System~(\ref{S_0'z=1F3}), respectively. We consider the  functions  $\Phi_i(\Lambda)$ and the maxima $M_i$, $i=1,2,3,4$, given in (\ref{EQ:PHIF3}). From System~(\ref{S_0'z=1F3}), we observe that the functions $\Phi_1, \ldots, \Phi_4$ are constant for every  $\Lambda \in \mathcal{S}$, in particular it holds that
	$\Phi_1(\Lambda)=\Phi_2(\Lambda)=t$,  $\Phi_3(\Lambda)=1$ and $\Phi_4(\Lambda)=yt+1-t+0+0=1-t(1-y)$ for every $\Lambda \in \mathcal{S}$. So that $M_1=M_2=t$,  $M_3=1$, and $M_4=1-t(1-y)$. 
We consider two subcases: $t<1$; $t=1$.\\
	If $t<1$, then $M_{4}=yt+1-t>yt\geq0$; so that $M_{4}>0$  and we are in the first case  of Step~3 (i.e., $M_{n+1}>0$). Thus,  the \emph{procedure stops} and yields $z''=1$ as \emph{output}.  \\
	If $t=1$, then
	$M_{1}=M_{2}=M_{3}=1>0$ and $M_{4}=y$. Hence, we are	in the first case  of Step~3 (when $y>0$) or in the second case of Step~3 (when $y=0$). Thus,  the \emph{procedure stops} and yields $z''=1$ as \emph{output}. 	
\end{proof}
\begin{remark}\label{REM:FIG3POS}
	From Theorem \ref{THM:PROPF3},  we obtain  $z'>0$ if and only if $t(x+y-1)>0$. Moreover, we obtain  $z''<1$ if and only if $t(y-x)>0$. Moreover, it is easy to verify that 
	\[
	z'_{\{x,y,t\}}+	z''_{\{1-x,y,t\}}=1,
	\]
	where $z'_{\{x,y,t\}}$ and 	$z''_{\{1-x,y,t\}}$ are the lower bound and the upper bound of the two assessments $(x,y,z)$ and  $(1-x,y,z)$ on $((C|B),(A|B),B|(A\vee B))$, respectively.
\end{remark}
Based on Theorem~\ref{THM:PROPF3}, the next result presents the set of  coherent extensions of a given interval-valued probability assessment $\mathcal{I}=([x_1,x_2]\times [y_1,y_2]\times [t_1,t_2])\subseteq [0,1]^3$  on    $(C|B,A|B,B|(A\vee B))$ to the further conditional event
$C|A$.

\begin{theorem}\label{THM:PROPWTIVF3}
	Let $A,B,C$ be three logically independent events and $\mathcal{I}=([x_1,x_2]\times [y_1,y_2]\times [t_1,t_2])\subseteq [0,1]^3$  be an imprecise    assessment on  $(C|B,A|B,B|(A\vee B))$. Then, the set $\Sigma$ of the coherent extensions of $\mathcal{I}$ on $C|A$ is the interval  $[z^{*},z^{**}]$, where 
	\[
	\begin{array}{llll}
	z^* &=&
	\left\{
	\begin{array}{cl}
	0, &  \mbox{if } t_1(x_1+y_1-1)\leq 0,  \\~
	\displaystyle \frac{t_1(x_1+y_1-1)}{1-t_1(1-y_1)}\,
	, 
	&\mbox{if } t_1(x_1+y_1-1)>0, \;\;\ \text{ and }
	\end{array}
	\right.\vspace{0.05cm}
	\\  
	z^{**}&=&	\left\{
	\begin{array}{cl}
	\displaystyle
	1, &  \mbox{if }  t_1(y_1-x_2)\leq 0,\\	
	\displaystyle 1-\frac{t_1(y_1-x_2)}{1-t_1(1-y_1)}\,, &\mbox{if } t_1(y_1-x_2)>0.
	\end{array}
	\right.
	\end{array}
	\]	
\end{theorem}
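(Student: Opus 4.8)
The plan is to mirror the proof of the Figure~II analogue, Theorem~\ref{THM:PROPWTIV}: I would combine the precise propagation rule of Theorem~\ref{THM:PROPF3} with a coordinate-wise monotonicity analysis of its two bounds over the box $\I$. First I would invoke Proposition~\ref{PROP:FIGUREIIIBIS}: since the whole cube $[0,1]^3$ on $(C|B,A|B,B|(A\vee B))$ is t-coherent, the subset $\I$ is t-coherent as well. Consequently the set of coherent extensions decomposes as $\Sigma=\bigcup_{\P\in\I}[z'_{\P},z''_{\P}]=[z^{*},z^{**}]$ with $z^{*}=\inf_{\P\in\I}z'_{\P}$ and $z^{**}=\sup_{\P\in\I}z''_{\P}$, so the task reduces to optimizing the piecewise bounds $z'$ and $z''$ of Theorem~\ref{THM:PROPF3} over $[x_1,x_2]\times[y_1,y_2]\times[t_1,t_2]$.

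For the lower bound I would first dispose of the flat case. By Theorem~\ref{THM:PROPF3}, $z'_{\P}=0$ exactly when $t(x+y-1)\le 0$, i.e.\ when $t=0$ or $x+y\le 1$; such a point exists in $\I$ precisely when $t_1=0$ or $x_1+y_1\le 1$, and these two possibilities are encoded uniformly by the single clause $t_1(x_1+y_1-1)\le 0$, giving $z^{*}=0$ there. When instead $t_1(x_1+y_1-1)>0$ (so $t_1>0$ and $x_1+y_1>1$), every point of $\I$ satisfies $t(x+y-1)>0$, hence $z'_{\P}=g(x,y,t)=\tfrac{t(x+y-1)}{1-t(1-y)}$ throughout. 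Here I would note that the hypotheses force $y_1>0$, so $1-t(1-y)\ge y\ge y_1>0$ and the denominator is safely positive, and then check that $g$ is nondecreasing in each argument separately: the sign computations yield $\partial_x g=\tfrac{t}{1-t(1-y)}\ge 0$, $\partial_t g=\tfrac{x+y-1}{(1-t(1-y))^2}\ge 0$, and $\partial_y g=\tfrac{t(1-tx)}{(1-t(1-y))^2}\ge 0$. Thus the infimum of $g$ is attained at the corner $(x_1,y_1,t_1)$, yielding $z^{*}=\tfrac{t_1(x_1+y_1-1)}{1-t_1(1-y_1)}$.

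The upper bound is handled symmetrically through $z''_{\P}$. By Theorem~\ref{THM:PROPF3}, $z''_{\P}=1$ exactly when $t(y-x)\le 0$, i.e.\ when $t=0$ or $y\le x$; this occurs somewhere in $\I$ iff $t_1=0$ or $y_1\le x_2$, uniformly captured by $t_1(y_1-x_2)\le 0$, so $z^{**}=1$ there. Otherwise $t_1(y_1-x_2)>0$ forces $y>x$ and $t>0$ on all of $\I$, whence $z''_{\P}=1-h(x,y,t)$ with $h(x,y,t)=\tfrac{t(y-x)}{1-t(1-y)}$, and maximizing $z''$ means minimizing $h$. The analogous sign analysis gives $\partial_x h=\tfrac{-t}{1-t(1-y)}\le 0$, $\partial_y h=\tfrac{t(1-t(1-x))}{(1-t(1-y))^2}\ge 0$, and $\partial_t h=\tfrac{y-x}{(1-t(1-y))^2}\ge 0$, so $h$ is nonincreasing in $x$ and nondecreasing in $y$ and $t$; its minimum therefore sits at $(x_2,y_1,t_1)$, producing $z^{**}=1-\tfrac{t_1(y_1-x_2)}{1-t_1(1-y_1)}$, as claimed.

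I expect the only delicate points to be bookkeeping rather than conceptual. One must confirm that each boundary clause, $t_1(x_1+y_1-1)\le 0$ and $t_1(y_1-x_2)\le 0$, correctly collapses the degenerate $t=0$ subcase and the $x+y\le 1$ (resp.\ $y\le x$) subcase into a single condition; and one must guarantee that $1-t(1-y)$ never vanishes, so that the monotonicity arguments are valid. Both are immediate because, in each non-flat regime, the standing hypotheses force $y_1>0$ and hence $1-t(1-y)\ge y_1>0$ uniformly on the box.
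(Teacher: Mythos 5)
Your proposal is correct and follows essentially the same route as the paper's proof: total coherence of $\I$ via Proposition~\ref{PROP:FIGUREIIIBIS}, the decomposition $\Sigma=\bigcup_{\P\in\I}[z'_{\P},z''_{\P}]$, the same case splits on the signs of $t_1(x_1+y_1-1)$ and $t_1(y_1-x_2)$, and corner optimization of the bounds from Theorem~\ref{THM:PROPF3} by coordinate-wise monotonicity. The only difference is that you verify the monotonicity claims by explicit partial derivatives (all of which check out), where the paper simply asserts them.
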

\begin{proof}
	Since the set $[0,1]^3$ on $(C|B,A|B,B|(A\vee B))$ is totally coherent (Proposition~\ref{PROP:FIGUREIIIBIS}), it follows that 
	$\mathcal{I}$ is also totally coherent. 
	For every precise assessment $\P=(x,y,t)\in \I$, we denote by $[z'_{\P},z''_{\P}]$ the interval of the coherent extension of $\P$ on $C|A$, where $z'_{\P}$ and $z''_{\P}$ coincide with   $z'$ and $z''$, respectively, as	defined in Theorem \ref{THM:PROPF3}. 
	Then,  $\Sigma=\bigcup_{\P\in \I}[z_{\P}',z_{\P}'']=[z^*,z^{**}]$, 
	where $z^*=\inf_{\P\in \I}z_{\P}'$
	and $z^{**}=\sup_{\P\in \I}z_{\P}''$.
	\\
	Concerning the computation of $z^*$ 
	we distinguish the following  alternative cases: 
	$(i)$ $t_1(x_1+y_1-1)\leq 0$; $(ii)$ $t_1(x_1+y_1>  1)>0$. 
	Case $(i)$. By Theorem~\ref{THM:PROPF3}  it holds that $z'_{\P}=0$ for $\P=(x_1,y_1,t_1)$.
	Thus, $\{z'_{\P}:\P\in\I\}\supseteq\{0\}$ and hence $z^*=0$.\\	 
	Case $(ii)$. We note that
	the function  $t(x+y-1):[0,1]^3$ is  nondecreasing in the arguments  $x,y,t$. 
	Then, 
	$t(x+y-1)\geq t_1(x_1+y_1-1)>0$ for every $(x,y,t)\in \I$. Hence by Theorem~\ref{THM:PROPF3},
	$z'_{\P}=\frac{t(x+y-1)}{1-t(1-y)}$ for every $\P\in \mathcal{I}$. 
	Moreover, the function $\frac{t(x+y-1)}{1-t(1-y)}$  is nondecreasing in the arguments  $x,y,t$ over the restricted  domain $\I$; then, $\frac{t(x+y-1)}{1-t(1-y)}\geq \frac{t_1(x_1+y_1-1)}{1-t_1(1-y_1)}$. Thus,
	$
	z^*=\inf\{z_{\P}':\P\in\I\}=\inf\Big\{\frac{t(x+y-1)}{1-t(1-y)}:(x,y,z)\in\I\Big\}=\frac{t_1(x_1+y_1-1)}{1-t_1(1-y_1)}$. \\
	Concerning the computation of $z^{**}$ 
	we distinguish the following  alternative cases: 
	$(i)$ $t_1(y_1- x_2)\leq 0$; $(ii)$ $t_1(y_1- x_2)> 0$. 
	Case $(i)$. 
	By Theorem~\ref{THM:PROPF3}  it holds that $z''_{\P}=1$ for $\P=(x_2,y_1,t_1)\in \I$.
	Thus, 
	$\{z''_{\P}:\P\in\I\}\supseteq \{1\}$ and hence $z^{**}=1$.\\	 		
	Case $(ii)$. We observe that $t(y-x)\geq t_1(y-x)\geq t_1(y_1-x)\geq t_1(y_1-x_2)>0$ for every $(x,y,t)\in\I$. Then, the condition $t(y-x)>0$ is satisfied   for every $\P=(x,y,t)\in \mathcal{I}$ and hence by Theorem~\ref{THM:PROPF3},
	$z''_{\P}=1-\frac{t(y-x)}{1-t(1-y)}$ for every $\P\in \mathcal{I}$. 
	The function $1-\frac{t(y-x)}{1-t(1-y)}$  is nondecreasing in the argument  $x$ and it is nonincreasing in the arguments $y,t$ over the restricted  domain $\I$. 
	Thus, $1-\frac{t(y-x)}{1-t(1-y)}\leq 1-\frac{t(y-x_2)}{1-t(1-y)}\leq 1-\frac{t_1(y_1-x_2)}{1-t_1(1-y_1)}$ for every $(x,y,t)\in\I$. Then
	$
	z^{**}=\sup\{z_{\P}'':\P\in\I\}=\sup\Big\{1-\frac{t(y-x)}{1-t(1-y)}:(x,y,z)\in\I\Big\}=1-\frac{t_1(y_1-x_2)}{1-t_1(1-y_1)}$.
\end{proof}
\subsection{Traditionally valid syllogisms of Figure III.}
\label{SEC:FIGIIISYLLO}
In this section we consider the probabilistic interpretation of the traditionally valid  syllogisms of Figure III (Darapti, Datisi, Disamis,   Felapton, Ferison, and  Bocardo; see Table 
\ref{TAB:AristSyl}).  Like in Figure I and in Figure II, all  syllogisms of Figure III without  existential import assumptions are probabilistically non-informative. Indeed, by instantiating $S$, $M$, $P$ for $A$, $B$, $C$, respectively, in Proposition \ref{PROP:FIGUREIII},  we observe that  the imprecise assessment $[0,1]^3$ on $(P|M,S|M,P|S)$ is t-coherent. Thus,  for instance, 
from the premises $p(P|M)=1$ and $p(S|M)>0$ infer that every $p(P|S)\in[0,1]$ is coherent. This means that  Datisi 
(``\emph{Every $M$ is $P$, Some $M$ is $S$, therefore Some $S$ is $P$}'')
without  existential import assumption is not valid.
Therefore we add the conditional event  existential import assumption:  $p(M|(S \vee M))>0$ (see Definition \ref{DEF:CEI}). In what follows, we
construct  (s-)valid versions of the traditionally valid syllogisms  of Figure III,  by  suitable instantiations in Theorem~\ref{THM:PROPF3}.
\paragraph{Darapti} 
By instantiating $S,M,P$ in Theorem~\ref{THM:PROPF3} for $A,B,C$ with $x=1$, any $y=1$, and any $t>0$,
as $t(x+y-1)=t>0$, 
it follows that  $z'= \frac{t(x+y-1)}{1-t(1-y)}=t>0$. Concerning the upper bound $z''$,  as $t(y-x)=0$, it holds that $z''=1$. 
Then,  the set $\Sigma$    of coherent extensions on $P|S$ of the  imprecise assessment $\{1\}\times\{1\}\times(0,1]$ on  $(P|M,S|M,M|(S\vee M))$ is $\Sigma=\bigcup_{\{(x,y,t)\in\{1\}\times \{1\} \times (0,1]\}} [t,1]=\bigcup_{\{t \in (0,1]\}} [t,1]=(0,1]$.
 Thus, by Definition \ref{DEF:VALID},
\begin{equation}\label{EQ:DARAPTIIP}
\{1\}\times\{1\}\times(0,1] \mbox{ on } (P|M,S|M,M|(S\vee M)) \;\models_s\; (0,1] \mbox{ on } P|S. \end{equation}
In terms of probabilistic constraints, (\ref{EQ:DARAPTIIP}) can be expressed by
\begin{equation}\label{EQ:DARAPTI} 
(p(P|M)=1, p(S|M)=1, p(M|(S \vee M))>0) \,\models_s \, p(P|S)>0 \,,
\end{equation}
which is a  s-valid version of Darapti.
\paragraph{Datisi} 
By instantiating $S,M,P$ in Theorem~\ref{THM:PROPF3} for $A,B,C$ with $x=1$, any $y>0$, and any $t>0$,
 as $t(x+y-1)=ty>0$, 
it follows that  $z'= \frac{t(x+y-1)}{1-t(1-y)}=\frac{ty}{1-t(1-y)}>0$. Concerning the upper bound $z''$,  as $t(y-x)=t(y-1)\leq0$, it holds that $z''=1$. 
Then,  the set $\Sigma$    of coherent extensions on $P|S$ of the  imprecise assessment $\{1\}\times(0,1]\times(0,1]$ on  $(P|M,S|M,M|(S\vee M))$ is $\Sigma=\bigcup_{\{(x,y,t)\in\{1\}\times (0,1]\times(0,1]\}} [\frac{ty}{1-t(1-y)},1]$.
We now prove that $\Sigma=(0,1]$. Of course, $\Sigma \subseteq [0,1]$.  
Moreover, as for  $(y,t)\in(0,1]\times(0,1]$ it holds that  $\frac{ty}{1-t(1-y)}>0$, then  $0\notin \Sigma$ and hence  $\Sigma \subseteq (0,1]$. \emph{Vice versa}, let $z\in(0,1]$. By choosing any pair $(y,t)\in (0,1]\times(0,1] $ such that 
 $0<t\leq z$ and $y=1$, we obtain 
\[
 \frac{ty}{1-t(1-y)} =t\leq z \leq 1,
\]
which implies that $z\in \Sigma$. Thus, by Definition \ref{DEF:VALID},
\begin{equation}\label{EQ:DATISIIP}
\{1\}\times(0,1]\times(0,1] \mbox{ on } (P|M,S|M,M|(S\vee M)) \;\models_s\; (0,1] \mbox{ on } P|S. \end{equation}
In terms of probabilistic constraints, (\ref{EQ:DATISIIP}) can be expressed by
\begin{equation}\label{EQ:DATISI} 
(p(P|M)=1, p(S|M)>0, p(M|(S \vee M))>0) \,\models_s \, p(P|S)>0 \,,
\end{equation}
which is a  s-valid version of Datisi.
Therefore, inference (\ref{EQ:DATISI}) is a probabilistically informative version of Datisi.

\paragraph{Disamis} We instantiate $S,M,P$ in Theorem~\ref{THM:PROPF3} for $A,B,C$ with any $x>0$,  $y=1$, and any $t>0$. We observe that  the imprecise assessment $I=(0,1]\times\{1\}\times(0,1]$ on  $(P|M,S|M,M|(S\vee M))$ coincides with $I'\cup I''$, where $I'=\{1\}\times\{1\}\times(0,1]$ and $I''=(0,1)\times\{1\}\times(0,1]$ (notice that here $(0,1)$ denotes the open unit interval). Then,  
the set $\Sigma$    of coherent extensions on $P|S$ of the  imprecise assessment $I$ 
on $P|S$ 
 coincides with  $\Sigma'\cup\Sigma''$, where 
$\Sigma'$ and $\Sigma''$  are the sets of coherent extensions  of the  two assessments $I'$  and $I''$, respectively.
In case of $I'$ (which implies $x=1$), it holds that  $\Sigma'=\bigcup_{\{(x,y,t)\in\{1\}\times \{1\} \times (0,1]\}} [t,1]=(0,1]$ (see Darapti).
In case of  $I''$ (which implies $x>0$), as $t(x+y-1)=tx>0$, 
it follows that  $z'= \frac{t(x+y-1)}{1-t(1-y)}=tx>0$; 
 concerning the upper bound, as $t(y-x)=t(1-x)>0$, it holds that $z''=1-\frac{t(y-x)}{1-t(1-y)}=1-t(1-x)$.  
Then,  $\Sigma''=\bigcup_{\{(x,y,t)\in\{(0,1)\times \{1\} \times (0,1]\}} [tx,1-t(1-x)]=(0,1)$.
Hence, $\Sigma=\Sigma'\cup\Sigma''=(0,1]$.
 Thus, by Definition \ref{DEF:VALID},
\begin{equation}\label{EQ:DISAMISIP}
(0,1]\times\{1\}\times(0,1] \mbox{ on } (P|M,S|M,M|(S\vee M)) \;\models_s\; (0,1] \mbox{ on } P|S. \end{equation}
In terms of probabilistic constraints, (\ref{EQ:DISAMISIP}) can be expressed by
\begin{equation}\label{EQ:DISAMIS} 
(p(P|M)>0, p(S|M)=1, p(M|(S \vee M))>0) \,\models_s \, p(P|S)>0 \,,
\end{equation}
which is a  s-valid version of Disamis (``\emph{Some $M$ is $P$,
	Every $M$ is  $S$, therefore Some $S$ is  $P$}''). Notice that Darapti also follows from Disamis and from Datisi by strengthening the premise.

\paragraph{Felapton}
By instantiating $S,M,P$ in Theorem~\ref{THM:PROPF3} for $A,B,C$ with $x=0$, any $y=1$, and any $t>0$,
as $t(x+y-1)=0$, 
it follows that  $z'=0$. Concerning the upper bound $z''$,  as $t(y-x)=t>0$, it holds that $z''=1-\frac{t(y-x)}{1-t(1-y)}=1-t$. 
Then,  the set $\Sigma$    of coherent extensions on $P|S$ of the  imprecise assessment $\{0\}\times\{1\}\times(0,1]$ on  $(P|M,S|M,M|(S\vee M))$ is $\Sigma=\bigcup_{\{(x,y,t)\in\{0\}\times\{1\}\times(0,1]\}} [0,1-t]$.
Equivalently,  the set $\no{\Sigma}$  of coherent extensions on $\no{P}|S$  is $\no{\Sigma}=\bigcup_{\{(x,y,t)\in\{0\}\times\{1\}\times(0,1]\}} [t,1]=\bigcup_{\{t\in (0,1]\}} [t,1]=(0,1]$.
Thus, by Definition \ref{DEF:VALID},
\begin{equation}\label{EQ:FELAPTONIP}
\{0\}\times\{1\}\times(0,1] \mbox{ on } (P|M,S|M,M|(S\vee M)) \;\models_s\; (0,1] \mbox{ on } \no{P}|S. \end{equation}
In terms of probabilistic constraints, (\ref{EQ:FELAPTONIP}) can be expressed by
\begin{equation}\label{EQ:FELAPTON} 
(p(P|M)=0, p(S|M)=1, p(M|(S \vee M))>0) \,\models_s \, p(\no{P}|S)>0 \,,
\end{equation}
which is a  s-valid version  of Felapton.  Notice that Felapton is equivalent to Darapti, because 
(\ref{EQ:FELAPTON}) 
is equivalent to 
(\ref{EQ:DARAPTI})
when  $P$ is replaced by  $\no{P}$ (and the  probabilities are adjusted accordingly).

\paragraph{Ferison}
By instantiating $S,M,P$ in Theorem~\ref{THM:PROPF3} for $A,B,C$ with $x=0$, any $y>0$, and any $t>0$,
as $t(x+y-1)=t(y-1)\leq 0$, 
it follows that  $z'=0$. Concerning the upper bound $z''$,  as $t(y-x)=ty>0$, it holds that $z''=1-\frac{t(y-x)}{1-t(1-y)}=1-\frac{ty}{1-t(1-y)}$. 
Then,  the set $\Sigma$    of coherent extensions on $P|S$ of the  imprecise assessment $\{0\}\times(0,1]\times(0,1]$ on  $(P|M,S|M,M|(S\vee M))$ is $\Sigma=\bigcup_{\{(x,y,t)\in \{0\}\times(0,1]\times(0,1]\}} [0,1-\frac{ty}{1-t(1-y)}]$.
Equivalently, as $p(\no{P}|S)=1-p(P|S)$, 
the set of  coherent extensions on $\no{P}|S$, denoted by $\no{\Sigma}$, of the  imprecise assessment $\{0\}\times(0,1]\times(0,1]$ on  $(P|M,S|M,M|(S\vee M))$
is $\no{\Sigma}=\bigcup_{\{(x,y,t)\in \{0\}\times(0,1]\times(0,1]\}} [\frac{ty}{1-t(1-y)},1]=(0,1]$.
Thus, by Definition~\ref{DEF:VALID},
\begin{equation}\label{EQ:FERISONIP}
\{0\}\times(0,1]\times(0,1] \mbox{ on } (P|M,S|M,M|(S\vee M)) \;\models_s\; (0,1] \mbox{ on } \no{P}|S. \end{equation}
In terms of probabilistic constraints, (\ref{EQ:FERISONIP}) can be expressed by
\begin{equation}\label{EQ:FERISON} 
(p(P|M)=0, p(S|M)>0, p(M|(S \vee M))>0) \,\models_s \, p(\no{P}|S)>0 \,,
\end{equation}
which is a  s-valid version of Ferison. 
Notice that 
 Ferison (\ref{EQ:FERISON})   is equivalent to
  Datisi (\ref{EQ:DATISI}),
when  $P$ is replaced by  $\no{P}$.

\paragraph{Bocardo}
 We instantiate $S,M,P$ in Theorem~\ref{THM:PROPF3} for $A,B,C$ with any $x<1$,  $y=1$, and any $t>0$. 
 We observe that  the imprecise assessment $I=[0,1)\times\{1\}\times(0,1]$ on  $(P|M,S|M,M|(S\vee M))$ coincides with $I'\cup I''$, where $I'=\{0\}\times\{1\}\times(0,1]$ and $I''=(0,1)\times\{1\}\times(0,1]$ (notice that here $(0,1)$ denotes the open unit interval). Then,  
 the set $\Sigma$    of coherent extensions on $P|S$ of the  imprecise assessment $I$ 
 on $P|S$ 
 coincides with  $\Sigma'\cup\Sigma''$, where 
 $\Sigma'$ and $\Sigma''$  are the sets of coherent extensions  of the  two assessments $I'$  and $I''$, respectively. In case of $I'$ (which implies $x=0$), it holds that
 $\Sigma'=\bigcup_{\{(x,y,t)\in\{0\}\times \{1\} \times (0,1]\}} [0,1-t]=[0,1)$ (see the set $\Sigma$ in Felapton).
In case of $I''$ (which implies $0<x<1$), it holds that  
 $\Sigma''=(0,1)$ (see the set  $\Sigma''$ in Disamis). 
 Hence, $\Sigma=\Sigma'\cup\Sigma''=[0,1)$.
 Thus, by Definition \ref{DEF:VALID},
\begin{equation}\label{EQ:BOCARDOIP}
[0,1)\times\{1\}\times(0,1] \mbox{ on } (P|M,S|M,M|(S\vee M)) \;\models_s\; [0,1) \mbox{ on } P|S. \end{equation}
In terms of probabilistic constraints, (\ref{EQ:BOCARDOIP}) can be expressed by
\begin{equation*}
(p(P|M)<1, p(S|M)=1, p(M|(S \vee M))>0) \,\models_s \, p(P|S)<1 \,,
\end{equation*}
which is equivalent to 
\begin{equation}\label{EQ:BOCARDO} 
(p(\no{P}|M)>0, p(S|M)=1, p(M|(S \vee M))>0) \,\models_s \, p(\no{P}|S)>0 \,.
\end{equation}
Formula (\ref{EQ:BOCARDO})
 is a  s-valid version of Bocardo  (\emph{``Some  $M$ is not $P$, Every $M$ is  $S$, therefore Some $S$ is not $P$''}).
 We observe that 
 Bocardo (\ref{EQ:BOCARDO})   is equivalent to
  Disamis (\ref{EQ:DISAMIS}),
when  $P$ is replaced by  $\no{P}$.

\begin{remark}
Notice that, traditionally, the conclusions of logically valid Aristotelian syllogisms of Figure III are neither  in the form of sentence type  A (\emph{every}) nor of E (\emph{no}).
	In terms of our probability semantics, 
	we study  which  assessments $(x,y,t)$ on $(P|M,S|M,S|(S\vee M))$  propagate to
	$z'=z''=p(P|S)=1$  in order to validate A in the conclusion. 
	According to Theorem~\ref{THM:PROPF3}, $(x,y,t)\in[0,1]^3$ propagates to $z'=z''=1$ if and only if 
	\[ \small
	\begin{cases}
	(x,y,t)\in[0,1]^3, \\
	t(x+y-1)>0,\\
	z'=\frac{t(x+y-1)}{1-t(1-y)}=1,\\
	t(y-x)\leq 0,
	\end{cases}
	\Longleftrightarrow
	\begin{cases}
	(x,y,t)\in[0,1]^3,\\
	1+yt-t>0,\\
	tx=1, 
	ty\leq 1,
	\end{cases}
	\Longleftrightarrow
	\begin{cases}
	x=1, \\
	0<y\leq 1,\\
	t=1.
	\end{cases}
	\]	 
	Then, $z'=z''=1$ if and only if $(x,y,t)=(1,y,1)$, with $0<y\leq 1$.
	However, for the syllogisms it would be too strong to require $t=1$ as an existential import assumption, we only require  that $t>0$. 	 
	Similarly,  in order to validate E in the conclusion, it can be shown that 
	assessments $(x,y,t)$ on $(P|M,S|M,S|(S\vee M))$  propagate to the conclusion
	$z'=z''=p(P|S)=0$ if and only if  $(x,y,t)=(0,y,1)$, with $0<y\leq 1$.
	Therefore, if $t$ is just positive  neither A nor E can be validate within in our probability semantics of Figure III.
\end{remark}

\section{Applications to nonmonotonic reasoning.}
\label{SEC:NM}
We recall that the \emph{default} ${H\normally E}$ denotes the sentence ``\emph{$E$ is a plausible consequence of  $H$}''  (see, e.g., \cite{kraus90}). Moreover, the  \emph{negated  default} $H\nnormally E$ denotes the sentence ``\emph{it is not the case, that: $E$ is a plausible consequence of  $H$}''. 
Based on Definition~8 in \cite{gilio16}, we interpret the default $H\normally E$ by  
the probability assessment 
$p(E|H)=1$;  while the negated default $H\nnormally E$ is interpreted by the imprecise probability assessment
$p(E|H)<1$. Thus,
as the  probability assessment $p(E|H)>0$ is equivalent to ${p(\widebar{E}|H)<1}$, the negated default $H\nnormally \widebar{E}$ is also interpreted by $p(E|H)>0$.
Then, the basic syllogistic sentence types (see Table \ref{Table:BS}) can be interpreted in terms of defaults or negated defaults as follows:
\begin{itemize}
\item[(A)] $S\normally P$ (\emph{Every $S$ is $P$}, $p(P|S)=1$);
\item[(E)] $S\normally \no{P}$
(\emph{No $S$ is $P$}, $p(\no{P}|S)=1$);
\item[(I)] $S\nnormally \no{P}$
(\emph{Some $S$ is $P$}, $p(P|S)>0$);
\item[(O)] $S\nnormally P$
(\emph{Some $S$ is  not $P$}, $p(\no{P}|S)>0$).
\end{itemize}
For example, recall the
probabilistic modus Barbara
(\ref{EQ:BARBARA}), which is strictly valid and thus valid, can be expressed in terms of defaults and negated defaults as follows:
$(M\normally P, S\normally M, 
(S\vee M) \nnormally \no{S})\,\models\, S\normally P$. As pointed out in \cite{gilio16}  the default version of Barbara amounts to  the well-known  inference rule Weak Transitivity.
We recall that Weak Transitivity is valid in the nonmonotonic  System R (\cite{Lehmann92}, i.e., System P (\cite{kraus90}) plus Rational Monotonicity), because it  is equivalent to Rational Monotonicity  
(\cite[Theorem~2.1]{freund1991}). For other nonmonotonic versions of transitivity  see \cite{bezzazi96,bezzazi97}.
We present the   default versions of the (logically valid) syllogisms   of Figures I, II, and III  in Table~\ref{TAB:AristSylDef}.
These versions,  which involve  defaults and negated defaults, are valid in our approach and
can serve as  inference rules for nonmonotonic reasoning.

Moreover, we observe that some syllogisms can be expressed in defaults only without using negated defaults. For example, if the conditional event existential import of Barbara is strengthened by $p(S|(S\vee M))=1$, we obtain the following  valid default rule:
\begin{equation}\label{EQ:BARBARADEFAULTS}
 (M\normally P, S\normally M, 
(S\vee M) \normally S)\,\models_s\, S\normally P.
\end{equation}
Note that inference (\ref{EQ:BARBARADEFAULTS}) still satisfies  AAA of Figure I.  In probabilistic terms inference (\ref{EQ:BARBARADEFAULTS})
 means that the premises   p-entails the conclusion  (see Section 10.2 in \cite{GiSa19}),
 i.e.,
  \begin{equation}\label{EQ:BARBARAP}
 (p(P|M)=1, p(M|S)=1, 
 p(S|(S\vee M) )=1)\,\models_s\, p(P|S)=1.
 \end{equation}
 In general, the probability propagation rules for the three figures can be used to generate new syllogisms. For example, from the probability propagation rule of Figure III  (Theorem \ref{THM:PROPF3}) we obtain 
 a valid syllogism of type AAA with  the (stronger) existential import $p(M|(S\vee M))=1$, which is in terms of defaults:
\begin{equation}\label{EQ:AAAFigIII}
(M\normally P,
M\normally S, 
(S\vee M) \normally M)\,\models_s\, S\normally P.	\end{equation}
Equations (\ref{EQ:BARBARADEFAULTS}) and (\ref{EQ:AAAFigIII}) 
 are p-valid inference rules for nonmonotonic reasoning and constitute  syllogisms which are beyond traditional Aristotelian syllogisms (since, traditionally, AAA does not describe a valid syllogism of Figure III).

 The procedure  of replacing negated defaults by defaults, for obtaining inference (\ref{EQ:BARBARADEFAULTS}),  can also yield new syllogisms. 
\begin{table}[!h]
	\begin{minipage}{\textwidth}\centering 
		\begin{tabular}{lll}      \hline
			\multicolumn{3}{c}{Figure I }\\
			AAA & Barbara & $(M\normally P, S\normally M, 
			(S\vee M) \nnormally \no{S})\,\models_s\, S\normally P.$\\
			AAI & Barbari &
			$(M\normally P, S\normally M, 
			(S\vee M) \nnormally \no{S})\,\models\, S\nnormally \no{P}.$\\
			AII & Darii &
			$(M\normally P, S\nnormally \no{M}, 
			(S\vee M) \nnormally \no{S})\,\models_s\, S\nnormally \no{P}.$\\
				EAE & Celarent &$(M\normally \no{P}, S\normally M, 
				(S\vee M) \nnormally \no{S})\,\models_s\, S\normally\no{P}.$\\
			EAO & Celaront &						$(M\normally \no{P}, S\normally M, 
			(S\vee M) \nnormally \no{S})\,\models\, S\nnormally P.$\\
			EIO & Ferio &$(M\normally \no{P}, S\nnormally \no{M}, 
			(S\vee M) \nnormally \no{S})\,\models_s\, S\nnormally P.$\\
			\hline			
			\multicolumn{3}{c}{Figure II }\\
				AEE & Camestres &
			$(P\normally M,
			 S\normally \no{M}, 
			(S\vee P) \nnormally \no{S})\,\models_s\, S\normally \no{P}.$\\
			AEO & Camestrop \, &	$(P\normally M,
			S\normally \no{M}, 
			(S\vee P) \nnormally \no{S})\,\models\, S\nnormally P.$\\
			AOO & Baroco \, &	$(P\normally M,
			S\nnormally M, 
			(S\vee P) \nnormally \no{S})\,\models_s\, S\nnormally P.$\\
			EAE & Cesare &	$(P\normally \no{M},
			S\normally M, 
			(S\vee P) \nnormally \no{S})\,\models_s\, S\normally \no{P}.$\\
			EAO & Cesaro &
			$(P\normally \no{M},
			S\normally M, 
			(S\vee P) \nnormally \no{S})\,\models\, S\nnormally P.$\\
			EIO & Festino &
			$(P\normally \no{M},
S\nnormally \no{M}, 
(S\vee P) \nnormally \no{S})\,\models_s\, S\nnormally P.$\\
			\hline
			
			\multicolumn{3}{c}{Figure III}\\
			AAI& Darapti &
$(M\normally P,
M\normally S, 
(S\vee M) \nnormally \no{M})\,\models_s\, S\nnormally \no{P}.$\\						
			AII & Datisi &			$(M\normally P,
			M\nnormally \no{S}, 
			(S\vee M) \nnormally \no{M})\,\models_s\, S\nnormally \no{P}.$\\
	IAI & Disamis &$(M\nnormally \no{P},
	M\normally S, 
	(S\vee M) \nnormally \no{M})\,\models_s\, S\nnormally \no{P}.$\\
			EAO & Felapton &
$(M\normally \no{P},
M\normally S, 
(S\vee M) \nnormally \no{M})\,\models_s\, S\nnormally P.$\\			
			EIO & Ferison &
$(M\normally \no{P},
M\nnormally \no{S}, 
(S\vee M) \nnormally \no{M})\,\models_s\, S\nnormally P.$\\			
			OAO\,\, & Bocardo &$(M\nnormally P,
			M\normally S, 
			(S\vee M) \nnormally \no{M})\,\models_s\, S\nnormally P.$\\ \hline
		\end{tabular}
	\end{minipage}
	\caption{
		Traditional (logically valid) Aristotelian syllogisms of Figure I, II, and III (see Table~\ref{TAB:AristSyl}) in terms of   defaults and negated defaults, under the conditional event existential import assumption.}
	\label{TAB:AristSylDef}
\end{table}

\section{Conversion and reduction.}
\label{SEC:CONV}
The most prominent methods of proof in Aristotelian syllogistics are \emph{conversion}, \emph{reductio} (by conversion), and \emph{reductio ad impossibile} (by the compound law of transposition; see, e.g., \cite{lukasiewicz57,sep-aristotle-logic}). In this section we give some probabilistic results  to show that \emph{conversion} and  \emph{reductio} (by conversion) do not hold in our approach (Section~\ref{SEC:RedByConv}). Then we show to what extent  \emph{reductio ad impossibile} can be applied within our approach: we will observe that by  the application of the compound law of transposition to syllogisms of Figure~II and Figure~III the syllogisms can be reduced  to Figure I and that they are hence \emph{valid}. However, this method does not allow for distinguishing between  \emph{valid} and \emph{s-valid} syllogisms and hence  \emph{reductio ad impossibile} is not \emph{s-validity} preserving (Section~\ref{SEC:RedByImp}).

\subsection{Reductio by conversion.}\label{SEC:RedByConv}

According to Aristotle,   three rules 
 of \emph{conversion} are sound (see, e.g., \cite{lukasiewicz57,sep-aristotle-logic}). Conversion means that the  term position can be interchanged in sentence types I and E (i.e., \emph{some $S$ is $P$}  logically implies   \emph{some $P$ is $S$} and \emph{no $S$ is $P$}  logically implies   \emph{no $P$ is $S$}, respectively) and that \emph{every $S$ is $P$} logically implies \emph{some $P$ is  $S$}. However, the assessment $p(P|S)$ does not constrain $p(S|P)$. Indeed,  as we now show
 in Proposition~\ref{PROP:ASYMMETRY}, 
 the assessment $(x,y)$ on $(P|S,S|P)$ is coherent for every $(x,y)\in[0,1]^2$. Therefore,  none of these three rules of conversion hold in our approach. 
\begin{proposition}
[Asymmetry of term order]
\label{PROP:ASYMMETRY}
   Let  $P,S$ be two logically independent events.  The imprecise assessment 
   $[0,1]^2$ on $\F=(P|S,S|P)$ is 
   t-coherent.
\end{proposition}
\begin{proof}
   Let  $P,S$ be two logically independent events.  We show that the imprecise assessment 
   $[0,1]^2$ on $\F=(P|S,S|P)$ is totally coherent, by showing that every precise assessment
   $\P=(x,y)\in[0,1]^2$ on $\F$ is coherent. Let $\P=(x,y)\in[0,1]^2$ be a precise assessment on $\F$. Then,
the constituents $C_h$ and  the  points $Q_h$ associated with   $(\F,\P)$ are
\[
C_1=SP, \;C_2=S\no{P}, \;C_3=\no{S}P,\; C_0=(x,y),
\]
and
\[
Q_1=(1,1), \; Q_2=(0,y), \;Q_3=(x,0),\; Q_0=(x,y)=\P.
\]
 We observe that $C_1\vee C_2\vee C_3=S\vee P$. By Theorem \ref{COER-P0}, coherence of  $\P$ on $\F$ requires that the following system
\[\begin{array}{l}
(\Systemsigma)  \hspace{1 cm}
\mathcal{P}=\sum_{h=1}^3 \lambda_hQ_h,\;
\sum_{h=1}^3 \lambda_h=1,\; \lambda_h\geq 0,\, h=1,\ldots,3,
\end{array}
\]
 or equivalently  
\[
\left\{\begin{array}{l}
\lambda_1+x\lambda_3=x,\;\;
\lambda_1+y\lambda_2=y,\\
\lambda_1+\lambda_2+\lambda_3=1,\;\;
\lambda_h\geq 0,\; h=1,2,3,
\end{array}
\right.
\]
is solvable. In geometrical terms, this  means that  
the condition $\P \in \mathfrak{I}$ is satisfied, where   $\mathfrak{I}$ is the convex hull of $Q_1, Q_2, Q_3$. 
We distinguish three cases: $(i)$ $x\neq 0$ and $y\neq 0$; $(ii)$ $x=0$; $(iii)$  $y=0$.\\
Case $(i)$.  We observe that 
$\P=\frac{xy}{x+y-xy}Q_1+
\frac{y(1-x)}{x+y-xy}Q_2+\frac{x(1-y)}{x+y-xy}Q_3$, indeed it holds that 
\[
\begin{array}{ll}
\frac{xy}{x+y-xy}(1,1)+
\frac{y(1-x)}{x+y-xy}(0,y)+\frac{x(1-y)}{x+y-xy}(x,0)=
\left(\frac{x^2+xy-x^2y}{x+y-xy},\frac{xy+y^2-xy^2}{x+y-xy}  \right)=\left(\frac{x(x+y-xy)}{x+y-xy},\frac{y(x+y-xy)}{x+y-xy}  \right)=(x,y).
\end{array}
\]
Thus,  system 	$(\Systemsigma)  $ is solvable and a solution is
$\Lambda=(\lambda_1,\lambda_2,\lambda_3)=(\frac{xy}{x+y-xy}, \frac{y(1-x)}{x+y-xy},\frac{x(1-y)}{x+y-xy})$. 
From  (\ref{EQ:PHI}) we obtain that
\[
\Phi_{1}(\Lambda)=\sum_{h:C_h\subseteq S}\lambda_h=\lambda_1+\lambda_2=\frac{y}{x+y-xy}>0,\;\;
\Phi_{2}(\Lambda)=\sum_{h:C_h\subseteq P}\lambda_h=
\lambda_1+\lambda_3=\frac{x}{x+y-xy}>0.
\]
Let $\mathcal{S}'=\{(\frac{xy}{x+y-xy},\frac{y(1-x)}{x+y-xy},\frac{x(1-y)}{x+y-xy})\}$ denote a subset of the set $\mathcal{S}$ of all solutions of $(\Systemsigma) $.  
Then, ${M_1'=\max\{\Phi_{1}:\Lambda\in\mathcal{S}'\}>0}$ and ${M_2'=\max\{\Phi_{2}:\Lambda\in\mathcal{S}'\}>0}$
and hence $I_0'=\emptyset $ (as defined in (\ref{EQ:I0'})).
 By Theorem \ref{COER-P0'},
 as $(\Systemsigma) $ is solvable and $I_0'=\emptyset $, the assessment $(x,y)\in\,]0,1]^2$ is coherent .\\
Case $(ii)$. In this case, as $x=0$, it holds that $\P=(0,y)=Q_2$. Thus,  system 	$(\Systemsigma)  $ is solvable and a solution is
$\Lambda=(\lambda_1,\lambda_2,\lambda_3)=(0,1,0)$. 
From  (\ref{EQ:PHI}) we obtain that
$
\Phi_{1}(\Lambda)=\sum_{h:C_h\subseteq S}\lambda_h=\lambda_1+\lambda_2=1$ and
$
\Phi_{2}(\Lambda)=\sum_{h:C_h\subseteq P}\lambda_h=
\lambda_1+\lambda_3=0$. Let $\mathcal{S}'=\{(0,1,0)\}$ denote a subset of the set $\mathcal{S}$ of all solutions of $(\Systemsigma) $.  
Then, $M_1'>0$ and $M_2'=0$
and hence $I_0'=\{2\}$ (as defined in (\ref{EQ:I0'})).
We recall that the sub-assessment $\P_0'=(y)$ on $\F_0'=\{S|P\}$ is  coherent for every $y\in[0,1]$. Then, 
 by  Theorem \ref{COER-P0'} the assessment  $(0,y)$ on $\F$ is coherent for every $y\in[0,1]$.
 Then, every assessment $(x,y)\in\{0\}\times[0,1]$ is coherent.
\\
Case $(iii)$. In this case, as $y=0$, it holds that $\P=(x,0)=Q_3$. Thus,  system 	$(\Systemsigma)  $ is solvable and a solution is
$\Lambda=(\lambda_1,\lambda_2,\lambda_3)=(0,0,1)$. 
From  (\ref{EQ:PHI}) we obtain that
$
\Phi_{1}(\Lambda)=\sum_{h:C_h\subseteq S}\lambda_h=\lambda_1+\lambda_2=0$ and 
$
\Phi_{2}(\Lambda)=\sum_{h:C_h\subseteq P}\lambda_h=
\lambda_1+\lambda_3=1$. Let $\mathcal{S}'=\{(0,0,1)\}$ denote a subset of the set $\mathcal{S}$ of all solutions of $(\Systemsigma) $.  
Then, $M_1'=0$ and $M_2'>0$
and hence $I_0'=\{1\}$ (as defined in (\ref{EQ:I0'})).
We recall that the sub-assessment $\P_0'=(x)$ on $\F_0'=\{P|S\}$ is  coherent for every $x\in[0,1]$. Then, 
 by  Theorem \ref{COER-P0'} the assessment  $(x,0)$ on $\F$ is coherent for every $x\in[0,1]$.
 Then, every assessment $(x,y)\in[0,1]\times\{0\}$ is coherent.
 \\
Therefore, every assessment $(x,y)\in[0,1]^2$ is coherent and hence the imprecise assessment $[0,1]^2$ on $\mathcal{F}$ is t-coherent.
\end{proof} 
 Moreover, Aristotle also proposed methods of {\em reduction} to prove validity. The method of reduction by conversion consists in ``reducing'' all  syllogisms to ``perfect'' syllogisms of Figure~1.  Only syllogisms of Figure~1 are perfect because the ``transitivity of the connexion between the terms [is] obvious at a glance'' (\cite[p. 73]{kneale84}): perfect syllogisms can be seen as self-evident without requiring further proof (for a discussion of ``perfect" see, e.g., \cite{ebert15}). More specifically, Aristotle's full program consists in showing validity by reduction to  \emph{Barbara} and \emph{Celarent}. Since this method of reduction requires conversion (\cite[p. 236]{kneale84}), reduction is also not valid in our approach. 

 In the next two remarks we observe that the conditional event existential import of Figure II  does not follow from any syllogism of Figure I (Remark~\ref{REM:FIGIINORED}) and \emph{vice versa} (Remark~\ref{REM:FIGIINOREReversed}). More specifically, assuming any degrees of belief in the premises of syllogisms of Figure~I (Figure~II, respectively) does not imply a positive degree of belief in the conditional event existential import of Figure~II, i.e., $p(S|(S\vee P)>0$ (Figure~I, i.e., $p(S|(S\vee M)>0$, respectively).


\begin{remark}\label{REM:FIGIINORED}
    Let $\P=(x,y,t,0)$, with $(x,y,t)\in[0,1]^3$, be a probability assessment on $\mathcal{F}=(P|M,M|S,S|(S\vee M),S|(S\vee P))$, where $S,M$, and $P$ are three logically independent events. We show that $\P=(x,y,t,0)$ on $\F$ is coherent for every $(x,y,t)\in[0,1]^3$.
	The constituents $C_h$ and  the  points $Q_h$ associated with   $(\F,\P)$ are
	given in Table \ref{TAB:TABLE_FIGUREIIRED}. 
	\begin{table}[!h]
		\begin{minipage}{\textwidth}
			\caption{Constituents $C_h$ and  points $Q_h$ associated with  the probability  assessment    $\mathcal{P}=(x,y,t,0)$  on 
				$\F=(P|M,M|S,S|(S\vee M),S|(S\vee P))$.
			}
			\label{TAB:TABLE_FIGUREIIRED}
		\centering 	\begin{tabular}{llll}\hline
		& $C_h$            & $Q_h$                          &  \\
		\hline
		$C_1$ \, & $SMP$         \,  & $(1,1,1,1)$ \,                   & $Q_1$   \\
		$C_2$ & $SM\no{P}$      & $(0,1,1,1)$                    & $Q_2$   \\
		$C_3$ & $S\no{M}P$       & $(x,0,1,1)$                    & $Q_3$   \\
		$C_4$ & $S\no{M}\,\no{P}$      & $(x,0,1,1)$                    & $Q_4$   \\
		$C_5$ & $\no{S}MP$ & $(1,y,0,0)$                    & $Q_5$   \\
		$C_6$ & $\no{S}M\no{P}$  & $(0,y,0,0)$                  & $Q_6$   \\  
		$C_7$ & $\no{S}\,\no{M}P$  & $(x,y,t,0)$                  & $Q_7$   \\
		$C_0$ & $\no{S}\,\no{M} \, \no{P}$   &$(x,y,t,0)$                       & $Q_0=\mathcal{P} $  \\\hline
	\end{tabular}
		\end{minipage}
	\end{table}
	By Theorem \ref{COER-P0}, coherence of  $\P=(x,y,t,0)$ on $\F$ requires that the following system is solvable
	\[\begin{array}{l}
	(\Systemsigma)  \hspace{1 cm}
	\mathcal{P}=\sum_{h=1}^7 \lambda_hQ_h,\;
	\sum_{h=1}^7 \lambda_h=1,\; \lambda_h\geq 0,\, h=1,\ldots,7,
	\end{array}
	\]
In geometrical terms, this  means that  
the condition $\P \in \mathfrak{I}$ is satisfied, where   $\mathfrak{I}$ is the convex hull of $Q_1, \ldots, Q_7$. We observe that 
$\P=Q_6$.  Thus,  system 	$(\Systemsigma)  $ is solvable and a solution is
$\Lambda=(\lambda_1,\ldots,\lambda_7)=(0,0,0,0,0,0,1)$. 
From  (\ref{EQ:PHI}) we obtain that
$\Phi_{1}(\Lambda)=\sum_{h:C_h\subseteq M}\lambda_h=\lambda_1+\lambda_2+\lambda_5+\lambda_6=0$,
$\Phi_{2}(\Lambda)=\sum_{h:C_h\subseteq S}\lambda_h=\lambda_1+\lambda_2+\lambda_3+\lambda_4=0$,
$
\Phi_{3}(\Lambda)=\sum_{h:C_h\subseteq S\vee M}\lambda_h=\lambda_1+\lambda_2+\lambda_3+\lambda_4+\lambda_5+\lambda_6=0$,
 $\Phi_{4}(\Lambda)=\sum_{h:C_h\subseteq S\vee P}\lambda_h=
\lambda_1+\lambda_2+\lambda_3+\lambda_4+\lambda_5+\lambda_7=1$. Let $\mathcal{S}'=\{(0,0,0,0,0,0,1)\}$ denote a subset of the set $\mathcal{S}$ of all solutions of $(\Systemsigma) $.   Then, $M_1'=M_2'=M_3'=0$, $M_4'=1$ and hence $I_0'=\{1,2,3\}$ (as defined in (\ref{EQ:I0'})).
 By Theorem \ref{COER-P0'},
 as $(\Systemsigma) $ is solvable and $I_0'=\{1,2,3\}$,
  it is sufficient to check the coherence of 
 the sub-assessment  $\P_0'=(x,y,t)$ on $\F_0'=
 (P|M,M|S,S|(S\vee M))$ in order to check the coherence of $(x,y,t,0)$ on $\F$. From   Proposition~\ref{PROP:FIGUREIBIS}, by replacing $A,B,C$ with $S,M,P$, respectively, it holds that  $(x,y,t)$ on $\F_0'=(P|M,M|S,S|(S\vee M))$ is coherent for every $(x,y,t)\in[0,1]^3$. Therefore, $(x,y,t,0)$ on $\F$ is coherent for every $(x,y,t)\in[0,1]^3$.
 \end{remark}
\begin{remark}\label{REM:FIGIINOREReversed}
    Let $\P=(x,y,t,0)$, with $(x,y,t)\in[0,1]^3$, be a probability assessment on $\mathcal{F}=(M|P,\no{M}|S,S|(P\vee S),S|(S\vee M))$, where $S,M$, and $P$ are three logically independent events. We show that $\P=(x,y,t,0)$ on $\F$ is coherent for every $(x,y,t)\in[0,1]^3$.
	The constituents $C_h$ and  the  points $Q_h$ associated with   $(\F,\P)$ are
	given in Table \ref{TAB:TABLE_FIGUREIIREDREV}. 
	\begin{table}[!h]
		\begin{minipage}{\textwidth}
			\caption{Constituents $C_h$ and  points $Q_h$ associated with  the probability  assessment    $\mathcal{P}=(x,y,t,0)$  on 
				$\F=(M|P,\no{M}|S,S|(S\vee P),S|(S\vee M))$.
			}
			\label{TAB:TABLE_FIGUREIIREDREV}
		\centering 	\begin{tabular}{llll}\hline
		& $C_h$            & $Q_h$                          &  \\
		\hline
		$C_1$ \, & $SMP$         \,  & $(1,0,1,1)$ \,                   & $Q_1$   \\
		$C_2$ & $SM\no{P}$      & $(x,0,1,1)$                    & $Q_2$   \\
		$C_3$ & $S\no{M}P$       & $(0,1,1,1)$                    & $Q_3$   \\
		$C_4$ & $S\no{M}\,\no{P}$      & $(x,1,1,1)$                    & $Q_4$   \\
		$C_5$ & $\no{S}MP$ & $(1,y,0,0)$                    & $Q_5$   \\
		$C_6$ & $\no{S}M\no{P}$  & $(x,y,t,0)$                  & $Q_6$   \\  
		$C_7$ & $\no{S}\,\no{M}P$  & $(0,y,0,0)$                  & $Q_7$   \\
		$C_0$ & $\no{S}\,\no{M} \, \no{P}$   &$(x,y,t,0)$                       & $Q_0=\mathcal{P} $  \\\hline
	\end{tabular}
		\end{minipage}
	\end{table}
	By Theorem \ref{COER-P0}, coherence of  $\P=(x,y,t,0)$ on $\F$ requires that the following system is solvable
	\[\begin{array}{l}
	(\Systemsigma)  \hspace{1 cm}
	\mathcal{P}=\sum_{h=1}^7 \lambda_hQ_h,\;
	\sum_{h=1}^7 \lambda_h=1,\; \lambda_h\geq 0,\, h=1,\ldots,7,
	\end{array}
	\]
In geometrical terms, this  means that  
the condition $\P \in \mathfrak{I}$ is satisfied, where   $\mathfrak{I}$ is the convex hull of $Q_1, \ldots, Q_7$. We observe that 
$\P=Q_6$.  Thus,  system 	$(\Systemsigma)  $ is solvable and a solution is
$\Lambda=(\lambda_1,\ldots,\lambda_7)=(0,0,0,0,0,1,0)$. 
From  (\ref{EQ:PHI}) we obtain that
$\Phi_{1}(\Lambda)=\sum_{h:C_h\subseteq P}\lambda_h=\lambda_1+\lambda_3+\lambda_5+\lambda_7=0$, 
 $\Phi_{2}(\Lambda)=\sum_{h:C_h\subseteq S}\lambda_h=\lambda_1+\lambda_2+\lambda_3+\lambda_4$,
 $\Phi_{3}(\Lambda)=\sum_{h:C_h\subseteq S\vee P}\lambda_h=
\lambda_1+\lambda_2+\lambda_3+\lambda_4+\lambda_5+\lambda_7=0$, and 
 $\Phi_{4}(\Lambda)=\sum_{h:C_h\subseteq S\vee M}\lambda_h=
\lambda_1+\lambda_2+\lambda_3+\lambda_4+\lambda_5+\lambda_6=1$. Let $\mathcal{S}'=\{(0,0,0,0,0,1,0)\}$ denote a subset of the set $\mathcal{S}$ of all solutions of $(\Systemsigma) $.   Then, $M_1'=M_2'=M_3'=0$, $M_4'=1$ and hence $I_0'=\{1,2,3\}$ (as defined in (\ref{EQ:I0'})).
 By Theorem \ref{COER-P0'},
 as $(\Systemsigma) $ is solvable and $I_0'=\{1,2,3\}$,
  it is sufficient to check the coherence of 
 the sub-assessment  $\P_0'=(x,y,t)$ on $\F_0'=(M|P,\no{M}|S,S|(P\vee S))$ in order to check the coherence of $(x,y,t,0)$ on $\F$. From   Proposition~\ref{PROP:FIGUREIIBIS}, by replacing $A,B,C$ with $S,M,P$, respectively, it holds that  $(x,y,t)$ on $\F_0'=(M|P,\no{M}|S,S|(P\vee S))$ is coherent for every $(x,y,t)\in[0,1]^3$. Therefore, $(x,y,t,0)$ on $\F$ is coherent for every $(x,y,t)\in[0,1]^3$.
 \end{remark}
We observe from Remark \ref{REM:FIGIINORED} that the conditional event existential import of Figure II does not follows from any syllogism of Figure I, since $p(P|M)=x$, $p(M|S)=y$, and $p(S|(S\vee M))=t>0$ does not imply $p(S|(S\vee P))>0$, because the assessment $p(S|(S\vee P))=0$ is  a coherent extension from $(x,y,t)$. Likewise,
we observe from Remark \ref{REM:FIGIINOREReversed} that  the conditional event existential import of Figure I does not follows from any syllogism of Figure II, since $p(M|P)=x$, $p(\no{M}|S)=y$, and $p(S|(S\vee P))=t>0$ does not imply $p(S|(S\vee M))>0$, because the assessment $p(S|(S\vee M))=0$ is a coherent extension from  $(x,y,t)$.

In Aristotelian syllogistics, for example,  \emph{Cesare} can be reduced by conversion to \emph{Celarent} as follows (see, e.g., \cite{sep-aristotle-logic}, table in Section 5.4): from the premises of \emph{Cesare}, i.e.,  \emph{no $P$ is $M$} and \emph{every $S$ is $M$} it follows by conversion that   \emph{no $M$ is $P$} and \emph{every $S$ is $M$}, which in turn implies by \emph{Celarent} that \emph{no $S$ is $P$}. In our approach however,  this inference does not hold: we observe that the premises  of \emph{Cesare}, i.e., $p(M|P)=0, p(M|S)=1, p(S|(S \vee P))>0$ (see (\ref{EQ:CESARE})),
do 
not imply the premises of \emph{Celarent}, i.e., $p(P|M)=0, p(M|S)=1, p(S|(S \vee M))>0$, 
(see (\ref{EQ:CELARENT})), since $p(S|S\vee M)=0$ is coherent  under the premises of \emph{Cesare} (Remark~\ref{REM:FIGIINOREReversed}). 
Therefore, \emph{Cesare} can not be reduced by conversion to \emph{Celarent} (which requires that $p(S|S\vee M)>0$) in our approach.

\subsection{Reductio ad impossibile.}\label{SEC:RedByImp}
Aristotle described also validity proofs by \emph{reductio ad impossibile}. According to {\L}ukasiewicz, this should correspond to the application of the \emph{compound law of transposition} (\cite[p. 56]{lukasiewicz57}), i.e., \emph{if ($A$ and $B$, then $C$), then (if $A$ and not-$C$, then not-$B$)}. For example, by instantiating \emph{Barbara} in the first conditional (i.e., for $A$ and $B$ \emph{Barbara}'s premises and for  $C$ its conclusion), implies logically \emph{Baroco} by suitable instantiations in the second conditional of the compound law of transposition (\cite[p. 56]{lukasiewicz57}).

In terms of defaults, it can be easily shown that \emph{reductio ad impossibile} holds in our approach because each valid syllogism of Figure II  and Figure III can be reduced to a valid syllogism of Figure I. 
For example, \emph{Camestres} of Figure~II  can be reduced to \emph{Darii} of Figure~I as follows. The compound law of transposition applied to \emph{Camestres} yields the inference: \emph{from $P\normally M$, $S\nnormally \no{P}$, and 
			$(S\vee P) \nnormally \no{S}$ infer $S\nnormally \no{M}$}. By interchanging  $P$ and $M$, we obtain the valid inference \emph{Darii}:  \emph{from $M\normally P$, $S\nnormally \no{M}$, and 
			$(S\vee M) \nnormally \no{S}$ infer $S\nnormally \no{P}$}. For a   sample reduction of a Figure III syllogism to Figure I consider \emph{Darapti}. The application of the compound law of transposition to \emph{Darapti} yields the inference \emph{from $S\normally \no{P}$, 
$M\normally S$, and 
$(S\vee M) \nnormally \no{M})$ infer $M\nnormally P$}. By interchanging $S$ and $M$, we obtain the valid inference \emph{Celaront} of Figure I:  \emph{from $M\normally \no{P}$, 
$S\normally M$, and 
$(S\vee M) \nnormally \no{S})$ infer $S\nnormally P$}. Note that while \emph{Darapti} is \emph{s-valid}, \emph{Celaront} is \emph{valid} but not \emph{s-valid} in our semantics. Since the compound law of transposition ignores this difference, it does not preserve  \emph{s-validity}.

 Our validity proofs are based on the probability propagation rules, which are different for each figure. To what extent they may be reduced to each other, given the asymmetries of the term order between the figures and the different existential import assumptions, is a topic of future research.

\section{Generalized quantifiers.}
\label{SEC:GQ}
The basic syllogistic sentence types A, E, I, O involve quantifiers which we represent by special cases of probability evaluations, namely equal to 1 or 0 for the universal quantifiers, and excluding 0 or 1 for the particular quantifiers.
A natural generalization of such quantifiers is to use thresholds between  0 and 1. Then, we obtain generalized (or intermediate) quantifiers (see, e.g., \cite{barwise81,peters06,peterson00}).  For instance,  the statement 
\emph{Most  $S$ are $P$} (sentence type T, for the notation see \cite{peterson00} see also \cite{Murinov2016IntermediateSI}) can be interpreted by the conditional probability assessment $p(P|S)\geq  x$, where $x$ denotes a suitable threshold (e.g., greater than 0.5). Likewise, the statement \emph{Most  $S$ are not-$P$} (sentence type D) can be interpreted by $p(\no{P}|S)\geq  x$ with a suitable threshold.
The choice of the threshold depends on the context of the speaker.
By using such sentences, we can construct and check the validity of syllogisms involving generalize quantifiers. Specifically, validity can be investigated by suitable instatiations of the probability propagation rules, we proved in the previous sections. Consider for instance the following generalization of Baroco: 
\begin{quote}
\begin{tabular}{ll}
ADD-Figure II:
  &\emph{All $P$ are $M$}. \\ 
  &\emph{Most  $S$ are not-$M$}. \\
  & Therefore, \emph{Most  $S$ are not-$P$}.\\
  \end{tabular}
\end{quote}
In this syllogism, the first premise is of the sentence type A but the second premise and the conclusion consist of   sentence type D. In our semantics this syllogism is interpreted as follows: from the premises $p(M|P)=1$ and  $ p(\widebar{M}|S)  \geq y$  and  the conditional event existential import assumption $p(S|(S \vee P))>0$ infer  the conclusion $p(\no{P}|S)\geq y$, where $y>0.5$. 
To prove  the validity of this syllogism instantiate  $S,M,P$ in Theorem~\ref{THM:PROPWTIV} 
for $A,B,C$ with $x_1=x_2=1$, $y_1>0$,  $y_2=1$,  $t_1>0$, and $t_2=1$.  Then,  we obtain  $z^*=y_1$ and $z^{**}=1$.
 Therefore,  the set $\Sigma$    of coherent extensions on $\no{P}|S$ of the  imprecise assessment $\{1\}\times[y_1,1]\times(0,1]$ on  $(M|P,\no{M}|S,S|(S\vee P))$ is $\Sigma=[y_1,1]$.
Then, we obtain the following generalization of  Equation (\ref{EQ:BAROCO}):
\begin{equation}\label{EQ:BAROCOGQ} 
(p(M|P)=1,   p(\widebar{M}|S)  \geq y_1,  p(S|(S \vee P))>0) \,\models_s \, p(\no{P}|S)\geq y_1 \,.
\end{equation}
By choosing  $y_1>0.5$, Equation (\ref{EQ:BAROCOGQ})   validates ADD-Figure II.

Likewise, we can obtain an extension of Darii (\ref{EQ:DARII}) involving generalized quantifiers. Indeed, by instantiating $S,M,P$ in Theorem~\ref{THM:PROPWTIVFIGI} for $A,B,C$ with $x_1=x_2=1$, $y_1>0$,  $y_2=1$,  $t_1>0$, and $t_2=1$.  Then,  we obtain  
$z^*=\max\left\{0,x_1y_1-\frac{(1-t_1)(1-x_1)}{t_1}\right\}=y_1$ and $z^{**}=1$.
 Therefore,  the set $\Sigma$    of coherent extensions on $P|S$ of the  imprecise assessment $\{1\}\times[y_1,1]\times(0,1]$ on  $(P|M,M|S,S|(S\vee M))$ is $\Sigma=[y_1,1]$.
By Definition \ref{DEF:VALID},
\begin{equation}\label{EQ:DARIIGQ} 
	(p(P|M)=1,   p(M|S)  \geq y_1,  p(S|(S \vee M))>0) \,\models_s \, p(P|S)\geq y_1 \,.
\end{equation}
Equation (\ref{EQ:DARIIGQ}) generalizes (\ref{EQ:DARII}) and validates  the following generalized syllogism, when $y_1>0.5$
\begin{quote}

\begin{tabular}{ll}
ATT-Figure I:    &	\emph{All $M$ are $P$}. \\ 
    &\emph{Most  $S$ are $M$}. \\ 
    &Therefore, \emph{Most  $S$ are $P$}.
\end{tabular}

\end{quote}
While, as pointed in Remark \ref{REM:VALID}, 
valid inferences can generally be obtained from strengthening the premises or weakening the conclusion of valid inferences, it is also possible to check the validity of syllogisms with weaker premises by exploiting the respective probability propagation rules. For an example of generalized syllogism of Figure III consider the following which is a version of Darapti with weakened premises:
\begin{quote}
\begin{tabular}{ll}
TTI-Figure III: &\emph{Most  $M$ are $P$}. \\
&\emph{Most  $M$ are $S$}. \\ 
&Therefore, \emph{some  $S$ is $P$}.\\
\end{tabular}
\end{quote}
Indeed, by instantiating $S,M,P$ in Theorem~\ref{THM:PROPWTIVF3} for $A,B,C$ with $x_1=y_1>0.5$, $x_2=y_2=1$,  $t_1=t>0$, and $t_2=1$.  Then, as $x_1+y_1-1>0$, we obtain  $
z^*=  \frac{t(2x_1-1)}{1-t(1-x_1)}$ and, as $t_1(y_1-x_2)\leq 0$, we obtain $z^{**}=1$. We observe that $z^*>t(2x_1-1)>0$ because $\frac{t}{1-t(1-x_1)}>t$ and $x_1>0.5$.
Therefore,  the set $\Sigma$    of coherent extensions on $P|S$ of the  imprecise assessment $[x_1,1]\times[x_1,1]\times(0,1]$ on  $(P|M,M|S,S|(S\vee M))$, with $x_1>0.5$, is $\Sigma=\bigcup_{t\in (0,1]} [\frac{t(2x_1-1)}{1-t(1-x_1)},1]=(0,1]$.
Then,
\begin{equation}\label{EQ:DARAPTIWEAK} 
	(p(P|M)\geq x_1,   p(S|M)  \geq x_1,  p(M|(S \vee M))>0) \,\models_s \, p(P|S)>0 \,,
\end{equation}
validates TTI-Figure III, which is a generalization of Darapti involving generalized quantifiers where the premises are weakened. 

By applying the probability propagation rules (for precise or interval-valued probability assessments) of Figures I, II, and III 
further syllogisms with generalized quantifiers can be obtained. 
\section{Concluding remarks.}
\label{SEC:CONCL}
In this paper we presented a probabilistic interpretation of the basic syllogistic sentence types (A, E, I, O) and suitable existential import assumptions in terms of 
probabilistic constraints.
By exploiting 
 coherence, we introduced 
the notion of validity and strict validity for probabilistic inferences involving  imprecise probability assessments.

For each Figure I, II, and III, we verified the  coherence of any probability assessment in $[0,1]^3$ on the three conditional events which are involved in the major and the minor premise and the conclusion.
These results
show  that, without existential import assumption,  all traditionally valid syllogisms are probabilistically non-informative.
We also  verified  for all three figures the total  coherence of the imprecise assessment $[0,1]^3$ on the conditional events in the  premise set including the  existential import.
Then, 
we derived 
 the interval of  all coherent extensions on  the conclusion
 for every coherent  (precise or interval-valued) probability  assessment  on the premise set
 for each  of the three figures. These results were then exploited to prove the validity or strict validity of our probabilistic interpretation of all traditionally valid syllogisms of the three figures: Barbara, Barbari, Darii, Celarent, Celaront, and Ferio  of Figure I;
  Camestres, Camestrop, Baroco, Cesare, Cesaro, and Festino of Figure II;
  Datisi, Darapti, Ferison, Felapton, Disamis, and  Bocardo of Figure III.
  As  mentioned before the coherence approach is more general compared to the standard approaches where the conditional probability  $p(E|H)$ is defined by $p(E\wedge H)/p(H)$,  where $p(H)$ must be positive. Indeed, we showed that the conditional event existential import assumption (which is sufficient for validity) is weaker than the requirement of positive conditioning events for the conditional events involved in the syllogisms.
  
We then built
a   bridge from our probability semantics of the Aristotelian syllogisms to nonmonotonic reasoning by interpreting the basic syllogistic sentence types by suitable defaults and negated defaults. We also showed how some new valid syllogisms can be obtained  by strengthening our existential import assumption.
 Moreover,  by this procedure, the traditionally not valid AAA of Figure III can be validated.
 These new syllogisms, which are  expressed in terms of defaults only, are  p-valid inference rules which we propose, together with  default versions of the  traditional ones  for future research in nonmotononic reasoning.
Then we investigated Aristotelian methods of proof within our framwork. We observed that \emph{reductio} by conversion does not work while  \emph{reductio ad impossibile} can be applied in our approach. However, the method of \emph{reductio ad impossibile} by suitable applications of the \emph{compound law of transposition} yields only validity by reducing syllogisms of Figure~II and Figure~III to Figure~I: it ignores  our distinction between \emph{valid} and \emph{s-valid} syllogisms.   
Finally, we  showed how the probability propagation rules can  be used to analyze the validity and the strict validity of syllogisms involving generalized quantifiers. Specifically, sentence like \emph{most $S$ are $P$} can be interpreted by imprecise probability assessments.

We presented a general method to  validate probabilistically non-informative inferences by adding additional premises. These additional premises can be  existential import assumptions, (negated) defaults or other probabilistic constraints. These   methods can be used to solve  inference problems   in general with applications in various disciplines. For instance, our probabilistic interpretation of Aristotelian syllogisms can serve as new rationality  framework for  the psychology of reasoning, which has a long tradition of using syllogistics for assessing the rationality of human inference. Moreover, our results on generalized quantifiers can be applied for investigating semantic and pragmatic problems involving quantification in linguistics. Furthermore, our  bridges to nonmonotonic reasoning show the applicability of the proposed approach in reasoning under uncertainty,  knowledge representation,   and artificial intelligence. 
This  selection of applications  points 
to new  bridges among our semantics, Aristotelian syllogistics, and various disciplines. 

 We will devote future work to apply our semantics to nonmonotonic reasoning and its relation to probability logic (see, e.g., \cite{Hawthorne96,hawthorne07}). Specifically, we will investigate the validity of our default versions of the syllogisms in the light of   different  systems of nonmonotonic reasoning.  
  
Future work will also be  devoted to the full probabilistic analysis of Figure IV.  Indeed,      categorical syllogisms of Figure IV go beyond the scope of this paper for two reasons. 
Firstly, they were  introduced after Aristotle's \emph{Analytica Priora} and are therefore not considered as (proper) Aristotelian syllogisms. 
Secondly, 
in contrast to the first three figures, based on preliminary results,  
there seems  not to exist a unique conditional event existential import assumption  for validating syllogisms of  Figure IV  (\cite{PfSaFIGIV}).  Therefore, 
 several  probability propagation rules should be developed only for this figure, which cannot be done in this paper owing to lack of space.

Finally, another strand of future research will focus on further generalizations of Aristotelian syllogisms by applying the theory of compounds of conditionals under coherence (see, e.g., \cite{GiSa14,GiSa19}). While, in the present paper, we connected the syllogistic terms $S$ and $P$ in the basic syllogistic sentence types by conditional events $P|S$,  this theory of compounds of conditionals allows for obtaining generalized syllogistic sentence types like  \emph{If   $S_1$ are $P_1$, then   $S_2$ are $P_2$} (i.e., $(P_2|S_2)|(P_1|S_1)$) by suitable nestings of conditional events. 
Interestingly, 
in the context of conditional syllogisms, 
 the resulting uncertainty propagation rules coincide with the respective non-nested versions   (see, e.g., \cite{ECSQARU17,SPOG18,SGOP20}). 
 Future research is needed to investigate whether similar results can be obtained in the context of such generalized  Aristotelian syllogisms.

The  various possibilities for applications and generalizations of Aristotelian syllogisms  call for future research and highlight   the impressive research impact of Aristotle's original work.\\

{\bf Acknowledgments.}
We thank two anonymous referees for useful comments. 
Niki Pfeifer was supported by the German Federal Ministry of Education and Research (BMBF project 01UL1906X: ``Logic and philosophy of science of reasoning under uncertainty''). Giuseppe Sanfilippo is 
also supported by the project FFR-2023 of the University of Palermo and is a member of the  INdAM-GNAMPA Research Group, Italy.


 

\begin{thebibliography}{91}
	\expandafter\ifx\csname natexlab\endcsname\relax\def\natexlab#1{#1}\fi
	\providecommand{\url}[1]{\texttt{#1}}
	\providecommand{\href}[2]{#2}
	\providecommand{\path}[1]{#1}
	\providecommand{\DOIprefix}{doi:}
	\providecommand{\ArXivprefix}{arXiv:}
	\providecommand{\URLprefix}{URL: }
	\providecommand{\Pubmedprefix}{pmid:}
	\providecommand{\doi}[1]{\href{http://dx.doi.org/#1}{\path{#1}}}
	\providecommand{\Pubmed}[1]{\href{pmid:#1}{\path{#1}}}
	\providecommand{\bibinfo}[2]{#2}
	\ifx\xfnm\relax \def\xfnm[#1]{\unskip,\space#1}\fi
	\bibitem[{Adams(1975)}]{adams75}
	\bibinfo{author}{Adams, E.W.}, \bibinfo{year}{1975}.
	\newblock \bibinfo{title}{The logic of conditionals}.
	\newblock \bibinfo{publisher}{Reidel}, \bibinfo{address}{Dordrecht}.
	\bibitem[{Amarger et~al.(1991a)Amarger, Dubois and Prade}]{AmDP91}
	\bibinfo{author}{Amarger, S.}, \bibinfo{author}{Dubois, D.},
	\bibinfo{author}{Prade, H.}, \bibinfo{year}{1991}a.
	\newblock \bibinfo{title}{{Constraint Propagation with Imprecise Conditional
			Probabilities}}, in: \bibinfo{booktitle}{Proc. of UAI-91},
	\bibinfo{publisher}{Morgan Kaufmann}. pp. \bibinfo{pages}{26--34}.
	\bibitem[{Amarger et~al.(1991b)Amarger, Dubois and Prade}]{Amarger91}
	\bibinfo{author}{Amarger, S.}, \bibinfo{author}{Dubois, D.},
	\bibinfo{author}{Prade, H.}, \bibinfo{year}{1991}b.
	\newblock \bibinfo{title}{Imprecise quantifiers and conditional probabilities},
	in: \bibinfo{editor}{Kruse, R.}, \bibinfo{editor}{Siegel, P.} (Eds.),
	\bibinfo{booktitle}{Symbolic and Quantitative Approaches to Uncertainty},
	\bibinfo{publisher}{Springer Berlin Heidelberg}, \bibinfo{address}{Berlin,
		Heidelberg}. pp. \bibinfo{pages}{33--37}.
	\bibitem[{Baioletti et~al.()Baioletti, Capotorti, Galli, Tognoloni, Rossi and
		Vantaggi}]{capotorti16}
	\bibinfo{author}{Baioletti, M.}, \bibinfo{author}{Capotorti, A.},
	\bibinfo{author}{Galli, L.}, \bibinfo{author}{Tognoloni, S.},
	\bibinfo{author}{Rossi, F.}, \bibinfo{author}{Vantaggi, B.}, .
	\newblock \bibinfo{title}{Ck{C} ({C}heck {C}oherence package; version e6,
		{N}ovember, 2016)}.
	\newblock \URLprefix \url{http://www.dipmat.unipg.it/~upkd/paid/software.html}.
	\bibitem[{Barwise and Cooper(1981)}]{barwise81}
	\bibinfo{author}{Barwise, J.}, \bibinfo{author}{Cooper, R.},
	\bibinfo{year}{1981}.
	\newblock \bibinfo{title}{Generalized quantifier and natural language}.
	\newblock \bibinfo{journal}{Linguistics and Philosophy} \bibinfo{volume}{4},
	\bibinfo{pages}{159--219}.
	\bibitem[{Berti et~al.(1998)Berti, Regazzini and Rigo}]{BeRR98}
	\bibinfo{author}{Berti, P.}, \bibinfo{author}{Regazzini, E.},
	\bibinfo{author}{Rigo, P.}, \bibinfo{year}{1998}.
	\newblock \bibinfo{title}{Well calibrated, coherent forecasting systems}.
	\newblock \bibinfo{journal}{Theory of Probability \& Its Applications}
	\bibinfo{volume}{42}, \bibinfo{pages}{82--102}.
	\newblock \DOIprefix\doi{https://doi.org/10.1137/S0040585X97975988}.
	\bibitem[{Bezzazi et~al.(1997)Bezzazi, Makinson and Pino~P{\'e}rez}]{bezzazi97}
	\bibinfo{author}{Bezzazi, H.}, \bibinfo{author}{Makinson, D.},
	\bibinfo{author}{Pino~P{\'e}rez, R.}, \bibinfo{year}{1997}.
	\newblock \bibinfo{title}{Beyond rational monotony: {S}ome strong non-horn
		rules for nonmonotonic inference relations}.
	\newblock \bibinfo{journal}{Journal of Logic and Computation}
	\bibinfo{volume}{7}, \bibinfo{pages}{605--631}.
	\bibitem[{Bezzazi and Perez(1996)}]{bezzazi96}
	\bibinfo{author}{Bezzazi, H.}, \bibinfo{author}{Perez, R.},
	\bibinfo{year}{1996}.
	\newblock \bibinfo{title}{Rational transitivity and its models}, in:
	\bibinfo{booktitle}{Proceedings of 26th IEEE International Symposium on
		Multiple-Valued Logic (ISMVL'96)}, pp. \bibinfo{pages}{160--165}.
	\newblock \DOIprefix\doi{10.1109/ISMVL.1996.508354}.
	\bibitem[{Biazzo and Gilio(2000)}]{biazzo00}
	\bibinfo{author}{Biazzo, V.}, \bibinfo{author}{Gilio, A.},
	\bibinfo{year}{2000}.
	\newblock \bibinfo{title}{A generalization of the fundamental theorem of de
		{F}inetti for imprecise conditional probability assessments}.
	\newblock \bibinfo{journal}{Int. J. Approximate Reasoning}
	\bibinfo{volume}{24}, \bibinfo{pages}{251--272}.
	\bibitem[{Biazzo et~al.(2002)Biazzo, Gilio, Lukasiewicz and
		Sanfilippo}]{biazzo02}
	\bibinfo{author}{Biazzo, V.}, \bibinfo{author}{Gilio, A.},
	\bibinfo{author}{Lukasiewicz, T.}, \bibinfo{author}{Sanfilippo, G.},
	\bibinfo{year}{2002}.
	\newblock \bibinfo{title}{Probabilistic logic under coherence, model-theoretic
		probabilistic logic, and default reasoning in {S}ystem {P}}.
	\newblock \bibinfo{journal}{Journal of Applied Non-Classical Logics}
	\bibinfo{volume}{12}, \bibinfo{pages}{189--213}.
	\bibitem[{Biazzo et~al.(2005)Biazzo, Gilio, Lukasiewicz and
		Sanfilippo}]{biazzo05}
	\bibinfo{author}{Biazzo, V.}, \bibinfo{author}{Gilio, A.},
	\bibinfo{author}{Lukasiewicz, T.}, \bibinfo{author}{Sanfilippo, G.},
	\bibinfo{year}{2005}.
	\newblock \bibinfo{title}{Probabilistic logic under coherence: {C}omplexity and
		algorithms}.
	\newblock \bibinfo{journal}{Ann. Math. Artif. Intell} \bibinfo{volume}{45},
	\bibinfo{pages}{35--81}.
	\bibitem[{Biazzo et~al.(2003)Biazzo, Gilio and Sanfilippo}]{BiGS03a}
	\bibinfo{author}{Biazzo, V.}, \bibinfo{author}{Gilio, A.},
	\bibinfo{author}{Sanfilippo, G.}, \bibinfo{year}{2003}.
	\newblock \bibinfo{title}{Coherence checking and propagation of lower
		probability bounds}.
	\newblock \bibinfo{journal}{Soft Computing} \bibinfo{volume}{7},
	\bibinfo{pages}{310--320}.
	\bibitem[{Biazzo et~al.(2012)Biazzo, Gilio and Sanfilippo}]{BiGS12}
	\bibinfo{author}{Biazzo, V.}, \bibinfo{author}{Gilio, A.},
	\bibinfo{author}{Sanfilippo, G.}, \bibinfo{year}{2012}.
	\newblock \bibinfo{title}{{Coherent Conditional Previsions and Proper Scoring
			Rules}}, in: \bibinfo{booktitle}{Advances in Computational Intelligence}.
	\bibinfo{publisher}{Springer}. volume \bibinfo{volume}{300} of
	\textit{\bibinfo{series}{CCIS}}, pp. \bibinfo{pages}{146--156}.
	\bibitem[{Boole(1854)}]{boole54b}
	\bibinfo{author}{Boole, G.}, \bibinfo{year}{1854}.
	\newblock \bibinfo{title}{An investigation of the laws of thought, on which are
		founded the mathematical theories of logic and probabilities}.
	\newblock \bibinfo{publisher}{Walton and Maberly}, \bibinfo{address}{London}.
	\bibitem[{Brier(1950)}]{Brie50}
	\bibinfo{author}{Brier, G.W.}, \bibinfo{year}{1950}.
	\newblock \bibinfo{title}{{Verification of Forecasts expressed in terms of
			probability}}.
	\newblock \bibinfo{journal}{Monthly Weather Review} \bibinfo{volume}{78},
	\bibinfo{pages}{1--3}.
	\bibitem[{Capotorti et~al.(2003)Capotorti, Galli and Vantaggi}]{CaGV03}
	\bibinfo{author}{Capotorti, A.}, \bibinfo{author}{Galli, L.},
	\bibinfo{author}{Vantaggi, B.}, \bibinfo{year}{2003}.
	\newblock \bibinfo{title}{Locally strong coherence and inference with
		lower-upper probabilities}.
	\newblock \bibinfo{journal}{Soft Computing} \bibinfo{volume}{7},
	\bibinfo{pages}{280--287}.
	\bibitem[{Capotorti et~al.(2007)Capotorti, Lad and Sanfilippo}]{CaLS07}
	\bibinfo{author}{Capotorti, A.}, \bibinfo{author}{Lad, F.},
	\bibinfo{author}{Sanfilippo, G.}, \bibinfo{year}{2007}.
	\newblock \bibinfo{title}{Reassessing accuracy rates of median decisions}.
	\newblock \bibinfo{journal}{The American Statistician} \bibinfo{volume}{61},
	\bibinfo{pages}{132--138}.
	\bibitem[{Capotorti and Vantaggi(2002)}]{Capotorti02}
	\bibinfo{author}{Capotorti, A.}, \bibinfo{author}{Vantaggi, B.},
	\bibinfo{year}{2002}.
	\newblock \bibinfo{title}{Locally strong coherence in inference processes}.
	\newblock \bibinfo{journal}{Annals of Mathematics and Artificial Intelligence}
	\bibinfo{volume}{35}, \bibinfo{pages}{125--149}.
	\newblock \DOIprefix\doi{10.1023/A:1014531116863}.
	\bibitem[{Chater and Oaksford(1999)}]{chater99}
	\bibinfo{author}{Chater, N.}, \bibinfo{author}{Oaksford, M.},
	\bibinfo{year}{1999}.
	\newblock \bibinfo{title}{The probability heuristics model of syllogistic
		reasoning}.
	\newblock \bibinfo{journal}{Cognitive Psychology} \bibinfo{volume}{38},
	\bibinfo{pages}{191--258}.
	\bibitem[{Cohen(1999)}]{cohen99}
	\bibinfo{author}{Cohen, A.}, \bibinfo{year}{1999}.
	\newblock \bibinfo{title}{Generics, frequency adverbs, and probability}.
	\newblock \bibinfo{journal}{Linguistics and Philosophy} \bibinfo{volume}{22},
	\bibinfo{pages}{221--253}.
	\bibitem[{Coletti and Scozzafava(1996)}]{CoSc96}
	\bibinfo{author}{Coletti, G.}, \bibinfo{author}{Scozzafava, R.},
	\bibinfo{year}{1996}.
	\newblock \bibinfo{title}{Characterization of coherent conditional
		probabilities as a tool for their assessment and extension}.
	\newblock \bibinfo{journal}{International Journal of Uncertainty, Fuzziness and
		Knowledge-Based Systems} \bibinfo{volume}{04}, \bibinfo{pages}{103--127}.
	\bibitem[{Coletti and Scozzafava(2002)}]{coletti02}
	\bibinfo{author}{Coletti, G.}, \bibinfo{author}{Scozzafava, R.},
	\bibinfo{year}{2002}.
	\newblock \bibinfo{title}{Probabilistic logic in a coherent setting}.
	\newblock \bibinfo{publisher}{Kluwer}, \bibinfo{address}{Dordrecht}.
	\bibitem[{Coletti et~al.(2015)Coletti, Scozzafava and Vantaggi}]{CoSV15}
	\bibinfo{author}{Coletti, G.}, \bibinfo{author}{Scozzafava, R.},
	\bibinfo{author}{Vantaggi, B.}, \bibinfo{year}{2015}.
	\newblock \bibinfo{title}{Possibilistic and probabilistic logic under
		coherence: Default reasoning and {S}ystem {P}}.
	\newblock \bibinfo{journal}{Math. Slovaca} \bibinfo{volume}{65},
	\bibinfo{pages}{863--890}.
	\bibitem[{Cs\'{a}sz\'{a}r(1955)}]{Csaszar55}
	\bibinfo{author}{Cs\'{a}sz\'{a}r, A.}, \bibinfo{year}{1955}.
	\newblock \bibinfo{title}{{Sur la structure des espaces de probabilit{\'{e}}
			conditionnelle}}.
	\newblock \bibinfo{journal}{Acta Mathematica Academiae Scientiarum Hungarica}
	\bibinfo{volume}{6}, \bibinfo{pages}{337--361}.
	\newblock \URLprefix \url{http://dx.doi.org/10.1007/BF02024394},
	\DOIprefix\doi{10.1007/BF02024394}.
	\bibitem[{{de~Finetti}(1931)}]{definetti31}
	\bibinfo{author}{{de~Finetti}, B.}, \bibinfo{year}{1931}.
	\newblock \bibinfo{title}{Sul significato soggettivo della probabilit\'a}.
	\newblock \bibinfo{journal}{Fundamenta Mathematicae} \bibinfo{volume}{17},
	\bibinfo{pages}{298--329}.
	\bibitem[{{de~Finetti}(1962)}]{deFi62}
	\bibinfo{author}{{de~Finetti}, B.}, \bibinfo{year}{1962}.
	\newblock \bibinfo{title}{{Does it make sense to speak of `good probability
			appraisers'?}}, in: \bibinfo{editor}{Good, I.J.} (Ed.),
	\bibinfo{booktitle}{The scientist speculates: an anthology of partly-baked
		ideas}. \bibinfo{publisher}{Heinemann, London}, pp.
	\bibinfo{pages}{357--364}.
	\bibitem[{{de~Finetti}(1964)}]{deFi64}
	\bibinfo{author}{{de~Finetti}, B.}, \bibinfo{year}{1964}.
	\newblock \bibinfo{title}{{Probabilit\`{a} composte e teoria delle decisioni}}.
	\newblock \bibinfo{journal}{Rendiconti di Matematica} \bibinfo{volume}{23},
	\bibinfo{pages}{128--134}.
	\bibitem[{{de~Finetti}(1970/1974)}]{definetti74}
	\bibinfo{author}{{de~Finetti}, B.}, \bibinfo{year}{1970/1974}.
	\newblock \bibinfo{title}{Theory of probability}. volume \bibinfo{volume}{1,
		2}.
	\newblock \bibinfo{publisher}{John Wiley \& Sons},
	\bibinfo{address}{Chichester}.
	\bibitem[{{De Morgan}(1847)}]{demorgan47}
	\bibinfo{author}{{De Morgan}, A.}, \bibinfo{year}{1847}.
	\newblock \bibinfo{title}{Formal Logic: or, {T}he Calculus of Inference,
		Necessary and Probable}.
	\newblock \bibinfo{publisher}{Taylor and Walton}, \bibinfo{address}{London}.
	\newblock \bibinfo{note}{Reprinted 2002 by Eliborn Classics series}.
	\bibitem[{Dubins(1975)}]{Dubins75}
	\bibinfo{author}{Dubins, L.E.}, \bibinfo{year}{1975}.
	\newblock \bibinfo{title}{Finitely additive conditional probabilities,
		conglomerability and disintegrations}.
	\newblock \bibinfo{journal}{The Annals of Probability} \bibinfo{volume}{3},
	\bibinfo{pages}{89--99}.
	\bibitem[{Dubois et~al.(1993)Dubois, Godo, L{\'o}pez De~M{\`a}ntaras and
		Prade}]{dubois93}
	\bibinfo{author}{Dubois, D.}, \bibinfo{author}{Godo, L.},
	\bibinfo{author}{L{\'o}pez De~M{\`a}ntaras, R.}, \bibinfo{author}{Prade, H.},
	\bibinfo{year}{1993}.
	\newblock \bibinfo{title}{Qualitative reasoning with imprecise probabilities}.
	\newblock \bibinfo{journal}{J Intell Inf Syst} \bibinfo{volume}{2},
	\bibinfo{pages}{319--363}.
	\bibitem[{Easwaran(2019)}]{easwaran19}
	\bibinfo{author}{Easwaran, K.}, \bibinfo{year}{2019}.
	\newblock \bibinfo{title}{Conditional probabilities}, in:
	\bibinfo{editor}{Pettigrew, R.}, \bibinfo{editor}{Weisberg, J.} (Eds.),
	\bibinfo{booktitle}{The Open Handbook of Formal Epistemology}.
	\bibinfo{publisher}{PhilPapers Foundation}, pp. \bibinfo{pages}{131--198}.
	\bibitem[{Ebert(2015)}]{ebert15}
	\bibinfo{author}{Ebert, T.}, \bibinfo{year}{2015}.
	\newblock \bibinfo{title}{What is a perfect syllogism in {A}ristotelian
		syllogistic?}
	\newblock \bibinfo{journal}{Ancient Philosophy} \bibinfo{volume}{35},
	\bibinfo{pages}{351--374}.
	\bibitem[{Freund et~al.(1991)Freund, Lehmann and Morris}]{freund1991}
	\bibinfo{author}{Freund, M.}, \bibinfo{author}{Lehmann, D.},
	\bibinfo{author}{Morris, P.}, \bibinfo{year}{1991}.
	\newblock \bibinfo{title}{Rationality, transitivity, and contraposition}.
	\newblock \bibinfo{journal}{Artificial Intelligence} \bibinfo{volume}{52},
	\bibinfo{pages}{191--203}.
	\bibitem[{Gale(1960)}]{Gale60}
	\bibinfo{author}{Gale, D.}, \bibinfo{year}{1960}.
	\newblock \bibinfo{title}{The theory of linear economic models}.
	\newblock \bibinfo{publisher}{McGraw-Hill}, \bibinfo{address}{New York}.
	\bibitem[{Gilio(1989)}]{gili89}
	\bibinfo{author}{Gilio, A.}, \bibinfo{year}{1989}.
	\newblock \bibinfo{title}{Classi quasi additive di eventi e coerenza di
		probabilit{\`a} condizionate}.
	\newblock \bibinfo{journal}{Rendiconti dell'Istituto di Matematica
		dell'Universit{\`a} di Trieste} \bibinfo{volume}{XXI},
	\bibinfo{pages}{22--38}.
	\bibitem[{Gilio(1990)}]{Gili90}
	\bibinfo{author}{Gilio, A.}, \bibinfo{year}{1990}.
	\newblock \bibinfo{title}{Criterio di penalizzazione e condizioni di coerenza
		nella valutazione soggettiva della probabilit\`a}.
	\newblock \bibinfo{journal}{Boll. Un. Mat. Ital.} \bibinfo{volume}{4-B},
	\bibinfo{pages}{645--660}.
	\bibitem[{Gilio(1992)}]{Gili92}
	\bibinfo{author}{Gilio, A.}, \bibinfo{year}{1992}.
	\newblock \bibinfo{title}{{$C_0$}-{C}oherence and {E}xtension of {C}onditional
		{P}robabilities}, in: \bibinfo{editor}{Bernardo, J.M.},
	\bibinfo{editor}{Berger, J.O.}, \bibinfo{editor}{Dawid, A.P.},
	\bibinfo{editor}{Smith, A.F.M.} (Eds.), \bibinfo{booktitle}{Bayesian
		Statistics 4}. \bibinfo{publisher}{Oxford University Press}, pp.
	\bibinfo{pages}{633--640}.
	\bibitem[{Gilio(1993)}]{Gili93}
	\bibinfo{author}{Gilio, A.}, \bibinfo{year}{1993}.
	\newblock \bibinfo{title}{Probabilistic consistency of knowledge bases in
		inference systems}, in: \bibinfo{editor}{Clarke, M.}, \bibinfo{editor}{Kruse,
		R.}, \bibinfo{editor}{Moral, S.} (Eds.), \bibinfo{booktitle}{Symbolic and
		Quantitative Approaches to Reasoning and Uncertainty. {ECSQARU} 1993}.
	\bibinfo{publisher}{Springer}, \bibinfo{address}{Berlin, Heidelberg}. volume
	\bibinfo{volume}{747} of \textit{\bibinfo{series}{Lecture Notes in Computer
			Science}}, pp. \bibinfo{pages}{160--167}.
	\bibitem[{Gilio(1995)}]{Gili95a}
	\bibinfo{author}{Gilio, A.}, \bibinfo{year}{1995}.
	\newblock \bibinfo{title}{Algorithms for precise and imprecise conditional
		probability assessments}.
	\newblock \bibinfo{journal}{Mathematical Models for Handling Partial Knowledge
		in Artificial Intelligence} , \bibinfo{pages}{231--254}.
	\bibitem[{Gilio(1996)}]{Gili96}
	\bibinfo{author}{Gilio, A.}, \bibinfo{year}{1996}.
	\newblock \bibinfo{title}{{Algorithms for conditional probability
			assessments}}, in: \bibinfo{editor}{Berry, D.A.}, \bibinfo{editor}{Chaloner,
		K.M.}, \bibinfo{editor}{Geweke, J.K.} (Eds.), \bibinfo{booktitle}{Bayesian
		Analysis in Statistics and Econometrics: Essays in Honor of Arnold Zellner}.
	\bibinfo{publisher}{John Wiley}, \bibinfo{address}{NY}, pp.
	\bibinfo{pages}{29--39}.
	\bibitem[{Gilio(2004)}]{Gili04}
	\bibinfo{author}{Gilio, A.}, \bibinfo{year}{2004}.
	\newblock \bibinfo{title}{On {Cs\'asz\'ar's Condition in Nonmonotonic
			Reasoning}}, in: \bibinfo{booktitle}{$10$th International Workshop on
		Non-Monotonic Reasoning. Special Session: Uncertainty Frameworks in
		Non-Monotonic Reasoning}, \bibinfo{publisher}{Whistler BC, Canada},
	\bibinfo{address}{June 6--8}.
	\newblock
	\bibinfo{note}{\url{http://events.pims.math.ca/science/2004/NMR/uf.html}}.
	\bibitem[{Gilio and Ingrassia(1998)}]{gilio98}
	\bibinfo{author}{Gilio, A.}, \bibinfo{author}{Ingrassia, S.},
	\bibinfo{year}{1998}.
	\newblock \bibinfo{title}{Totally coherent set-valued probability assessments}.
	\newblock \bibinfo{journal}{Kybernetika} \bibinfo{volume}{34},
	\bibinfo{pages}{3--15}.
	\bibitem[{Gilio et~al.(2016)Gilio, Pfeifer and Sanfilippo}]{gilio16}
	\bibinfo{author}{Gilio, A.}, \bibinfo{author}{Pfeifer, N.},
	\bibinfo{author}{Sanfilippo, G.}, \bibinfo{year}{2016}.
	\newblock \bibinfo{title}{Transitivity in coherence-based probability logic.}
	\newblock \bibinfo{journal}{Journal of Applied Logic} \bibinfo{volume}{14},
	\bibinfo{pages}{46--64}.
	\newblock \DOIprefix\doi{http://dx.doi.org/10.1016/j.jal.2015.09.012}.
	\bibitem[{Gilio and Sanfilippo(2011)}]{GSisipta11}
	\bibinfo{author}{Gilio, A.}, \bibinfo{author}{Sanfilippo, G.},
	\bibinfo{year}{2011}.
	\newblock \bibinfo{title}{Coherent conditional probabilities and proper scoring
		rules}, in: \bibinfo{booktitle}{Proceedings of the Seventh International
		Symposium on Imprecise Probability: Theories and Applications},
	\bibinfo{publisher}{SIPTA}, \bibinfo{address}{Innsbruck}. pp.
	\bibinfo{pages}{189--198}.
	\bibitem[{Gilio and Sanfilippo(2013a)}]{gilio13}
	\bibinfo{author}{Gilio, A.}, \bibinfo{author}{Sanfilippo, G.},
	\bibinfo{year}{2013}a.
	\newblock \bibinfo{title}{Probabilistic entailment in the setting of coherence:
		The role of quasi conjunction and inclusion relation}.
	\newblock \bibinfo{journal}{Int. J. Approximate Reasoning}
	\bibinfo{volume}{54}, \bibinfo{pages}{513--525}.
	\bibitem[{Gilio and Sanfilippo(2013b)}]{gilio13ins}
	\bibinfo{author}{Gilio, A.}, \bibinfo{author}{Sanfilippo, G.},
	\bibinfo{year}{2013}b.
	\newblock \bibinfo{title}{Quasi conjunction, quasi disjunction, t-norms and
		t-conorms: probabilistic aspects}.
	\newblock \bibinfo{journal}{Information Sciences} \bibinfo{volume}{245},
	\bibinfo{pages}{146--167}.
	\bibitem[{Gilio and Sanfilippo(2014)}]{GiSa14}
	\bibinfo{author}{Gilio, A.}, \bibinfo{author}{Sanfilippo, G.},
	\bibinfo{year}{2014}.
	\newblock \bibinfo{title}{Conditional random quantities and compounds of
		conditionals}.
	\newblock \bibinfo{journal}{Studia Logica} \bibinfo{volume}{102},
	\bibinfo{pages}{709--729}.
	\bibitem[{Gilio and Sanfilippo(2019)}]{GiSa19}
	\bibinfo{author}{Gilio, A.}, \bibinfo{author}{Sanfilippo, G.},
	\bibinfo{year}{2019}.
	\newblock \bibinfo{title}{Generalized logical operations among conditional
		events}.
	\newblock \bibinfo{journal}{Applied Intelligence} \bibinfo{volume}{49},
	\bibinfo{pages}{79--102}.
	\newblock \URLprefix \url{https://doi.org/10.1007/s10489-018-1229-8}.
	\bibitem[{Gilio and Sanfilippo(2022)}]{GiSa22}
	\bibinfo{author}{Gilio, A.}, \bibinfo{author}{Sanfilippo, G.},
	\bibinfo{year}{2022}.
	\newblock \bibinfo{title}{Subjective probability, trivalent logics and compound
		conditionals}.
	\newblock \bibinfo{howpublished}{Under revision. Preprint:
		\url{http://arxiv.org/abs/2301.09327}}.
	\bibitem[{Gilio and Spezzaferri(1992)}]{GiSp92}
	\bibinfo{author}{Gilio, A.}, \bibinfo{author}{Spezzaferri, F.},
	\bibinfo{year}{1992}.
	\newblock \bibinfo{title}{Knowledge integration for conditional probability
		assessments}, in: \bibinfo{editor}{Dubois, D.}, \bibinfo{editor}{Wellman,
		M.P.}, \bibinfo{editor}{D'Ambrosio, B.}, \bibinfo{editor}{Smets, P.} (Eds.),
	\bibinfo{booktitle}{Uncertainty in Artificial Intelligence}.
	\bibinfo{publisher}{Morgan Kaufmann Publishers}, pp.
	\bibinfo{pages}{98--103}.
	\bibitem[{Hailperin(1996)}]{hailperin96}
	\bibinfo{author}{Hailperin, T.}, \bibinfo{year}{1996}.
	\newblock \bibinfo{title}{Sentential probability logic. {O}rigins, development,
		current status, and technical applications}.
	\newblock \bibinfo{publisher}{Lehigh University Press},
	\bibinfo{address}{Bethlehem}.
	\bibitem[{Hawthorne(1996)}]{Hawthorne96}
	\bibinfo{author}{Hawthorne, J.}, \bibinfo{year}{1996}.
	\newblock \bibinfo{title}{{On the logic of nonmonotonic conditionals and
			conditional probabilities}}.
	\newblock \bibinfo{journal}{Journal of Philosophical Logic}
	\bibinfo{volume}{25}, \bibinfo{pages}{185--218}.
	\newblock \DOIprefix\doi{10.1007/BF00247003}.
	\bibitem[{Hawthorne and Makinson(2007)}]{hawthorne07}
	\bibinfo{author}{Hawthorne, J.}, \bibinfo{author}{Makinson, D.},
	\bibinfo{year}{2007}.
	\newblock \bibinfo{title}{The quantitative/qualitative watershed for rules of
		uncertain inference}.
	\newblock \bibinfo{journal}{Studia Logica} \bibinfo{volume}{86},
	\bibinfo{pages}{247--297}.
	\bibitem[{Holzer(1985)}]{Holz85}
	\bibinfo{author}{Holzer, S.}, \bibinfo{year}{1985}.
	\newblock \bibinfo{title}{On coherence and conditional prevision}.
	\newblock \bibinfo{journal}{Bollettino dell'Unione Matematica Italiana}
	\bibinfo{volume}{4}, \bibinfo{pages}{441--460}.
	\bibitem[{Joyce(1998)}]{Joyce98}
	\bibinfo{author}{Joyce, J.M.}, \bibinfo{year}{1998}.
	\newblock \bibinfo{title}{A nonpragmatic vindication of probabilism}.
	\newblock \bibinfo{journal}{Philosophy of Science} \bibinfo{volume}{65},
	\bibinfo{pages}{575--603}.
	\newblock \URLprefix \url{http://www.jstor.org/stable/188574}.
	\bibitem[{Khemlani and Johnson-Laird(2012)}]{khemlani12}
	\bibinfo{author}{Khemlani, S.}, \bibinfo{author}{Johnson-Laird, P.N.},
	\bibinfo{year}{2012}.
	\newblock \bibinfo{title}{Theories of the syllogisms: {A} meta-analysis}.
	\newblock \bibinfo{journal}{Psychological Bulletin} \bibinfo{volume}{138},
	\bibinfo{pages}{427--457}.
	\bibitem[{Kneale and Kneale(1984)}]{kneale84}
	\bibinfo{author}{Kneale, W.}, \bibinfo{author}{Kneale, M.},
	\bibinfo{year}{1984}.
	\newblock \bibinfo{title}{The development of logic}.
	\newblock \bibinfo{publisher}{Clarendon Press}, \bibinfo{address}{Oxford}.
	\bibitem[{Kraus et~al.(1990)Kraus, Lehmann and Magidor}]{kraus90}
	\bibinfo{author}{Kraus, S.}, \bibinfo{author}{Lehmann, D.},
	\bibinfo{author}{Magidor, M.}, \bibinfo{year}{1990}.
	\newblock \bibinfo{title}{Nonmonotonic reasoning, preferential models and
		cumulative logics}.
	\newblock \bibinfo{journal}{Artificial Intelligence} \bibinfo{volume}{44},
	\bibinfo{pages}{167--207}.
	\bibitem[{Lad(1996)}]{lad96}
	\bibinfo{author}{Lad, F.}, \bibinfo{year}{1996}.
	\newblock \bibinfo{title}{Operational subjective statistical methods: A
		mathematical, philosophical, and historical introduction}.
	\newblock \bibinfo{publisher}{Wiley}, \bibinfo{address}{New York}.
	\bibitem[{Lambert(1764)}]{lambert1764}
	\bibinfo{author}{Lambert, J.H.}, \bibinfo{year}{1764}.
	\newblock \bibinfo{title}{Neues {O}rganon oder {G}edanken \"uber die
		{E}rforschung und {B}ezeichnung des {W}ahren und dessen {U}nterscheidung vom
		{I}rrthum und {S}chein}.
	\newblock \bibinfo{publisher}{Wendler}, \bibinfo{address}{Leipzig}.
	\bibitem[{Lehmann and Magidor(1992)}]{Lehmann92}
	\bibinfo{author}{Lehmann, D.}, \bibinfo{author}{Magidor, M.},
	\bibinfo{year}{1992}.
	\newblock \bibinfo{title}{What does a conditional knowledge base entail?}
	\newblock \bibinfo{journal}{Artificial Intelligence} \bibinfo{volume}{55},
	\bibinfo{pages}{1--60}.
	\newblock \DOIprefix\doi{https://doi.org/10.1016/0004-3702(92)90041-U}.
	\bibitem[{{\L}ukasiewicz(1957)}]{lukasiewicz57}
	\bibinfo{author}{{\L}ukasiewicz, J.}, \bibinfo{year}{1957}.
	\newblock \bibinfo{title}{Aristotle's syllogistic from the standpoint of modern
		formal logic}.
	\newblock \bibinfo{edition}{2} ed., \bibinfo{publisher}{Oxford University
		Press}, \bibinfo{address}{Oxford}.
	\bibitem[{Murinov{\'a} and Nov{\'a}k(2016)}]{Murinov2016IntermediateSI}
	\bibinfo{author}{Murinov{\'a}, P.}, \bibinfo{author}{Nov{\'a}k, V.},
	\bibinfo{year}{2016}.
	\newblock \bibinfo{title}{Intermediate syllogisms in fuzzy natural logic}.
	\newblock \bibinfo{journal}{Journal of Fuzzy Set Valued Analysis}
	\bibinfo{volume}{2016}, \bibinfo{pages}{99--111}.
	\bibitem[{Pelessoni and Vicig(2009)}]{Pelessoni09}
	\bibinfo{author}{Pelessoni, R.}, \bibinfo{author}{Vicig, P.},
	\bibinfo{year}{2009}.
	\newblock \bibinfo{title}{Williams coherence and beyond}.
	\newblock \bibinfo{journal}{International Journal of Approximate Reasoning}
	\bibinfo{volume}{50}, \bibinfo{pages}{612--626}.
	\newblock \DOIprefix\doi{https://doi.org/10.1016/j.ijar.2008.10.002}.
	\bibitem[{Peters and Westerst{\aa}hl(2006)}]{peters06}
	\bibinfo{author}{Peters, S.}, \bibinfo{author}{Westerst{\aa}hl, D.},
	\bibinfo{year}{2006}.
	\newblock \bibinfo{title}{Quantifiers in language and logic}.
	\newblock \bibinfo{publisher}{Oxford University Press},
	\bibinfo{address}{Oxford}.
	\bibitem[{Peterson(2000)}]{peterson00}
	\bibinfo{author}{Peterson, P.L.}, \bibinfo{year}{2000}.
	\newblock \bibinfo{title}{Intermediate quantifiers. {L}ogic, linguistics, and
		{A}ristotelian semantics}.
	\newblock \bibinfo{publisher}{Ashgate Publishing Company},
	\bibinfo{address}{Aldershot}.
	\bibitem[{Pettigrew(2016)}]{pettigrew16}
	\bibinfo{author}{Pettigrew, R.}, \bibinfo{year}{2016}.
	\newblock \bibinfo{title}{Accuracy and the laws of credence}.
	\newblock \bibinfo{publisher}{Oxford University Press},
	\bibinfo{address}{Oxford}.
	\bibitem[{Petturiti and Vantaggi(2017)}]{PeVa17}
	\bibinfo{author}{Petturiti, D.}, \bibinfo{author}{Vantaggi, B.},
	\bibinfo{year}{2017}.
	\newblock \bibinfo{title}{Envelopes of conditional probabilities extending a
		strategy and a prior probability}.
	\newblock \bibinfo{journal}{Int. J. Approximate Reasoning}
	\bibinfo{volume}{81}, \bibinfo{pages}{160 -- 182}.
	\bibitem[{Pfeifer(2006)}]{pfeifer06}
	\bibinfo{author}{Pfeifer, N.}, \bibinfo{year}{2006}.
	\newblock \bibinfo{title}{Contemporary syllogistics: comparative and
		quantitative syllogisms}, in: \bibinfo{editor}{Kreuzbauer, G.},
	\bibinfo{editor}{Dorn, G.J.W.} (Eds.), \bibinfo{booktitle}{Argumentation in
		{T}heorie und {P}raxis: {P}hilosophie und {D}idaktik des {A}rgumentierens}.
	\bibinfo{publisher}{{Lit} Verlag}, \bibinfo{address}{Wien}, pp.
	\bibinfo{pages}{57--71}.
	\bibitem[{Pfeifer and Kleiter(2005)}]{pfeifer05a}
	\bibinfo{author}{Pfeifer, N.}, \bibinfo{author}{Kleiter, G.D.},
	\bibinfo{year}{2005}.
	\newblock \bibinfo{title}{Towards a mental probability logic}.
	\newblock \bibinfo{journal}{Psychologica Belgica} \bibinfo{volume}{45},
	\bibinfo{pages}{71--99}.
	\bibitem[{Pfeifer and Sanfilippo(2017a)}]{PS17SH}
	\bibinfo{author}{Pfeifer, N.}, \bibinfo{author}{Sanfilippo, G.},
	\bibinfo{year}{2017}a.
	\newblock \bibinfo{title}{Probabilistic squares and hexagons of opposition
		under coherence}.
	\newblock \bibinfo{journal}{Int. J. Approximate Reasoning}
	\bibinfo{volume}{88}, \bibinfo{pages}{282--294}.
	\bibitem[{Pfeifer and Sanfilippo(2017b)}]{2016:SMPS2}
	\bibinfo{author}{Pfeifer, N.}, \bibinfo{author}{Sanfilippo, G.},
	\bibinfo{year}{2017}b.
	\newblock \bibinfo{title}{Square of opposition under coherence}, in:
	\bibinfo{booktitle}{Soft Methods for Data Science}.
	\bibinfo{publisher}{Springer}, \bibinfo{address}{Berlin}. AISC, vol. 456, pp.
	\bibinfo{pages}{407--414}.
	\bibitem[{Pfeifer and Sanfilippo(working paper)}]{PfSaFIGIV}
	\bibinfo{author}{Pfeifer, N.}, \bibinfo{author}{Sanfilippo, G.},
	\bibinfo{year}{working paper}.
	\newblock \bibinfo{title}{Probabilistic semantics of categorical syllogisms of
		{F}igure {IV}}.
	\bibitem[{Popper(1959)}]{popper59}
	\bibinfo{author}{Popper, K.}, \bibinfo{year}{1959}.
	\newblock \bibinfo{title}{The Logic of Scientific Discovery}.
	\newblock \bibinfo{publisher}{Hutchinson \& Co.}, \bibinfo{address}{London}.
	\bibitem[{Predd et~al.(2009)Predd, Seiringer, Lieb, Osherson, Poor and
		Kulkarni}]{predd09}
	\bibinfo{author}{Predd, J.B.}, \bibinfo{author}{Seiringer, R.},
	\bibinfo{author}{Lieb, E.H.}, \bibinfo{author}{Osherson, D.N.},
	\bibinfo{author}{Poor, H.V.}, \bibinfo{author}{Kulkarni, S.R.},
	\bibinfo{year}{2009}.
	\newblock \bibinfo{title}{Probabilistic coherence and proper scoring rules}.
	\newblock \bibinfo{journal}{IEEE Transactions on Information Theory}
	\bibinfo{volume}{55}, \bibinfo{pages}{4786--4792}.
	\newblock \DOIprefix\doi{10.1109/TIT.2009.2027573}.
	\bibitem[{Read(2015)}]{read15}
	\bibinfo{author}{Read, S.}, \bibinfo{year}{2015}.
	\newblock \bibinfo{title}{Aristotle and {{\L}}ukasiewicz on existential
		import}.
	\newblock \bibinfo{journal}{Journal of the American Philosophical Association}
	\bibinfo{volume}{1}, \bibinfo{pages}{535--544}.
	\bibitem[{Regazzini(1985)}]{Rega85}
	\bibinfo{author}{Regazzini, E.}, \bibinfo{year}{1985}.
	\newblock \bibinfo{title}{Finitely additive conditional probabilities}.
	\newblock \bibinfo{journal}{Rendiconti del Seminario Matematico e Fisico di
		Milano} \bibinfo{volume}{55}, \bibinfo{pages}{69--89}.
	\bibitem[{R\'{e}nyi(1955)}]{renyi55}
	\bibinfo{author}{R\'{e}nyi, A.}, \bibinfo{year}{1955}.
	\newblock \bibinfo{title}{On a new axiomatic theory of probability}.
	\newblock \bibinfo{journal}{Acta Mathematica Academiae Scientiarum Hungaricae}
	\bibinfo{volume}{6}, \bibinfo{pages}{285--335}.
	\bibitem[{R\'{e}nyi(1956)}]{Renyi56}
	\bibinfo{author}{R\'{e}nyi, A.}, \bibinfo{year}{1956}.
	\newblock \bibinfo{title}{On conditional probability spaces generated by a
		dimensionally ordered set of measures}.
	\newblock \bibinfo{journal}{Theory of Probability \& Its Applications}
	\bibinfo{volume}{1}, \bibinfo{pages}{55--64}.
	\newblock \DOIprefix\doi{10.1137/1101005}.
	\bibitem[{Rigo(1988)}]{Rigo88}
	\bibinfo{author}{Rigo, P.}, \bibinfo{year}{1988}.
	\newblock \bibinfo{title}{Un teorema di estensione per probabilit{\`a}
		condizionate finitamente additive}.
	\newblock \bibinfo{journal}{Atti della XXXIV Riunione Scientifica S.I.S.} ,
	\bibinfo{pages}{27--34}.
	\bibitem[{Sanfilippo(2012)}]{Sanf12}
	\bibinfo{author}{Sanfilippo, G.}, \bibinfo{year}{2012}.
	\newblock \bibinfo{title}{{From imprecise probability assessments to
			conditional probabilities with quasi additive classes of conditioning
			events}}, in: \bibinfo{booktitle}{Proc. of the Twenty-Eighth Conference on
		Uncertainty in Artificial Intelligence (UAI-12)}, \bibinfo{publisher}{AUAI
		Press}, \bibinfo{address}{Corvallis, Oregon}. pp. \bibinfo{pages}{736--745}.
	\bibitem[{Sanfilippo et~al.(2020)Sanfilippo, Gilio, Over and Pfeifer}]{SGOP20}
	\bibinfo{author}{Sanfilippo, G.}, \bibinfo{author}{Gilio, A.},
	\bibinfo{author}{Over, D.E.}, \bibinfo{author}{Pfeifer, N.},
	\bibinfo{year}{2020}.
	\newblock \bibinfo{title}{Probabilities of conditionals and previsions of
		iterated conditionals}.
	\newblock \bibinfo{journal}{International Journal of Approximate Reasoning}
	\bibinfo{volume}{121}, \bibinfo{pages}{150--173}.
	\newblock \DOIprefix\doi{https://doi.org/10.1016/j.ijar.2020.03.001}.
	\bibitem[{Sanfilippo et~al.(2017)Sanfilippo, Pfeifer and Gilio}]{ECSQARU17}
	\bibinfo{author}{Sanfilippo, G.}, \bibinfo{author}{Pfeifer, N.},
	\bibinfo{author}{Gilio, A.}, \bibinfo{year}{2017}.
	\newblock \bibinfo{title}{Generalized probabilistic modus ponens}, in:
	\bibinfo{booktitle}{ECSQARU 2017}. \bibinfo{publisher}{Springer}. volume
	\bibinfo{volume}{10369} of \textit{\bibinfo{series}{LNAI/LNCS}}, pp.
	\bibinfo{pages}{480--490}.
	\bibitem[{Sanfilippo et~al.(2018)Sanfilippo, Pfeifer, Over and Gilio}]{SPOG18}
	\bibinfo{author}{Sanfilippo, G.}, \bibinfo{author}{Pfeifer, N.},
	\bibinfo{author}{Over, D.E.}, \bibinfo{author}{Gilio, A.},
	\bibinfo{year}{2018}.
	\newblock \bibinfo{title}{Probabilistic inferences from conjoined to iterated
		conditionals}.
	\newblock \bibinfo{journal}{Int. J. Approximate Reasoning}
	\bibinfo{volume}{93}, \bibinfo{pages}{103 -- 118}.
	\bibitem[{Smith(2022)}]{sep-aristotle-logic}
	\bibinfo{author}{Smith, R.}, \bibinfo{year}{2022}.
	\newblock \bibinfo{title}{{Aristotle’s logic}}, in: \bibinfo{editor}{Zalta,
		E.N.}, \bibinfo{editor}{Nodelman, U.} (Eds.), \bibinfo{booktitle}{The
		{Stanford} Encyclopedia of Philosophy}. \bibinfo{edition}{{W}inter 2022} ed..
	\bibinfo{publisher}{Metaphysics Research Lab, Stanford University}.
	\newblock \URLprefix \url{https://plato.stanford.edu/archives/win2022/entries/
		aristotle-logic/}.
	\bibitem[{St\"orring(1908)}]{storring08}
	\bibinfo{author}{St\"orring, G.}, \bibinfo{year}{1908}.
	\newblock \bibinfo{title}{Experimentelle {U}ntersuchungen zu einfachen
		{S}chlu\ss prozessen}.
	\newblock \bibinfo{journal}{Archiv f\"ur die Gesamte {P}sychologie}
	\bibinfo{volume}{11}, \bibinfo{pages}{1--127}.
	\bibitem[{Thierry(2011)}]{thierry11}
	\bibinfo{author}{Thierry, M.}, \bibinfo{year}{2011}.
	\newblock \bibinfo{title}{J.-{H.} {L}ambert's theory of probable syllogisms}.
	\newblock \bibinfo{journal}{International Journal of Approximate Reasoning}
	\bibinfo{volume}{52}, \bibinfo{pages}{144--152}.
	\bibitem[{Walley et~al.(2004)Walley, Pelessoni and Vicig}]{WaPV04}
	\bibinfo{author}{Walley, P.}, \bibinfo{author}{Pelessoni, R.},
	\bibinfo{author}{Vicig, P.}, \bibinfo{year}{2004}.
	\newblock \bibinfo{title}{Direct algorithms for checking consistency and making
		inferences from conditional probability assessments}.
	\newblock \bibinfo{journal}{Journal of Statistical Planning and Inference}
	\bibinfo{volume}{126}, \bibinfo{pages}{119--151}.
	\bibitem[{Westerst{\aa}hl(1989)}]{westerstahl89}
	\bibinfo{author}{Westerst{\aa}hl, D.}, \bibinfo{year}{1989}.
	\newblock \bibinfo{title}{Aristotelian syllogisms and generalized quantifiers}.
	\newblock \bibinfo{journal}{Studia Logica} \bibinfo{volume}{48},
	\bibinfo{pages}{577--585}.
	\bibitem[{Williams(1975)}]{williams75}
	\bibinfo{author}{Williams, P.M.}, \bibinfo{year}{1975}.
	\newblock \bibinfo{title}{Notes on conditional previsions}.
	\newblock \bibinfo{type}{Technical Report}. School of Mathematical and Physical
	Sciences, University of Sussex.
	\newblock \bibinfo{note}{Reprint in IJAR, 2007, 44(3)}.
	
\end{thebibliography}

\end{document}